\documentclass[11pt,draft, reqno]{amsart}

\allowdisplaybreaks

\usepackage[margin=16mm]{geometry}
\usepackage{amssymb, amsmath, amsthm, amscd}

\usepackage[T1]{fontenc}
\usepackage{eucal,mathrsfs,dsfont}%gives nicer set-names. Use \mathds instead of \mathds
\usepackage{color}%cyan,red;magenta,green;yellow,blue
\def\<{\langle}
\def\>{\rangle}

\def\1{{\mathbf{1}}}

\renewcommand{\geq}{\geqslant}
\renewcommand{\le}{\leqslant}
\renewcommand{\ge}{\geqslant}

\def\EE{{\mathcal E}}

\def\LL{{\mathcal L}}

\newcommand{\D}{\mathscr{D}}

\newcommand{\HH}{\mathcal{H}}

\definecolor{mno}{rgb}{0.5,0.1,0.5}

\newcommand{\R}{\mathds R}

\newcommand{\e}{\varepsilon}

\newcommand{\Pp}{\mathds P}
\newcommand{\Ee}{\mathds E}

\newcommand{\I}{\mathds 1}
\newcommand{\w}{\omega}

\newtheorem{theorem}{Theorem}[section]
\newtheorem{lemma}[theorem]{Lemma}
\newtheorem{proposition}[theorem]{Proposition}

\theoremstyle{definition}

\newtheorem{example}[theorem]{Example}
\newtheorem{remark}[theorem]{Remark}
\newtheorem{assumption}[theorem]{Assumption}

\numberwithin{equation}{section}

\begin{document}
\allowdisplaybreaks
\title[Non-local symmetric operators with divergence-free drift] {\bfseries Stochastic homogenization of diffusions in turbulence driven by
non-local symmetric L\'evy operators}
\author{Xin Chen, \quad Jian Wang\quad \hbox{and}\quad Kun Yin}

\date{}

\maketitle

\begin{abstract}
We investigate the stochastic homogenization of a class of turbulent diffusions generated by non-local symmetric L\'evy operators with divergence-free drift fields in ergodic random environments, where neither the drift fields nor their associated stream functions are assumed to be bounded. A pivotal step in our proof is the establishment of $W_{loc}^{1,q}$ estimates with $q\in (1,2)$ for the corresponding correctors, under mild prior regularity conditions imposed on the L\'evy measure and the stream function.

\medskip

\noindent\textbf{Keywords:} stochastic homogenization;  non-local symmetric L\'evy
operator; ergodic random environment; divergence-free drift; jump process.

\medskip

\noindent \textbf{MSC 2020:} 60G51; 60G52; 35B27; 82D30
\end{abstract}
\allowdisplaybreaks

\section{Introduction and main result}\label{section1}

\subsection{Background}

In this paper, we investigate the stochastic homogenization of the following random operator
\begin{equation}\label{e:operator1}
L^\w f(x):=L_0 f(x)+\langle b(x;\w),\nabla f(x)\rangle,\quad f\in C_b^2(\R^d),
\end{equation}
where $b(\cdot;\w)$ is a divergence-free random field.

When $L_0$ corresponds to the Laplacian operator, this model is closely related to a fundamental paradigm in statistical fluid mechanics, which describes the dynamics of diffusive particles convected by a random incompressible velocity field. In particular, the long-time behavior of the Markov process generated by the random operator $L^\w$ is characterized via the stochastic homogenization framework (or the invariance principle). A core component of this theory lies in the derivation of the effective diffusivity, a key parameter that determines the coefficients of the limiting Brownian motion induced by the drift field $b(x;\w)$ under appropriate regularity conditions.

Independently, Kozlov \cite{Koz} and Papanicolaou and Varadhan \cite{PV} pioneered the so-called -- seen from the particle -- method to construct the associated corrector in ergodic random environments. This technique has since emerged as a cornerstone in the research of stochastic homogenization. Under the assumption that the drift field $b(x;\w)$ admits a stream function (see Assumption \ref{a1-2} below), extensive studies have been conducted on properties of stochastic homogenization for $L^\w$. For instance, Osada \cite{O} established the quenched invariance principle under the condition of a bounded stream function. Landim, Olla and Yau \cite{LOY} proved the corresponding result for time-dependent environments,
 assuming the existence of a (time-dependent) bounded stream. Oelschl\"{a}ger \cite{Oe} relaxed the boundedness constraint and analyzed stochastic homogenization under finite $p$-moment conditions on the stream function for any $p<\infty$. With finite second moment assumptions on the stream function, Fannjiang and Papanicolaou \cite{FP} further demonstrated the $L_2$-convergence for the density of the associated Markov process. Leveraging Moser's iteration argument, Fannjiang and Komorowski \cite{FK,FK1} proved the quenched invariance principle for both static and time-dependent ergodic environments, respectively, under strengthened finite moment conditions on the stream function. Komorowski and Olla \cite{KO} proposed a criterion for stochastic homogenization in time-dependent models based on the spectral resolution of the drift field.
Readers are referred to the monograph \cite{KLO} and the references therein for details on related developments.
Fehrman \cite{Fe} explored stochastic homogenization in space-time ergodic settings, where the impact of temporal variables was completely analyzed, and the limiting process was shown to deviate from a standard Brownian motion in certain cases. Additionally, Fehrman \cite{Fe1} provided a simple proof of quenched stochastic homogenization, and also examined the large-scale H\"{o}lder regularity and  the first-order Liouville principle of the operator $L^\w$.

Recently, significant progress has been made for the critically correlated case, where the drift field $b(x;\w)$ does not possess a globally defined stream function. Cannizzaro, Haunschmid-Sibitz and Toninelli \cite{CHT} derived long-time second-moment estimates for a Brownian particle in
$\R^2$, subject to a random, time-independent drift given by the curl of the two-dimensional Gaussian Free Field. By combining stochastic homogenization techniques with refined cutoff arguments, Chatzigeorgiou, Morfe, Otto and Wang \cite{CMOW} improved these estimates to achieve the optimal order in the large-time regime. Furthermore, Armstrong, Bou-Rabee and Kuusi \cite{ABK} employed a renormalization group approach to analyze the coarse-grained diffusivity across different scales, proving both qualitative and quantitative versions of the quenched invariance principle with super-diffusive scaling. Notably, the limiting Brownian motion in their work is fully determined by the drift field. For additional properties of models with critically correlated divergence-free drift fields, we refer the reader to \cite{ABK1,MOW,MOW2,MOW1,OW} and the references therein.

In contrast, results for the case where $L_0$ is a non-local operator remain relatively scarce. For symmetric stable-like operators $L_0$, Chen and Yin \cite{CY} studied the stochastic homogenization of $L^\w$, showing that the limiting process is an $\alpha$-stable L\'evy process with no effective diffusivity term. A feature of this work is that the proof does not require the construction of a corrector. To the best of our knowledge, it remains an unknown question whether the limiting process in the stochastic homogenization of non-local operators $L_0$ can be a Brownian motion with a non-trivial effective diffusivity. In this paper, we focus on a class of non-local L\'evy operators $L_0$ whose L\'evy measures are $L_2$-integrable (see Assumption \ref{a1-2--} for details). In fact, for Markov processes generated by symmetric non-local operators with $L_2$-integrable jumping kernels, stochastic homogenization has been established by Biskup, Chen, Kumagai and Wang \cite{BCKW}, Flegel, Heida and Slowik \cite{FHS}, and Piatnitski and Zhizhina \cite{PZ} via distinct approaches; in all these cases, the limiting process is a Brownian motion with effective diffusivity. As discussed in Remark \ref{r1-2} below, a primary challenge in our work is to establish $W^{1,q}_{loc}$-estimates for the associated corrector, given the weaker regularity of both the non-local L\'evy operator $L_0$ and the drift field $b(x;\w)$. For recent advances in the stochastic homogenization of non-local operators, we also refer to \cite{CKW, CCKW1, CCKW, CKK, FH, KPZ, PZ, S} and the references therein.

\subsection{Framework and main result}\label{section1.2}
 Let $d\geq 2$. Suppose that $(\Omega, \mathcal{G}, \Pp)$ is a probability space
 endowed with a measurable group of transformations $\tau_x:\Omega \to \Omega$, for all $x\in \R^d$, such that
\begin{itemize}
\item [(a)] $\tau_0={\rm id}$, where ${\rm id}$ denotes the identity map on $\Omega$;

\item [(b)] $\tau_x \circ \tau_y=\tau_{x+y}$  for every $x, y\in \R^d$.
\end{itemize}
We assume that the following stationary, ergodic and measurable properties hold:
\begin{itemize}
\item[(i)] The stationary condition: $\Pp(\tau_x A)=\Pp(A)$ for all $A\in \mathcal{G}$ and $x\in \R^d$;

\item[(ii)] The ergodic condition: if $A\in \mathcal{G}$ and $\tau_xA=A$ for all $x\in \R^d$, then $\Pp(A)\in \{0,1\}$;

\item[(iii)] The measurable condition: the function $(x,\w)\mapsto \tau_x \w$ is $\mathscr{B}(\R^d)\times \mathcal{G}$-measurable.
\end{itemize}

Throughout the paper, we suppose that
$L_0$ in \eqref{e:operator1} is a symmetric non-local (deterministic) L\'evy operator having the following expression
$$L_0f(x):={\rm p.v.}\int_{\R^d}\left(f(x+z)-f(x)\right)\nu(z)\,dz,\quad f\in C_b^2(\R^d)$$
with
\begin{align}\label{jumping}
\nu(z):=\frac{1}{|z|^{d+\alpha}}\I_{\{|z|\le 1\}}+\nu(z)\I_{\{|z|>1\}},\quad z\in \R^d,
\end{align}
and the random drift $b(x;\w)$ in \eqref{e:operator1} is defined by
\begin{equation*}
b(x;\w):=\tilde b(\tau_x \w)=(\tilde b_1(\tau_x \w), \tilde b_2(\tau_x \w),\cdots, \tilde b_d(\tau_x \w)),\quad x\in \R^d,\ \w\in \Omega
\end{equation*}
with $\tilde b:\Omega \to \R^d$.

We always make the following assumption on the operator $L_0$.

\begin{assumption}\label{a1-2--} \it The density function $\nu(z)$ of the jumping measure for the operator $L_0$ fulfills all the following conditions:
\begin{itemize}
\item [(i)] The constant $\alpha$ in \eqref{jumping} satisfies that $\alpha\in (1,2)$.

\item [(ii)] $\nu(z)=\nu(-z)$ for all $z\in \R^d$, $\sup_{z\in \R^d:|z|>1}\nu(z)<\infty$ and
$$\displaystyle\int_{\{|z|>1\}}|z|^2 \nu(z)\,dz<\infty.$$
\item [(iii)]  There exist constants $c_0\ge1$ and $K_0\ge2$, and a function $\gamma:\R^d\to \R_+:=(0,\infty)$ such that for every $r\ge 1$ and  $x\in \R^d$ with $|x|\ge K_0r$,
\begin{equation}\label{t2-1-2a}
\int_{B\left(x,r\right)}\nu^2(z)\,d z\le c_0r^d\gamma^2(x)\nu^2(x)
\end{equation}
and
\begin{equation}\label{a1-2-1}
\int_{\{|x|\ge K_0\}}|x|^2\gamma^2(x)\nu(x)\,dx<\infty.
\end{equation}
\end{itemize}
\end{assumption}
\noindent In particular, under Assumption \ref{a1-2--}(ii), $\displaystyle \int_{\R^d}|z|^2\nu(z)\,dz<\infty;$ that is, the L\'evy measure $\nu(z)\,dz$ corresponding to the operator $L_0$ has finite second moment.  Example \ref{ex1-1} below indicates that Assumption \ref{a1-2--}(iii) can
apply to a large class of symmetric non-local L\'evy operators with regular jumping measures.

Let $\HH:=L^2(\Omega;\Pp)$. Using $\{\tau_x\}_{x\in \R^d}$, we can define the translation operator $T_x: \HH\to \HH$ and the derivative operator $D_j$
(if the limit exists) respectively as follows:
\begin{align*}
T_x \tilde F(\w):=&\tilde F(\tau_{-x} \w),\quad \tilde F\in \HH,\, x\in \R^d,\\
D_j \tilde F(\w):=&\lim_{\e\downarrow 0}\frac{T_{\e e_j}\tilde F(\w)-\tilde F(\w)}{\e},\quad \tilde F\in \HH,\, 1\le j \le d,
\end{align*}
where $\{e_j\}_{1\le j \le d}$ denotes the canonical orthonormal basis of $\R^d$. In particular, it is easy to verify that, if $D_j\tilde F$ exists, then
\begin{equation}\label{e1-3}
D_j \tilde F(\tau_x \w)=-\frac{\partial F(\cdot;\w) }{\partial x_j}(x),\quad x\in \R^d,
\end{equation}
where $F(x;\w):=\tilde F(\tau_x \w)$.
Set
\begin{equation*}
\D:=\{\tilde F\in L^\infty(\Omega;\Pp): D_j \tilde F\ {\rm \, and\, }D_k(D_j \tilde F)\ {\rm exist\, and\, belong\, to}\ L^\infty(\Omega;\Pp)\ {\rm for\ every}\ 1\le j,k \le d\}.
\end{equation*}
According to the stationary property of the transformation $\{\tau_x\}_{x\in \R^d}$ and the integration by parts formula, we have
\begin{equation}\label{e1-2a}
\Ee[D_j \tilde F \,\tilde G]=-\Ee[\tilde F D_j \tilde G],\quad\tilde F,\tilde G\in \D.
\end{equation}
Since $\{T_x\}_{x\in \R^d}$ is a strongly continuous unitary semigroup on $\HH$,  we have the following spectral expression:
\begin{equation}\label{e1-1a}
T_x=\int_{\R^d} e^{i \langle \xi, x\rangle }\,U(d\xi),\quad x\in \R^d,
\end{equation}
where $U(d\xi)$ is the associated projection valued measure on $\HH$. By \eqref{e1-2a} and \eqref{e1-1a}, we can define $\HH_1:=\overline{\D}^{\|\cdot\|_{\HH_1}}$ as
the closed extension of $\D$ under the $\|\cdot\|_{\HH_1}$-norm, where
\begin{equation*}
\|\tilde F\|_{\HH_1}^2:=\Ee[\tilde F^2]+\sum_{j=1}^d \Ee[|D_j \tilde F|^2]=\int_{\R^d}\left(1+|\xi|^2\right)\Ee[U(d\xi)\tilde F\cdot \tilde F],\quad \tilde F\in \D.
\end{equation*}
Hence, $D_j \tilde F$
is well defined for every $\tilde F\in \HH_1$, and the integration by parts formula \eqref{e1-2a} still holds
for every $\tilde F,\tilde G\in \HH_1$. It is also easy to see that
$F(\cdot,\w)\in W_{loc}^{1,2}(\R^d)$ for every $\tilde F\in \HH_{1}$ and a.e.\ $\w\in \Omega$, where $F(x;\w):=\tilde F(\tau_x \w)$.
We refer the readers
to \cite[Section 2]{PV} for the details of all the contents mentioned above.

We then put the following assumption on the drift term $b(x;\w)=\tilde b(\tau_x\w)$.

\begin{assumption}\label{a1-2} \it
Let $\tilde b=(\tilde b_1,\tilde b_2,\cdots, \tilde b_d):\Omega \to \R^d$ be a random vector such that
there exists a system of stream functions $\tilde H_{jl}:\Omega \to \R$, $1\le j,l\le d$, satisfying the following properties:
\begin{itemize}
\item [(i)] $\tilde H_{jl} \in \HH_1$ for every $1\le j,l \le d$; $\tilde H_{jl}=-\tilde H_{lj}$ for every $1\le j,l \le d$;
and the function $x\mapsto D_l \tilde H_{jl}(\tau_x \w)$ is continuous for every $1\le j,l \le d$ and a.s. $\w\in \Omega$.
\item [(ii)] $\tilde b_j=\sum_{l=1}^d D_l \tilde H_{jl}$ for every $1\le j \le d$.
\end{itemize}
\end{assumption}
\noindent It follows from Assumption \ref{a1-2} that (in the distributional sense)
\begin{equation}\label{e1-5}
{\rm div}b(\cdot;\w)(x)=-\sum_{j=1}^d \frac{\partial b_j(x;\w)}{\partial x_j}=\sum_{j=1}^d D_j \tilde b_j(\tau_x \w)=
\sum_{j,l=1}^d D_jD_l \tilde H_{jl}(\tau_x\w)=0, \quad x\in \R^d,
\end{equation}
where in the last equality we used the anti-symmetry of $\{\tilde H_{jl}\}_{1\le j,l\le d}$. In particular, the random drift $b(\cdot;\w)$ is divergence-free.

Now, for any $\varepsilon>0$, we define the scaled operator $L^{\e,\w}$ by
\begin{equation*}
L^{\e,\w}f(x)=L_0^\e f(x)+\e^{-1}\left\langle b\left(\frac{x}{\e};\w\right),\nabla f(x)\right\rangle,\quad f\in C_b^2(\R^d),
\end{equation*}
where
\begin{equation*}
L_0^\e f(x):=\e^{-d-2}{\rm p.v.}\int_{\R^d}\left(f(x+z)-f(x)\right)\nu\left(\frac{z}{\e}\right)\,dz=\e^{-2}{\rm p.v.}\int_{\R^d}\left(f(x+\e z)-f(x)\right)\nu({z})\,dz.
\end{equation*}
If $(X_t^{\w})_{t\ge 0}$ is a Markov process associated with the operator $L^\w$,
then
$(X_t^{\e,\w})_{t\ge 0}:=(\e X_{\e^{-2}t}^{\w})_{t\ge 0}$ is the Markov process associated with the scaled operator
$L^{\e,\w}$. For any $\lambda>0$ and $h\in C_b^\infty(\R^d)$, consider the following resolvent equation associated with $L^{\e,\w}$:
\begin{equation}\label{e1-6}
\lambda u^{\e}(x;\w)-L^{\e,\w}u^{\e}(x;\w)=h(x),\quad x\in \R^d.
\end{equation} The existence of a weak solution to the equation \eqref{e1-6} will be established in Proposition \ref{t1-1} below.

Define a $d\times d$ matrix
$\bar A=\{\bar a_{jk}\}_{1\le j,k \le d}$ by
\begin{equation}\label{e1-7}
\bar a_{jk}=\Ee\left[\int_{\R^d}\left(z_j+\phi_j(z;\w)\right)\left(z_k+\phi_k(z;\w)\right)\nu(z)\,dz\right],\quad 1\le j,k\le d,
\end{equation} where $\phi:\R^d\times \Omega \to \R^d$ is the corrector constructed in Theorem \ref{t2-1}. As shown in Lemma \ref{l1-2} below, $\bar A$ is a 
strictly positive definite and bounded matrix, so
\begin{equation*}
\bar L f(x):=\frac{1}{2}\sum_{j,k=1}^d \bar a_{jk}\frac{\partial^2 f(x)}{\partial x_j \partial x_k},\quad f\in C_c^2(\R^d)
\end{equation*} is a uniformly elliptic
second differential operator.
For every $\lambda>0$, set
$$\mathscr{G}_\lambda=\{
h\in C_b^\infty(\R^d)\cap W^{1,2}(\R^d):
(\lambda -\bar L)^{-1} h\in C_c^\infty(\R^d)\}.$$
Then, for any $\lambda>0$ and $h\in \mathscr{G}_\lambda$, there exists a unique solution $\bar u$ in $L^2(\R^d;dx)$ to the following equation
\begin{equation}\label{t1-2-2}
\lambda \bar u(x)-\bar L \bar u(x)=h(x),\quad x\in \R^d.
\end{equation}

The main theorem of this paper is concerned on the stochastic homogenization for the scaled 
resolvent 
equation \eqref{e1-6} to \eqref{t1-2-2} as $\e \to 0.$ For this, we further need the following assumption.

\begin{assumption}\label{a1-3} \it
Suppose that $d> 4(\alpha-1)$ and that there exists $q\in (2d/(d+2),2)$ such that
the following properties hold.
\begin{itemize}
\item [(i)] There exists a constant $r_0\ge 1$ such that
\begin{equation}\label{t2-1-3a}
\Ee\left[N_{r_0}(\w)^{\frac{\max\{4,
d\}}{2(\alpha-1)(2-q)}}\right]<\infty,
\end{equation}
where $$N_{r_0}(\w):=\sup_{x\in B(0,r_0), 1\le j,l \le d}\left|D_j\tilde H_{jl}(\tau_x \w)\right|.$$

\item [(ii)] For some $r> \max\left\{\frac{q}{q-1},\frac{2p'_0}{p'_0-2}\right\}$ with $p_0=\frac{dq}{d-q}\in (2,\infty)$ and $p_0'=\frac{2p_0\alpha}{4(\alpha-1)+p_0(2-\alpha)}\in (2,p)$,
\begin{equation}\label{a1-3-1}
\Ee[|\tilde H_{jl}|^{r}]<\infty,\quad 1\le j,l\le d
\end{equation}
and
\begin{equation}\label{a1-3-2}
\Ee\left[\left|\int_{\{|z|\le 1\}}
\frac{|\tilde H_{jl}(\tau_z \w)-\tilde H_{jl}(\w)|^2}{|z|^{d+2-\alpha}}\,dz\right|^{\frac{p_0}{p_0-2}}\right]<\infty,\quad
\ 1\le j,l\le d.
\end{equation}
\end{itemize}
\end{assumption}

\begin{theorem}\label{t1-2} Suppose that Assumptions $\ref{a1-2--}$, $\ref{a1-2}$  and $\ref{a1-3}$  are satisfied.
Then, for any $\lambda>0$, $h\in \mathscr{G}_\lambda$,  a.s. $\w\in \Omega$ and every $R\ge 1$, $p\ge 1$,
\begin{equation}\label{t1-2-1}
\lim_{\e \to 0}\int_{B(0,R)}|u^{\e}(x;\w)-\bar u(x)|^p\,dx=0.
\end{equation}
Here $u^{\e}(x;\w)$ and $\bar u$ are solutions to the equations \eqref{e1-6} and \eqref{t1-2-2}, respectively.
\end{theorem}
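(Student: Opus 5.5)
The strategy is the classical corrector/two-scale expansion adapted to the non-local setting. By Theorem \ref{t2-1} we have a corrector $\phi(z;\w)$ with stationary increments. We first record its basic properties:
\begin{itemize}
\item $\phi(z;\w)=\tilde\phi(\tau_z\w)-\tilde\phi(\w)$ in the limiting sense.
\item It satisfies the corrector equation $L^\w(x_j+\phi_j)=0$ weakly (equivalently, $L_0\phi+b+\<b,\nabla\phi\>=0$).
\item It is sublinear: $\e\phi(\cdot/\e;\w)\to 0$ in $L^2_{loc}$ a.s.
\item It satisfies the $W^{1,q}_{loc}$ bound, which gives $\e\phi(\cdot/\e)\to0$ in $L^{p_0}_{loc}$ via Sobolev embedding with $p_0=dq/(d-q)$.
\end{itemize}
Using Proposition \ref{t1-1}, we get the energy estimate for $u^\e$. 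Since $b$ is divergence free, testing \eqref{e1-6} with $u^\e$ kills the drift (after justification with the stream function). This yields $\lambda\|u^\e\|_{L^2}^2+\mathcal E^\e(u^\e,u^\e)\le \|h\|_{L^2}\|u^\e\|_{L^2}$ uniformly in $\e$, together with $\|u^\e\|_\infty\le \|h\|_\infty/\lambda$ by the maximum principle. Given these uniform bounds, convergence in $L^2(B(0,R))$ implies \eqref{t1-2-1} for every $p\ge1$ by interpolation, so it suffices to treat $p=2$.

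The core step is the two-scale test function
$$v^\e(x):=\bar u(x)+\e\sum_{j}\phi_j(x/\e;\w)\partial_j\bar u(x),$$
with $\bar u\in C_c^\infty$ because $h\in\mathscr G_\lambda$. We compute $(\lambda-L^{\e,\w})v^\e-h=:r^\e$ and show that $\langle r^\e,w\rangle\to0$ for test functions $w$ bounded in the energy norm; in particular this applies to $w=u^\e-v^\e$.

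The drift term splits as $\e^{-1}\<b(x/\e),\nabla\bar u\>$ plus $\<b,\nabla_y\phi\>\nabla\bar u$ plus $\e\phi\<b,\nabla^2\bar u\>$. The singular $\e^{-1}$ pieces cancel against the corrector equation. The remaining terms are handled by writing $b=\mathrm{div}H$ and integrating by parts onto $\bar u$ and $w$. This produces terms like $\int H(x/\e)\phi(x/\e)\,\e\,\nabla^2\bar u\,\nabla w$, which are controlled by Hölder's inequality using the moment bound \eqref{a1-3-1} on $H$ with exponent $r>q/(q-1)$, the $L^{p_0}$ sublinearity of $\e\phi$, and the ergodic theorem.

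The non-local term $L_0^\e(\e\phi\,\partial_j\bar u)$ is expanded with a discrete product rule: $\e\phi\,L_0^\e\partial_j\bar u$, plus $\partial_j\bar u\,L_0^\e(\e\phi)$, plus a cross term
$$\e^{-2}\int(\e\phi(x/\e+z)-\e\phi(x/\e))(\partial_j\bar u(x+\e z)-\partial_j\bar u(x))\nu(z)\,dz.$$
The cross term, combined with $L_0^\e\bar u$, produces the effective matrix through the ergodic theorem: its averaged kernel gives $\bar a_{jk}$ as in \eqref{e1-7} in the limit, and consequently $\bar L\bar u$ cancels against $\lambda\bar u-h$.

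Here Assumption \ref{a1-2--}(iii) is used to control the contribution of large jumps $|z|\ge K_0$ to the cross term. The idea is to bound $\int_{B(x,r)}\nu^2$ and then use \eqref{a1-2-1} together with the growth $|\phi(z)|\lesssim$ sublinear, which makes the tail summable uniformly in $\e$. Small jumps are controlled through the $W^{1,q}$ bound on $\phi$ and \eqref{a1-3-2}. Once $\langle r^\e,u^\e-v^\e\rangle\to0$, the energy identity gives $\lambda\|u^\e-v^\e\|_{L^2}^2\to0$. Since $\|v^\e-\bar u\|_{L^2(B_R)}\to0$ by sublinearity, the result follows. Localization to $B(0,R)$ versus all of $\R^d$ is handled with a cutoff, because $\bar u$ has compact support and $u^\e$ is uniformly bounded.

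The main obstacle I expect is the rigorous control of the error terms involving the unbounded stream function multiplied by the corrector, namely $\e\,H(x/\e)\phi(x/\e)$ paired with $\nabla w$. Since $w=u^\e-v^\e$ itself contains $\nabla\phi$, this requires the $W^{1,q}_{loc}$ bound, high moments of $H$, and the moment condition \eqref{t2-1-3a} on $N_{r_0}$ (through a local boundedness or Caccioppoli-type estimate) simultaneously. I would first try to close this by Hölder's inequality with exponents $(r, p_0', 2)$, which is exactly what the numerology of Assumption \ref{a1-3} suggests.
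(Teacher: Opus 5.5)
You have a genuine gap in where the effective matrix comes from. The a priori bounds and the cancellation of the $\e^{-1}$ terms are fine. (Also note that Theorem~\ref{t2-1} gives the \emph{dual} corrector, $L_0\phi_k-\langle b,\nabla\phi_k\rangle=b_k$, not $L^\w(x_k+\phi_k)=0$. A primal one can be built by applying Theorem~\ref{t2-1} to $-b$, but its matrix must then be matched with \eqref{e1-7}.)

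You claim the cross term together with $L_0^\e\bar u$ produces $\bar a_{jk}$. It does not. By Theorem~\ref{t2-1}(ii), $\Ee[\phi(z;\w)]=0$ for every $z$, and $\Ee\int|\phi(z;\w)||z|\nu(z)\,dz<\infty$ by \eqref{t2-1-2}, so Fubini gives $\Ee\int\phi_j(z;\w)z_k\nu(z)\,dz=0$. Hence these two terms only yield the bare operator $\frac12\sum_{j,k}\int z_jz_k\nu(z)\,dz\,\partial_{jk}\bar u$, whereas $\bar a_{jk}=\int z_jz_k\nu(z)\,dz+\Ee\int\phi_j(z;\w)\phi_k(z;\w)\nu(z)\,dz$.

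The enhancement comes from the drift term you discard as an error. Write $\e^{-1}b_k(x/\e;\w)=-\sum_l\partial_{x_l}\bigl(H_{kl}(x/\e;\w)\bigr)$ and integrate by parts in $\int\phi_j(x/\e;\w)b_k(x/\e;\w)\partial_{jk}\bar u\,w\,dx$. The derivative also falls on $\phi_j^\e$ and produces the $O(1)$ stationary factor $H_{kl}(x/\e;\w)\tilde\Phi_{jl}(\tau_{x/\e}\w)$, whose weak limit $\Ee[\tilde H_{kl}\tilde\Phi_{jl}]$ is in general nonzero. To show the residual has weak limit zero you need the corrector energy identity \eqref{l1-4-1} and its polarized form \eqref{r1-1-1} (Lemma~\ref{l1-4}, Remark~\ref{r1-1}, with signs adapted to your corrector). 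These turn the $(j,k)$-symmetrization of $\sum_l\Ee[\tilde H_{kl}\tilde\Phi_{jl}]$ into $-\frac12\Ee\int\phi_j\phi_k\nu\,dz$. This is not a formality: $\phi$ is not stationary, so you cannot simply test the corrector equation with it. The paper tests with $\tilde\rho_R(\phi_k^\e)\psi_r$ and uses the sublinearity \eqref{l1-3-1}, the ergodic theorem and $\tilde H_{jl}\in L^{q/(q-1)}$.

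Second, the remaining stream-function term $\int H_{kl}(x/\e;\w)\phi_j^\e(x;\w)\,\partial_l\bigl(\partial_{jk}\bar u\,w\bigr)\,dx$ cannot be closed by H\"older's inequality. Neither $u^\e$ nor $w=u^\e-v^\e$ has a gradient bounded in any $L^p$ uniformly in $\e$; for $u^\e$ all you know is the $\e$-scaled fractional energy \eqref{t1-1-0a} and the $L^\infty$ bound. The paper (term $I_{32}^{m,jkl}$) uses Parseval and splits frequencies at $|\xi|=\e^{-1}$. It puts the weight $\I_{\{|\xi|\le\e^{-1}\}}+\e^{2-\alpha}|\xi|^{2-\alpha}\I_{\{|\xi|>\e^{-1}\}}$ on $H(\cdot/\e)\phi^\e\psi$ and the complementary weight on $\partial_{jk}f\,u^\e$. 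The latter is bounded by \eqref{t1-1-0a}. The former is shown to vanish using \eqref{a1-3-2} for the increments of $H$, together with a bound for $\phi$ in $W^{(2-\alpha)\delta/2,p_1}$ obtained by interpolating between $L^{p_0}$ (Lemma~\ref{l1-5} plus Sobolev) and $W^{\alpha/2,2}$. This, not a three-factor H\"older with $\nabla w\in L^2$, is where $p_0'$ and $r>2p_0'/(p_0'-2)$ come from.

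Finally, $\langle r^\e,w\rangle\to0$ cannot hold merely for $w$ bounded in energy, since parts of $r^\e$ (the cross term, the stationary drift term) converge only weakly. You need precompactness of $\{u^\e\}$ in $L^2_{loc}$, which the paper establishes first in step (1). It then avoids comparing $u^\e$ with $v^\e$: it passes to the limit in \eqref{t1-1-0} tested with $f+\sum_j\phi_j^\e\partial_jf$ and identifies the limit by uniqueness for \eqref{t1-2-2}.
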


We give some comments on the proof of Theorem \ref{t1-2}.

\begin{remark}\label{r1-2}
\begin{itemize}
\item [(i)] Compared with the case of symmetric non-local operators with $L_2$-integrable jumping kernels \cite{BCKW, FHS, PZ} or the case where $L_0$ is a second-order differential operator \cite{FK, Fe, PV}, we will employ 
    a different method to handle the effect of the drift field $b(x;\w)$ in the construction of the associated corrector $\phi$; see \eqref{e:equ-01} below. In particular, additional analysis based on the spectral representation of the drift field is carried out; see the proof of Theorem \ref{t2-1} for details.

\item [(ii)] To identify the limiting operator $\bar L$, we must address a term involving the corrector $\phi$ in the test function,
see the function $f_{\e_m}(x;\w)$ defined by \eqref{e:test} in the proof of Theorem \ref{t1-2}. 
For this reason, the regularity property $\phi\in W^{1,q}_{loc}(\R^d;\R^d)$ for some $q\in (1,2)$ is essential. In the case $L_0=\Delta$, the property $\phi\in W^{1,2}_{loc}(\R^d;\R^d)$ holds automatically for the corrector, thanks to the a priori regularity estimate \begin{align}\label{r1-2-1}\Ee\left[|\nabla \phi(x;\w)|^2\right]<\infty,\quad x\in \R^d,\end{align} see, e.g., \cite{FK, Fe, PV} for further details. However, for the non-local operator $L_0$ considered in this paper, the main difficulty is that the a priori estimate \eqref{r1-2-1} fails to hold. Instead, we can only obtain weaker a priori estimates \eqref{t2-1-2} and \eqref{t2-1-3} below for the corrector corresponding to $L^\w$.
    We develop a novel procedure tailored to non-local operators for establishing the $W^{1,q}_{loc}(\R^d;\R^d)$ regularity of the corrector $\phi$ with $q\in (1,2)$,
    which is founded on these weaker a priori estimates combined with additional structural conditions imposed on the stream function $\{\tilde H_{jl}\}_{1\le j,l\le d}$.
Furthermore, owing to the reduced regularity of the non-local operator $L_0$, compared to the case of second-order differential operators we need encounter additional technical challenges in eliminating several residual terms throughout the stochastic homogenization.
   For instance, see step (3) in the proof of Theorem \ref{t1-2}. This partly explains why the assumption $\alpha\in (1,2)$ is necessary.

\item [(iii)]
We note that the approach here differs from those in \cite{CMP, CSZ, Si}, where Schauder estimates for elliptic equations induced by supercritical non-local operators have been established. This difference arises partly from the weaker a priori estimates \eqref{t2-1-2} and \eqref{t2-1-3}, as well as the lower regularity of the drift field $b(x;\w)$; for example, we cannot guarantee that the corrector $\phi$ is globally bounded on $\R^d$. Furthermore, in \cite{CMP, CSZ, Si}, the Schauder estimates depend only on the small-jump part of the associated non-local operator $L_0$. By contrast, the situation is different for our model: in the present setting, the large-jump part of the non-local operator $L_0$ also plays a crucial role in the $W_{loc}^{1,q}(\R^d;\R^d)$ estimate of the corrector $\phi$. Indeed, the regularity condition
\eqref{t2-1-2a}
can be interpreted as a kind of (large-scale) averaged lower bound for the jumping kernel associated with large jumps.
\end{itemize}
\end{remark}

Below we take some especial examples such that Assumption \ref{a1-2--}(iii) is satisfied.

\begin{example}\label{ex1-1}
The following conclusions hold.
\begin{itemize}
\item [(1)] Suppose that $\nu(z)=\frac{1}{|z|^{d+\beta_1}\log^{\beta_2}(2+|z|)}$ for every $|z|>1$ with
$\beta_1>2$ and $\beta_2\in \R$. Then,
\eqref{t2-1-2a} and \eqref{a1-2-1} hold with $K_0=2$ and $\gamma(x)\equiv 1$.
\medskip

\item [(2)] If $\nu(z)=\exp\left(-a|z|^{\beta}\right)$ for every $|z|>1$ with $a>0$ and $\beta>0$,
then  \eqref{t2-1-2a} and \eqref{a1-2-1} hold with
$K_0=\frac{1}{1-2^{-\frac{1}{\beta}}}+1$
and
$\gamma(x)=\exp\left(a\left(1-\left(1-\frac{1}{K_0}\right)^\beta\right)|x|^{\beta}\right)$.

\item [(3)] If $\nu(z)=0$ for every $|z|>1$, then
\eqref{t2-1-2a} and \eqref{a1-2-1} hold with $K_0=2$ and $\gamma(x)\equiv 1$.
\end{itemize}
\end{example}

\ \

The remainder of this paper is organized as follows. In the next section, we establish the existence of the corrector $\phi$ associated with the dual operator for $L^\w$. In Section \ref{section3}, we first prove the existence of a weak solution to the scaled resolvent equation \eqref{e1-6}, and then derive the local $W^{1,q}(\R^d;\R^d)$ regularity for the corrector $\phi$ with $q\in (1,2)$, as well as the relationships between $\phi$ and the drift field $b(x;\w)$. The final section is devoted to the proof of Theorem \ref{t1-2}.

\medskip

\noindent{\bf Notation}\,\, Throughout the paper, let
$B(x,R):=\{z\in \R^d: |z-x|<R\}$ be the open ball with center $x\in \R^d$ and radius $R>0$.
For every $1\le p\le \infty$, let $L^p(\R^d)$ denote the $L^p$ space with respect to the Lebesgue measure
endowed with the norm $\|f\|_{L^p(\R^d)}:=\left(\int_{\R^d}|f(x)|^p \,dx\right)^{1/p}$ for $1\le p<\infty$ and $\|f\|_{L^\infty(\R^d)}=\sup_{x\in \R^d}|f(x)|$. For any $1\le p<\infty$, let $W^{1,p}(\R^d)$ denote the Sobolev space as follows
\begin{align*}
W^{1,p}(\R^d):=\Big\{f\in L^p(\R^d):&\,\,\text{the\ weak\ gradient}\ \nabla f\ \text{exists}\\
&\,\,\text{and}\ \|f\|_{W^{1,p}(\R^d)}:=\|f\|_{L^p(\R^d)}+\left(\int_{\R^d}|\nabla f(x)|^p\, dx\right)^{1/p}<\infty\Big\}.
\end{align*}
For every $\beta\in (0,1)$ and $1\le p<\infty$, let $W^{\beta,p}(\R^d)$ be the Sobolev-Slobodeckij space
(i.e., the fractional Sobolev space) as follows
\begin{align*}
W^{\beta,p}(\R^d)&:=\left\{f\in L^p(\R^d):\ \|f\|_{W^{\beta,p}(\R^d)}:=
\|f\|_{L^p(\R^d)}+\left(\int_{\R^d}\int_{\R^d}\frac{|f(x)-f(y)|^p}{|x-y|^{d+\beta p}}\,dx\,dy\right)^{1/p}<\infty\right\}.
\end{align*}

\section{The corrector and homogenized coefficients}
Let $L^\w$ be the random operator given by \eqref{e:operator1}. Since  
 $L_0$ is a non-local symmetric L\'evy operator 
 and the random drift $b(\cdot;\w)$ is divergence-free such that \eqref{e1-5} is satisfied, the dual operator corresponding to $L^\w$ is formally given by
$$L^\w_*f(x)=L_0f(x)-\langle b(x;\w), \nabla f(x)\rangle.$$ The weak solution to the following equation
\begin{equation}\label{e:equ-01}
L^\w_*\phi_k(\cdot;\w)(x)=L_0 \phi_k(\cdot;\w)(x)-\langle b(x;\w), \nabla \phi_k(x;\w)\rangle=b_k(x;\w),
\quad x\in \R^d, 1\le k\le d
\end{equation} plays an important role in the homogenization theory
for $L^\w$, and it is usually called the (dual) corrector in the literature; e.g. see \cite{Fe}. Here we use the notation $\phi(x;\w)=(\phi_1(x;\w),\cdots, \phi_d(x;\w))$ with
$\phi_k(x;\w)$ denoting the $k$-th coordinate of $\phi(x;\w)$ for all $1\le k\le d$, and the same for $b(x;\w)=(b_1(x;\w),\cdots, b_d(x;\w))$.  The main purpose of this section is to verify the existence of the corrector $\phi(x;\w)$ to the equation \eqref{e:equ-01}, which is involved in the homogenized matrix coefficients $\{\bar a_{jk}\}_{1\le j,k\le d}$ given in \eqref{e1-7} for the limit operator $\bar L$.

\subsection{The existence of the corrector}

We begin with the following simple lemma.

\begin{lemma}\label{l1-1} Under Assumption $\ref{a1-2--}$, there exist positive constants $c_1\le c_2$ such that for all $\xi\in \R^d$,
\begin{equation}\label{l1-1-1}
c_1\left(|\xi|^2\I_{\{|\xi|\le 1\}}+
|\xi|^\alpha\I_{\{|\xi|> 1\}}\right)\le \int_{\R^d}(1-e^{i\langle \xi, z\rangle })\nu(z)\,dz\le c_2\left(|\xi|^2\I_{\{|\xi|\le 1\}}+
|\xi|^\alpha\I_{\{|\xi|> 1\}}\right).
\end{equation}
\end{lemma}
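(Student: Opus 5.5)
By the symmetry $\nu(z)=\nu(-z)$, the symbol is real:
\[
\psi(\xi):=\int_{\R^d}(1-e^{i\langle \xi,z\rangle})\nu(z)\,dz=\int_{\R^d}\bigl(1-\cos\langle \xi,z\rangle\bigr)\nu(z)\,dz ,
\]
and it is finite because $\int|z|^2\nu(z)\,dz<\infty$. We split it as $\psi=\psi_s+\psi_\ell$, where $\psi_s$ is the integral over $\{|z|\le 1\}$, with kernel $|z|^{-d-\alpha}$, and $\psi_\ell$ is the integral over $\{|z|>1\}$. We then estimate each part in the two regimes $|\xi|\le 1$ and $|\xi|>1$.

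\emph{Upper bound.} We use $1-\cos t\le \min\{t^2/2,2\}$. Consider first $|\xi|\le 1$. Then
\[
\psi(\xi)\le \tfrac12|\xi|^2\int_{\R^d}|z|^2\nu(z)\,dz\le c|\xi|^2,
\]
by Assumption \ref{a1-2--}(ii) together with $\int_{\{|z|\le1\}}|z|^{2-d-\alpha}\,dz<\infty$. Now consider $|\xi|>1$. On the large jumps we use the bound $2$:
\[
\psi_\ell(\xi)\le 2\int_{\{|z|>1\}}\nu(z)\,dz\le c\le c|\xi|^\alpha .
\]
For the small jumps we split at $|z|=1/|\xi|\le 1$:
\[
\psi_s(\xi)\le \tfrac12|\xi|^2\int_{\{|z|\le 1/|\xi|\}}|z|^{2-d-\alpha}\,dz+2\int_{\{1/|\xi|<|z|\le1\}}|z|^{-d-\alpha}\,dz\le c|\xi|^{\alpha}.
\]
This step uses $\alpha<2$ for the first integral and $\alpha>0$ for the second.

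\emph{Lower bound.} Since $\nu\ge0$, we have $\psi\ge\psi_s$, so it suffices to bound $\psi_s$ from below. We use $1-\cos t\ge c t^2$ for $|t|\le 1$. For $|\xi|\le 1$ and $|z|\le 1$ we have $|\langle\xi,z\rangle|\le1$, hence
\[
\psi_s(\xi)\ge c\int_{\{|z|\le1\}}\langle \xi,z\rangle^2|z|^{-d-\alpha}\,dz=c'|\xi|^2 .
\]
The last equality uses rotational invariance, which gives $\int_{\{|z|\le 1\}}\langle \xi,z\rangle^2|z|^{-d-\alpha}\,dz=\frac{|\xi|^2}{d}\int_{\{|z|\le1\}}|z|^{2-d-\alpha}\,dz$. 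For $|\xi|>1$ we restrict to $\{|z|\le 1/|\xi|\}\subset\{|z|\le1\}$ and argue the same way:
\[
\psi_s(\xi)\ge c\,\frac{|\xi|^2}{d}\int_{\{|z|\le 1/|\xi|\}}|z|^{2-d-\alpha}\,dz=c''|\xi|^{2}|\xi|^{\alpha-2}=c''|\xi|^{\alpha}.
\]

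Combining the two regimes gives \eqref{l1-1-1}, with constants depending only on $d$, $\alpha$ and $\int|z|^2\nu(z)\,dz$; we may take $c_1\le c_2$ after shrinking $c_1$. Nothing beyond Assumption \ref{a1-2--}(i)--(ii) is needed. The only points requiring care are the bookkeeping of the split at $|z|=1/|\xi|$ and the use of rotational invariance of the small-jump kernel to pass from $\langle\xi,z\rangle^2$ to $|\xi|^2|z|^2/d$.
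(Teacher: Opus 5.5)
Your proof is correct and follows essentially the same route as the paper. Both split into small and large jumps, bound the large-jump part by the second moment (for $|\xi|\le 1$) or the total mass (for $|\xi|>1$), and handle the small-jump part via $1-\cos t\asymp t^2$ near $0$ together with rotational invariance of $|z|^{-d-\alpha}$, which the paper uses only implicitly. Both also read the exponential integral as a principal value, so the odd sine part drops out; this matters because for $\alpha>1$ the sine term is not absolutely integrable near $0$. The only difference is cosmetic: the paper rescales $z\mapsto z/|\xi|$ and compares the integral over $\{|z|\le|\xi|\}$ with constants using an auxiliary threshold $r_0$, whereas you split directly at $|z|=1/|\xi|$, which avoids the $r_0$-versus-$1$ bookkeeping.
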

\begin{proof}
By the change of variable, for all $\xi\in \R^d$,
\begin{align*}
\int_{\{|z|\le 1\}}\frac{1-e^{-i\langle \xi, z\rangle}}{|z|^{d+\alpha}}\,dz&=
|\xi|^\alpha \int_{\{|z|\le |\xi|\}}\frac{1-e^{-i\left\langle \frac{\xi}{|\xi|}, z\right\rangle}}{|z|^{d+\alpha}}\,dz=|\xi|^\alpha \int_{\{|z|\le |\xi|\}}
\frac{1-{\rm cos}\left(\left\langle \frac{\xi}{|\xi|}, z\right\rangle\right)}{|z|^{d+\alpha}}\,dz.
\end{align*}

Note that there exist positive constants $r_0$, $c_1$ and $c_2$ such that for all $\xi,z\in \R^d$ with $|z|\le r_0$,
\begin{align*}
c_1\left|\left\langle \frac{\xi}{|\xi|}, z\right\rangle\right|^2\le 1-{\rm cos}\left(\left\langle \frac{\xi}{|\xi|}, z\right\rangle\right)\le c_2\left|\left\langle \frac{\xi}{|\xi|}, z\right\rangle\right|^2,
\end{align*}
which implies immediately that for all $\xi \in \R^d$ with $|\xi|\le r_0$,
\begin{align*}
c_3|\xi|^2\le \int_{\{|z|\le 1\}}\frac{1-e^{-i\langle \xi, z\rangle}}{|z|^{d+\alpha}}\,dz\le c_4|\xi|^2.
\end{align*}
On the other hand, for all $\xi \in\R^d$ with $|\xi|>r_0$,
\begin{align*}
c_5\le \int_{\{|z|\le r_0\}}
\frac{1-{\rm cos}\left(\left\langle \frac{\xi}{|\xi|}, z\right\rangle\right)}{|z|^{d+\alpha}}\,dz\le
\int_{\{|z|\le |\xi|\}}
\frac{1-{\rm cos}\left(\left\langle \frac{\xi}{|\xi|}, z\right\rangle\right)}{|z|^{d+\alpha}}\,dz\le
\int_{\R^d}
\frac{1-{\rm cos}\left(\left\langle \frac{\xi}{|\xi|}, z\right\rangle\right)}{|z|^{d+\alpha}}\,dz\le c_6,
\end{align*}
which yields that for all $\xi \in \R^d$ with $|\xi|>r_0$,
\begin{align*}
c_7|\xi|^\alpha\le \int_{\{|z|\le 1\}}\frac{1-e^{-i\langle \xi, z\rangle}}{|z|^{d+\alpha}}\,dz\le c_8|\xi|^\alpha.
\end{align*}
Combining with both estimates above, we find that
\begin{equation}\label{l1-1-2}
c_9\left(|\xi|^2\I_{\{|\xi|\le 1\}}+
|\xi|^\alpha\I_{\{|\xi|> 1\}}\right)\le \int_{\{|z|\le 1\}}\frac{1-e^{-i\langle \xi, z\rangle}}{|z|^{d+\alpha}}\,dz\le c_{10}\left(|\xi|^2\I_{\{|\xi|\le 1\}}+
|\xi|^\alpha\I_{\{|\xi|> 1\}}\right).
\end{equation}

Furthermore, by using the fact $\nu(z)=\nu(-z)$ for all $z\in \R^d$ and Taylor's expansion,
\begin{align*}
\int_{\{|z|> 1\}}(1-e^{-i\langle \xi, z\rangle})\nu(z)\,dz
&\le c_{11}\left[|\xi|^2\left(\int_{\{|z|>1\}}|z|^2 \nu(z)\,dz\right)\I_{\{|\xi|\le 1\}}+
\left(\int_{\{|z|>1\}}\nu(z)\,dz\right)\I_{\{|\xi|> 1\}}\right]\\
&\le c_{12}\left(|\xi|^2\I_{\{|\xi|\le 1\}}+
|\xi|^\alpha\I_{\{|\xi|> 1\}}\right).
\end{align*}

According to this and \eqref{l1-1-2}, we can prove \eqref{l1-1-1}.
\end{proof}

To move further, we need to introduce some notations. Define
\begin{align*}
\LL_0 \tilde F(\w)={\rm p.v.}\int_{\R^d}(\tilde F(\tau_z \w)-\tilde F(\w))\nu(z)\, dz,\quad \tilde F\in \D.
\end{align*}
Then,
$$
\LL_0 \tilde F(\tau_x \w)=L_0 F(\cdot;\w)(x),\quad x\in \R^d, \tilde F \in \D,
$$
where $F(x;\w):=\tilde F(\tau_x \w)$.
According to the stationary property of $\{\tau_x\}_{x\in \R^d}$ and the fact $\nu(z)=\nu(-z)$ for all $z\in \R^d$, we have \begin{equation}\label{e1-4}
\Ee[\LL_{0}\tilde F \cdot \tilde G]=\Ee[\tilde F\cdot \LL_{0}\tilde G]=-
\frac{1}{2}\Ee\left[\int_{\R^d}(\tilde F(\tau_z\w)-\tilde F(\w))(\tilde G(\tau_z\w)-\tilde G(\w))\nu(z)\,dz\right],\quad \tilde F,\tilde G\in \D.
\end{equation} In particular, for all $\tilde F\in \D$,
$$-\Ee[\LL_{0}\tilde F \cdot \tilde F]=\frac{1}{2}
\Ee\left[\int_{\R^d}(\tilde F(\tau_z\w)-\tilde F(\w))^2\nu(z)\,dz\right].$$
On the other hand, according to \eqref{e1-1a},
\begin{equation}\label{e1-6a}
\begin{split}
-\Ee[\LL_{0}\tilde F \cdot \tilde F]&=
-\Ee\left[\left(\int_{\R^d}(T_{-z}\tilde F(\w)-\tilde F(\w))\nu(z)\,dz\right)\cdot \tilde F(\w)\right]\\
&=-\Ee\left[\int_{\R^d} \left(\int_{\R^d}(e^{-i\langle \xi, z\rangle}-1)\nu(z)\,dz\right)[U(d\xi)\tilde F\cdot \tilde F]\right],\quad \tilde F\in \D.
\end{split}
\end{equation}

Next, we will prove the existence of the corrector $\phi(x;\w)$ to the equation \eqref{e:equ-01}.
\begin{theorem}\label{t2-1}
Suppose that Assumptions $\ref{a1-2--}$ and $\ref{a1-2}$ hold. Then
there exist $\phi:\R^d\times \Omega \to \R^d$ and a $\Pp$-null set $\Lambda \subset \Omega$ such that the following properties are satisfied.
\begin{itemize}
\item [(i)]
For every $1\le k \le d$, $f\in C_c^1(\R^d)$ and $\w\notin \Lambda$,
\begin{equation}\label{t2-1-1}
\begin{split}
&\frac{1}{2}\int_{\R^d}\int_{\R^d}
\left(\phi_k(x+z;\w)-\phi_k(x;\w)\right)
\left(f(x+z)-f(x)\right)\nu(z)\,dz\,dx\\
& -\sum_{j=1}^d\int_{\R^d}b_j(x;\w)\phi_k(x;\w)\frac{\partial f(x)}{\partial x_j}\,dx=-\int_{\R^d}b_k(x;\w)f(x)\,dx.
\end{split}
\end{equation}

\item [(ii)] $\Ee\left[\phi(x;\w)\right]=0$ for every $x\in \R^d$, and the
co-cycle property holds for all $\w\notin \Lambda$ and $x,z\in \R^d$,
\begin{equation}\label{t2-1-1a}
\phi(x+z;\w)-\phi(x;\w)=\phi(z;\tau_x \w).
\end{equation}

\item [(iii)] $\phi(\cdot;\w)\in W_{loc}^{\alpha/2,2}(\R^d,\R^d)$ for every $\w\notin \Lambda$,
\begin{equation}\label{t2-1-2}
\Ee\left[\int_{\R^d}|\phi(x;\w)|^2\nu(x)\,dx\right]<\infty,
\end{equation} and for every bounded subset $D\subset \R^d$,
\begin{equation}\label{t2-1-3}
\Ee\left[\int_D |\phi(x;\w)|^2 \,dx\right]+\Ee\left[\int_D\int_{\R^d}\left|\phi(x+z;\w)-\phi(x;\w)\right|^2\nu(z)\,dz\,dx\right]<\infty.
\end{equation}
\end{itemize}
\end{theorem}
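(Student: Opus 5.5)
We construct $\phi$ by the ``environment seen from the particle'' method: solve a regularized equation on $\Omega$, derive uniform energy bounds, and pass to a weak limit in a space of gradient-type (co-cycle) fields. For $\lambda>0$ and $1\le k\le d$ we look for $\tilde\phi^\lambda_k\in\HH_1$ solving, in the weak sense on $\HH$,
\begin{equation*}
\lambda\tilde\phi^\lambda_k-\LL_0\tilde\phi^\lambda_k+\sum_{j=1}^d \tilde b_j D_j\tilde\phi^\lambda_k=\tilde b_k .
\end{equation*}
This is the stationary form of \eqref{e:equ-01}, with signs matched through \eqref{e1-3}. Since $\tilde b$ need not be bounded, we first truncate the stream functions. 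Replace $\tilde H_{jl}$ by antisymmetric mollified-and-truncated versions $\tilde H^{n}_{jl}\in\D$ (for instance, convolve along orbits of $\tau$ with a smooth kernel after truncation), and set $\tilde b^n_j=\sum_l D_l\tilde H^n_{jl}$, which is bounded and divergence-free. The form
\begin{equation*}
\mathcal E_\lambda(\tilde F,\tilde G)=\lambda\Ee[\tilde F\tilde G]-\Ee[\LL_0\tilde F\cdot\tilde G]+\Ee\Big[\sum_j\tilde b^n_jD_j\tilde F\cdot\tilde G\Big]
\end{equation*}
is coercive on the Hilbert space $\HH^{\alpha/2}$ with norm $\lambda\Ee[\tilde F^2]-\Ee[\LL_0\tilde F\cdot\tilde F]$. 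By Lemma \ref{l1-1} and \eqref{e1-6a}, this norm is equivalent to $\int(\lambda+|\xi|^2\wedge|\xi|^\alpha)\Ee[U(d\xi)\tilde F\cdot\tilde F]$. The antisymmetric part vanishes on the diagonal because of \eqref{e1-2a} and $\sum_jD_j\tilde b^n_j=0$. Lax--Milgram, or a Lions-type theorem for a nonsymmetric operator with bounded antisymmetric part, then gives a solution $\tilde\phi^{\lambda,n}_k$.

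Next we derive the key a priori bound, uniform in $\lambda$ and $n$. Test the equation with $\tilde\phi^{\lambda,n}_k$ itself. The drift term drops out, and the right-hand side becomes
\begin{equation*}
\Ee[\tilde b^n_k\tilde\phi]=\sum_l\Ee[D_l\tilde H^n_{kl}\,\tilde\phi]=-\sum_l\Ee[\tilde H^n_{kl}D_l\tilde\phi].
\end{equation*}
We cannot bound $D_l\tilde\phi$ by the $\alpha/2$-energy directly, so we pass through the spectral measure, as Remark \ref{r1-2}(i) suggests. Write $\Ee[\tilde H D_l\tilde\phi]=\int i\xi_l\,\Ee[U(d\xi)\tilde\phi\cdot\tilde H]$ and apply Cauchy--Schwarz with weight $w(\xi)=|\xi|^2\wedge|\xi|^\alpha$. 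This gives
\begin{equation*}
\Big(\int w\,\Ee[U\tilde\phi\cdot\tilde\phi]\Big)^{1/2}\Big(\int \tfrac{|\xi|^2}{w(\xi)}\Ee[U\tilde H\cdot\tilde H]\Big)^{1/2}.
\end{equation*}
The second factor is at most $\int(1+|\xi|^{2-\alpha})\Ee[U\tilde H\cdot\tilde H]\le C\|\tilde H\|_{\HH_1}^2$, because $|\xi|^2/w=1$ for $|\xi|\le1$ and $|\xi|^{2-\alpha}\le|\xi|^2$ for $|\xi|>1$. Hence
\begin{equation*}
\lambda\Ee[(\tilde\phi^{\lambda,n}_k)^2]+\Ee\Big[\int|\tilde\phi^{\lambda,n}_k(\tau_z\w)-\tilde\phi^{\lambda,n}_k(\w)|^2\nu(z)dz\Big]\le C\sum_{j,l}\|\tilde H_{jl}\|_{\HH_1}^2,
\end{equation*}
uniformly in $\lambda$ and $n$. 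In particular $\lambda\Ee[(\tilde\phi^{\lambda,n}_k)^2]\le C$.

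To pass to the limit, define the increments $\psi^{\lambda,n}_k(z,\w)=\tilde\phi^{\lambda,n}_k(\tau_z\w)-\tilde\phi^{\lambda,n}_k(\w)$. They are bounded in $L^2(\Omega\times\R^d,\Pp\otimes\nu(z)dz)$. Extract a weak limit $\psi_k$ as $n\to\infty$ and then $\lambda\to0$. The co-cycle relation $\psi(z+y,\w)=\psi(y,\w)+\psi(z,\tau_y\w)$ is linear and survives weak limits, at least after testing against functions of $(y,z)$. Set $\phi_k(x;\w)=\psi_k(x,\w)$, choosing a good version through the co-cycle identity so that \eqref{t2-1-1a} holds outside a null set $\Lambda$. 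The bound \eqref{t2-1-2} follows from weak lower semicontinuity.

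The zero mean in (ii) comes from $\Ee[\tilde\phi^\lambda(\tau_z\cdot)-\tilde\phi^\lambda]=0$ by stationarity. For \eqref{t2-1-3} we need local $L^2$ and local $\alpha/2$ bounds for $x$ outside the support of $\nu$, i.e.\ large $x$ where $\nu$ may vanish. We handle this by the co-cycle property: write $\phi(x)$ as a sum of $O(|x|)$ increments of size at most $1$, each controlled by \eqref{t2-1-2} and stationarity. For the Gagliardo part, integrate over $x\in D$ and use stationarity, which gives $|D|\cdot\Ee\int|\phi(z)|^2\nu(z)dz$. Membership in $W^{\alpha/2,2}_{loc}$ then follows from the small-jump part $|z|^{-d-\alpha}$.

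Finally we identify the equation \eqref{t2-1-1}. Test the $\lambda,n$ equation against $\tilde G(\w)=\int f(x)\,g(\tau_{-x}\w)dx$-type functions, or equivalently integrate the realization against $f\in C_c^1$ for a.e. $\w$. The term $\lambda\int\phi^\lambda f$ vanishes in the limit since $\lambda\Ee[(\tilde\phi^\lambda)^2]\le C$; here we must subtract $\tilde\phi^\lambda(\w)$, which is harmless because $\int b_j\partial_jf=0$ by divergence-freeness. The truncation $b^n\to b$ passes in $L^2_{loc}$.

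\textbf{Main obstacle.} The drift term $\int b_j\phi_k\partial_jf$ is a product of two weakly/strongly converging factors, and $\phi$ is only in $L^2_{loc}$ while $b=\nabla\cdot H$ is only in $L^2$. The convergence $\int b^n_j\phi^{\lambda,n}_k\partial_jf\to\int b_j\phi_k\partial_jf$ is therefore not automatic. We plan to rewrite the term via the stream function as $\int H^n_{jl}\,\partial_l(\phi\,\partial_jf)$. By antisymmetry the pieces with second derivatives of $f$ cancel, leaving $\int H_{jl}\partial_l\phi\,\partial_jf$. This only needs $\phi$ to have a gradient with at most $\alpha/2$-regularity paired against $H\in W^{1,2}$, which is again a spectral pairing of the type used in the energy bound and is continuous under the weak convergence above. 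If that fails, we fall back to proving strong $L^2_{loc}$ compactness of $\phi^{\lambda,n}$ via the uniform $W^{\alpha/2,2}_{loc}$ bound and Rellich, combined with strong $L^2_{loc}$ convergence of $b^n$.
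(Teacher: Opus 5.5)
Your plan is in substance the paper's proof. The uniform bound comes from testing with the solution itself, where the drift drops out, and estimating $\Ee[\tilde b_k\tilde\phi]$ through the spectral measure with weight $|\xi|^2\wedge|\xi|^\alpha$; the paper uses Young's inequality with the same weight. One then takes weak limits of the increments $\tilde\phi(\tau_z\w)-\tilde\phi(\w)$ in $L^2(\nu(z)\,dz\times\Pp)$. Finally \eqref{t2-1-1} is identified by testing with $\tilde G(\tau_{-x}\w)$, integrating against $f(x)\,dx$ and averaging over $\Omega$.

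The real difference is the regularization. The paper adds a viscosity $\theta\sum_jD_jD_j$ (and $\theta\,\mathrm{id}$) and truncates the stream with the odd cut-off $\rho_R$. Lax--Milgram on $\HH_1$ is then immediate, and the approximants satisfy $\phi_{\theta,R}(\cdot;\w)\in W^{1,2}_{loc}$. That regularity is also what lets the paper absorb $\|\nabla\phi^{r_0}_{k,\theta_m,R_m}\|_{L^2(\R^d)}$ when it reuses the same approximants in Lemma \ref{l1-5}.

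You instead mollify and truncate the stream, which gives a bounded drift with bounded derivatives, and you work on $\HH^{\alpha/2}$. You then owe a proof that the drift part of the form is continuous there. It is, but only because $\alpha>1$:
bound $|\Ee[\tilde b^n_jD_j\tilde F\,\tilde G]|$ by $\|\tilde F\|_{\HH^{\alpha/2}}\|\tilde b^n_j\tilde G\|_{\HH^{1-\alpha/2}}$, get the multiplier bound on $\HH^{1-\alpha/2}$ by interpolating between $\HH$ and $\HH_1$, and use $1-\alpha/2<\alpha/2$. Alternatively, Lions' theorem with test functions in $\D$ gives the uniform bound directly. Since your approximants lie only in $\HH^{\alpha/2}$, the drift must be handled with $D_j$ moved onto $\tilde b^n_j\tilde F$. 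Your remark about subtracting $\tilde\phi^\lambda(\w)$ is then the right way to pass to increments.

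There are two corrections.

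First, the drift in your stationary equation has the wrong sign relative to $\LL_0$. By \eqref{e1-3}, the stationary form of \eqref{e:equ-01} is $\LL_0\tilde\phi_k+\sum_j\tilde b_jD_j\tilde\phi_k=\tilde b_k$. So the regularized problem should read $\lambda\tilde\phi_k-\LL_0\tilde\phi_k-\sum_j\tilde b_jD_j\tilde\phi_k=-\tilde b_k$. Yours corresponds to $L^\w$ rather than $L^\w_*$. The estimates do not see $b\mapsto -b$, but \eqref{t2-1-1} does.

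Second, your ``main obstacle'' is not one in the averaged formulation you already set up. After stationarity the drift term is $\Ee\big[\tilde G\sum_j\int b^n_j\,\phi^{\lambda,n}_k\,\partial_jf\,dx\big]$, with $\phi^{\lambda,n}_k(x;\w)=\tilde\phi^{\lambda,n}_k(\tau_x\w)-\tilde\phi^{\lambda,n}_k(\w)$. This is a pairing in $L^2(\Omega\times D)$ of $b^n_j\,\partial_jf\,\tilde G\to b_j\,\partial_jf\,\tilde G$ (strongly) with $\phi^{\lambda,n}_k\rightharpoonup\phi_k$ (weakly). That is exactly how the paper passes to the limit. One then removes $\tilde G$ by density and takes a countable dense family of $f$ to get a common null set.

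What you do not have is any pathwise convergence. So ``equivalently integrate the realization for a.e.\ $\w$'' is not equivalent. Your Rellich fallback fails for the same reason: the $W^{\alpha/2,2}_{loc}$ bounds hold only in expectation, so they give neither compactness in $\w$ nor $\phi^{\lambda,n}(\cdot;\w)\to\phi(\cdot;\w)$ for fixed $\w$. The stream-function rewriting is unnecessary.

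Finally, the argument needs weak convergence on every bounded $D$, and $\nu$ may vanish for large $|z|$. This is most easily obtained as in \eqref{t2-1-8}: prove uniform $L^2(\Omega\times D)$ bounds for the approximants via their exact co-cycle identity before passing to the limit, rather than constructing a version of the limit afterwards.
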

\begin{proof}
(1) For any $R\ge1$,  let $\rho_R:\R \to
\R$ be a smooth cut-off function such that
\begin{align*}
\rho_R(s)=
\begin{cases}
s,\ \ &\ |s|\le R,\\
\in [-R,R],\ \ &\ R<|s|<2R,\\
0,\ \ &\ |s|\ge 2R,
\end{cases}
\end{align*}
$\sup_{R\ge 1}\sup_{s\in \R}|\rho_R'(s)|\le 2$ and $\rho_R(s)=-\rho_R(-s)$ for every $s\in \R$.

For every $\theta>0$ and $R\ge 1$, define
$$
\LL_{*,\theta,R} \tilde F(\w)=\LL_0 \tilde F(\w)+\theta \sum_{j=1}^d D_j (D_j \tilde F)(\w)+\sum_{j=1}^d \tilde b_j^R(\w)D_j \tilde F(\w),\quad\tilde F\in \D,
$$
where $\LL_0$ is given by \eqref{e1-6}, and
\begin{equation}\label{t1-1-2}
\tilde b_j^R(\w):=\sum_{l=1}^d D_l (\rho_R(\tilde H_{jl}))(\w).
\end{equation} According to \eqref{e1-3}, we can verify that
$$\LL_{*,\theta,R} \tilde F(\tau_x \w)=L^\w_{*,\theta, R} F(\cdot;\w)(x),$$ where
$F(x;\w)=\tilde F(\tau_x\w)$, $b^R(x;\w)=\tilde b^R(\tau_x \w)$ and
$$L^\w_{*,\theta, R}f(x)=L_0f(x)+\theta \Delta f(x)-\left\langle b^R(x;\w), \nabla f(x)\right\rangle,\quad f\in C_b^2(\R^d).$$

Furthermore, define
$$
\EE_{*,\theta,R}(\tilde F, \tilde G)=-\Ee[\LL_{*,\theta,R}\tilde F \cdot \tilde G],\quad \tilde F, \tilde G\in \D.
$$
By \eqref{l1-1-1} and \eqref{e1-6a}, we obtain that
$$
-\Ee[\LL_0 \tilde F \cdot \tilde F]\le c_0 \int_{\R^d}\left(1+|\xi|^2\right)\Ee[U(d\xi)\tilde F \cdot \tilde F]
=c_0\|\tilde F\|_{\HH_1}^2,\quad \tilde F\in \D.
$$ On the other hand, note that, for any $\tilde F\in \D$,
\begin{equation}\label{e:comment}\begin{split}
\sum_{j=1}^d\Ee[\tilde b_j^R \tilde F D_j \tilde F]
&=\sum_{l,j=1}^d\Ee[D_l(\rho_R(\tilde H_{jl})) \tilde F D_j \tilde F]\\
&=-\sum_{l,j=1}^d\Ee[\rho_R(\tilde H_{jl})D_l\tilde F D_j\tilde F]-\sum_{l,j=1}^d \Ee[\rho_R(\tilde H_{jl})\tilde F D_l(D_j\tilde F)]=0,\end{split}
\end{equation}
thanks to the anti-symmetry of $\{\tilde H_{jl}\}_{1\le j,l\le d}$, the fact that $\rho_R(s)=-\rho_R(-s)$ for all $s\in \R$ and the integration by parts formula \eqref{e1-2a}.
These, along with the boundedness of $\tilde b_j^R(\w)$, yield that
\begin{align*}
\EE_{*,\theta,R}(\tilde F, \tilde G)\le c_1(\theta,R)\|\tilde F\|_{\HH_1}\|\tilde G\|_{\HH_1},\ \EE_{*,\theta,R}(\tilde F, \tilde F)\ge
c_2(\theta,R)\sum_{j=1}^d\Ee[|D_j \tilde F|^2],\quad \tilde F, \tilde G\in \D.
\end{align*}
Therefore, $\EE_{*,\theta,R}$ can be extended to a bilinear closed form on $\HH_1\times \HH_1$. In particular, according to the
Lax-Milgram theorem (cf.\ see \cite[Section 2]{PV}), there exists a unique $\tilde \phi_{\theta,R}=(\tilde \phi_{1,\theta,R},\cdots, \tilde \phi_{d,\theta,R})
$ with $\tilde \phi_{k,\theta,R} \in \HH_1$, $1\le k \le d$, such that for every $1\le k \le d$ and $\tilde F \in \HH_1$,
\begin{equation}\label{t2-1-4}
\begin{split}
  \theta\Ee[\tilde\phi_{k,\theta,R}\tilde F]+\EE_{*,\theta,R}(\tilde \phi_{k,\theta,R},\tilde F)
&=\theta
\Ee[\tilde \phi_{k,\theta,R}\tilde F]+\theta\sum_{j=1}^d \Ee[D_j\tilde \phi_{k,\theta,R}D_j \tilde F]+\sum_{j=1}^d\Ee[\tilde b_j^R \tilde \phi_{k,\theta,R} D_j \tilde F]\\
&\quad +
\frac{1}{2}\Ee\left[\int_{\R^d}
(\tilde \phi_{k,\theta,R}(\tau_z \w)-\tilde \phi_{k,\theta,R}(\w))(\tilde F(\tau_z \w)-\tilde F(\w))\nu(z)\,dz\right]\\
&=-\Ee[\tilde b_k\tilde F],
\end{split}
\end{equation} where we also used the anti-symmetry of $\{\tilde H_{jl}\}_{1\le j,l\le d}$, the fact that $\rho_R(s)=-\rho_R(-s)$ for all $s\in \R$ and the integration by parts formula \eqref{e1-2a}.

According to the fact $\tilde \phi_{k,\theta,R}\in \HH_1$ and the standard approximation procedure
(also due to the fact that $\tilde b_j^R$ is bounded), we get by \eqref{e:comment} that
\begin{equation}\label{e:formula1}\begin{split}
\sum_{j=1}^d\Ee[\tilde b_j^R \tilde \phi_{k,\theta,R} D_j \tilde \phi_{k,\theta,R}]=0.
\end{split}
\end{equation}
Then, taking $\tilde F=\tilde \phi_{k,\theta,R}$ in \eqref{t2-1-4} (noting that the test function
$\tilde \phi_{k,\theta,R} \in \HH_1$) and using \eqref{e:formula1}, we further derive that
\begin{align*}
&  \theta\Ee[|\tilde \phi_{k,\theta,R}|^2]+\frac{1}{2}
\Ee\left[\int_{\R^d}
(\tilde \phi_{k,\theta,R}(\tau_z \w)-\tilde \phi_{k,\theta,R}(\w))^2\nu(z)\,dz\right]+
\theta\sum_{j=1}^d\Ee[|D_j \tilde \phi_{k,\theta,R}|^2]=-\Ee[\tilde b_k \tilde \phi_{k,\theta,R}].
\end{align*}

Under Assumption \ref{a1-2}, we obtain that for every $\delta\in (0,1)$,
\begin{align*}
&  |\Ee[\tilde b_k \tilde \phi_{k,\theta,R}]|=\left|\sum_{l=1}^d \Ee[D_l \tilde H_{kl}\tilde \phi_{k,\theta,R}]\right|\\
&=\left|-\sum_{l=1}^d\Ee\left[i\int_{\R^d}\xi_l\,[U(d\xi)\tilde H_{kl}\cdot \tilde \phi_{k,\theta,R}]\right]\right|\le \sum_{l=1}^d \int_{\R^d}|\xi_l||\Ee[U(d\xi)\tilde H_{kl}\cdot \tilde \phi_{k,\theta,R}]|\\
&\le \delta\int_{\R^d}\left(|\xi|^2 \I_{\{|\xi|\le 1\}}+|\xi|^\alpha\I_{\{|\xi|>1\}}\right)\,\Ee[
U(d\xi)\tilde \phi_{k,\theta,R}\cdot \tilde \phi_{k,\theta,R} ]+c_3(\delta,d)\sum_{l=1}^d\int_{\R^d}\left(1+|\xi|^2\right)\,
\Ee[U(d\xi)\tilde H_{kl}\cdot \tilde H_{kl} ]\\
&\le c_4\delta \Ee\left[\int_{\R^d}
(\tilde \phi_{k,\theta,R}(\tau_z \w)-\tilde \phi_{k,\theta,R}(\w))^2\nu(z)\,dz\right]+c_4c_3(\delta,d)\sum_{l=1}^d\|\tilde H_{kl}\|_{\HH_1}^2.
\end{align*}
Here $c_3(\delta,d)>0$ may depend on $\delta$ and $d$, $c_4>0$ is independent of $\delta$,
$U(d\xi)$ denotes the projection valued measure on $\HH$ associated with the unitary operator $\{T_x\}_{x\in \R^d}$ defined by \eqref{e1-1a}, in the second inequality
we have used Young's inequality, and the last inequality follows from \eqref{l1-1-1}, \eqref{e1-4} and \eqref{e1-6a}.

Putting all the estimates above together and choosing $\delta$ small enough, we arrive at that for $1\le k \le d$,
\begin{align*}
 \theta\Ee[|\tilde \phi_{k,\theta,R}|^2]+\frac{1}{4}
\Ee\left[\int_{\R^d}
(\tilde \phi_{k,\theta,R}(\tau_z \w)-\tilde \phi_{k,\theta,R}(\w))^2\nu(z)\,dz\right]+
\theta\sum_{j=1}^d\Ee[|D_j \tilde \phi_{k,\theta,R}|^2]
\le c_5\sum_{l=1}^d\|\tilde H_{kl}\|_{\HH_1}^2.
\end{align*}
In particular,
\begin{equation}\label{t2-1-5}
\begin{split}
&\sup_{\theta\in (0,1),R\ge 1}\theta\Ee\left[|\tilde \phi_{k,\theta,R}|^2\right]\le c_5\sum_{l=1}^d\|\tilde H_{kl}\|_{\HH_1}^2,\quad
\sup_{\theta\in (0,1),R\ge 1}\theta\left(\sum_{j=1}^d\Ee[|D_j \tilde\phi_{k,\theta,R}|^2]\right)\le c_5\sum_{l=1}^d\|\tilde H_{kl}\|_{\HH_1}^2,\\
&\sup_{\theta\in (0,1),R\ge 1}\Ee\left[\int_{\R^d}
(\tilde \phi_{k,\theta,R}(\tau_z \w)-\tilde \phi_{k,\theta,R}(\w))^2\nu(z)\,dz\right]\le 4c_5\sum_{l=1}^d\|\tilde H_{kl}\|_{\HH_1}^2.
\end{split}
\end{equation}

(2) Define $\phi_{\theta,R}=\left(\phi_{1,\theta,R},\cdots,\phi_{d,\theta,R}\right)$ by
\begin{align}\label{t2-1-5a}
\phi_{k,\theta,R}(x;\w)=\tilde \phi_{k,\theta,R}(\tau_x \w)-\tilde \phi_{k,\theta,R}(\w),\quad (x,\w)\in \R^d\times \Omega,\ 1\le k \le d.
\end{align}
It is easy to verify that $\Ee\left[\phi_{k,\theta,R}(x;\w)\right]=0$ for every $x\in \R^d$, and
\begin{equation}\label{t2-1-6}
\phi_{k,\theta,R}(x+z;\w)-\phi_{k,\theta,R}(x;\w)=\phi_{k,\theta,R}(z;\tau_x \w),\quad x,z\in \R^d,\ \w\in \Omega.
\end{equation}
This along with \eqref{t2-1-5} yields that
\begin{equation}\label{t2-1-6a}
\begin{split}
\sup_{\theta\in (0,1),R\ge 1}\Ee\left[\int_{\R^d}|\phi_{k,\theta,R}(z;\w)|^2\nu(z)\,dz\right]&=
\sup_{\theta\in (0,1),R\ge 1}\Ee\left[\int_{\R^d}\left|\tilde \phi_{k,\theta,R}(\tau_z\w)-\tilde \phi_{k,\theta,R}(\w)\right|^2\nu(z)\,dz\right]<\infty.
\end{split}
\end{equation} In particular, since $\nu(z)= |z|^{-d-\alpha}\ge1$ with $|z|\le1$, it holds that
$$\sup_{\theta\in (0,1),R\ge 1}\Ee\left[\int_{\{|z|\le 1\}}|\phi_{k,\theta,R}(z;\w)|^2 \,dz\right]\le \sup_{\theta\in (0,1),R\ge 1}\Ee\left[\int_{\R^d}|\phi_{k,\theta,R}(z;\w)|^2\nu(z)\,dz\right]<\infty.$$
Furthermore, by \eqref{t2-1-6}, we find
\begin{align*}
&|B(0,1)|\cdot\left(\sup_{\theta\in (0,1),R\ge 1}\int_{B(0,1)+B(0,1)}\int_\Omega |\phi_{\theta, R}(z;\w)|^2\,\Pp(d\w)\,dz\right)\\
&=\sup_{\theta\in (0,1),R\ge1}\int_{B(0,1)}\int_{B(0,1)}\int_\Omega |\phi_{\theta, R}(z_1+z_2;\w)|^2\,\Pp(d\w)\,dz_1\,dz_2\\
&=\sup_{\theta\in (0,1),R\ge1}\int_{B(0,1)}\int_{B(0,1)}\int_\Omega |\phi_{\theta, R}(z_2;\w)+\phi_{\theta, R}(z_1;\tau_{z_2}\w)|^2\,\Pp(d\w)\,dz_1\,dz_2\\
&\le 2|B(0,1)|\left(\sup_{\theta\in (0,1),R\ge1}\int_{B(0,1)}\int_\Omega |\phi_{\theta, R}(z_2;\w)|^2\,\Pp(d\w)\,dz_2+\sup_{\theta\in (0,1),R\ge1}\int_{B(0,1)}\int_\Omega |\phi_{\theta, R}(z_1;\w)|^2\,\Pp(d\w)\,dz_1\right)\\
&<\infty,
\end{align*}where we used the stationary property of the transformation $\{\tau_x\}_{x\in \R^d}$, and
$$B(0,1)+B(0,1):=\{z\in \R^d:\ \text{there\ exist}\ z_1,z_2\in B(0,1)\ \text{such\ that}\ z=z_1+z_2\}.$$
 Applying the estimate above iteratively, we can obtain that, for every bounded subset $D\subset \R^d$,
\begin{equation}\label{t2-1-8}
\begin{split}
&\sup_{\theta\in (0,1),R\ge 1}\int_D\int_{\Omega}|\phi_{k,\theta,R}(z;\w)|^2\,\Pp(d\w)\,dz<\infty.\end{split}
\end{equation}

On the other hand, according to \eqref{t2-1-6}, it holds that for every bounded subset $D\subset \R^d$,
\begin{equation}\label{t2-1-8a}
\begin{split}
& \sup_{\theta\in (0,1),R\ge 1}\int_{\Omega}\left(\int_D\int_{\R^d}
\left|\phi_{k,\theta,R}(x+z;\w)-\phi_{k,\theta,R}(x;\w)\right|^2\nu(z)\,dz\,\,dx\right)\Pp(d\w)\\
&=\sup_{\theta\in (0,1),R\ge 1}
 \int_{D}\int_{\Omega}\int_{\R^d}\left|  \phi_{k,\theta,R}(z;\tau_x\w)\right|^2\nu(z)\,dz\,\Pp(d\w)\,dx\\
 &=|D|\cdot \sup_{\theta\in (0,1),R\ge 1}\int_{\Omega}\int_{\R^d}\left|  \phi_{k,\theta,R}(z;\w)\right|^2\,\nu(z)\,dz\,\Pp(d\w)<\infty.
\end{split}
\end{equation} In particular,
\begin{align*}&\sup_{\theta\in (0,1),R\ge 1}\int_{\Omega}\left(\int_D\int_{\{|z|\le 1\}}\left|\phi_{k,\theta,R}(x+z;\w)-\phi_{k,\theta,R}(x;\w)\right|^2 \frac{1}{|z|^{d+\alpha}}\,dz\,dx\right)\,\Pp(d\w)\\
&\le |D|\cdot \sup_{\theta\in (0,1),R\ge 1}\int_\Omega\int_{\R^d}\left|  \phi_{k,\theta,R}(z;\w)\right|^2\nu(z)\,dz\,\Pp(d\w)<\infty.\end{align*}

Therefore,
$\{\phi_{\theta,R}\}_{\theta\in (0,1),R\ge 1}$ is weakly compact in $L^2 (\Omega, W_{loc}^{\alpha/2,2}(\R^d,\R^d);\Pp )$, and so
there exist a subsequence $\{\phi_{\theta_m,R_m}\}_{m\ge 1}$ (with $\theta_m \to 0$ and $R_m \to \infty$ as $m\to \infty$) and  $\phi\in L^2 (\Omega, W_{loc}^{\alpha/2,2}(\R^d,\R^d);\Pp)$ such that
$\phi_{\theta_m,R_m}$ converges weakly to $\phi$ in $L^2 (\Omega, W_{loc}^{\alpha/2,2}(\R^d,\R^d);\Pp )$ as $m \to \infty$.
Furthermore, by \eqref{t2-1-6a}, without loss of generality, we can also conclude that $\phi_{\theta_m,R_m}$ converges weakly to $\phi$ in
$L^2(\R^d\times\Omega, \R^d;\nu(z)\,dz\times\Pp)$ as $m \to \infty$. In particular,
$\phi\in L^2(\Omega, W_{loc}^{\alpha/2,2}(\R^d,\R^d);\Pp)\cap L^2(\R^d\times\Omega, \R^d;\nu(z)\,dz\times\Pp)$.
Using \eqref{t2-1-6} and following the approximation argument in the proof of \cite[Proposition 4.3]{PZ}, we know that
the co-cycle property \eqref{t2-1-1a} still holds for $\phi(x;\w)$, and that $\Ee\left[\phi(x;\w)\right]=0$ for every
$x\in \R^d$. Thus, from \eqref{t2-1-6a}, \eqref{t2-1-8} and \eqref{t2-1-8a}, we can prove \eqref{t2-1-2} and \eqref{t2-1-3}.

(3) In this part, we are going to prove \eqref{t2-1-1}. Let $\tilde G\in \D$. Taking $\tilde F(\w):=\tilde G(\tau_{-x}\w)$ in \eqref{t2-1-4}, we  obtain that
for every $f\in C_c^1(\R^d)$ and $1\le k \le d$,
\begin{equation}\label{t2-1-7}
\begin{split}
& \theta_m \int_{\Omega}\left(\int_{\R^d} \tilde \phi_{k,\theta_m,R_m}(\w)\tilde G(\tau_{-x}\w)f(x)\,dx\right)\,\Pp(d\w)\\
&+\theta_m\sum_{j=1}^d\int_{\Omega}\left(\int_{\R^d}D_j\tilde \phi_{k,\theta_m,R_m}(\w) f(x) D_j \tilde G(\tau_{-x}\w)\,dx\right)\,\Pp(d\w)\\
&+
\frac{1}{2}\int_{\Omega}\left(\int_{\R^d}\int_{\R^d}(\tilde \phi_{k,\theta_m,R_m}(\tau_z \w)-\tilde \phi_{k,\theta_m,R_m}(\w))
(\tilde G(\tau_{-x+z}\w)-\tilde G(\tau_{-x}\w))\nu(z)\,dzf(x)\,dx\right)\,\Pp(d\w)\\
&-\sum_{j=1}^d \int_{\Omega}\left(\int_{\R^d}\tilde b_{j}^{R_m}(\w)D_j\tilde \phi_{k,\theta_m,R_m}(\w)f(x)\tilde G(\tau_{-x}\w)\,dx\right)\,\Pp(d\w)\\
&=-
\int_{\Omega}\left(\int_{\R^d}\tilde b_k(\w)\tilde G(\tau_{-x}\w)f(x)\,dx\right)\,\Pp(d\w),
\end{split}
\end{equation}
where we used the integration by parts formula \eqref{e1-2a}, the anti-symmetry of $\{\tilde H_{jl}\}_{1\le j,l\le d}$ and the fact that $\rho_R(s)=-\rho_R(-s)$ for all $s\in \R$.

According to \eqref{t2-1-5},  for all $1\le k,j\le d$, we have
\begin{equation*}
\lim_{m \to \infty}\theta_m \int_{\Omega}\left(\int_{\R^d} \tilde \phi_{k,\theta_m,R_m}(\w)f(x)\tilde G(\tau_{-x}\w)\,dx\right)\,\Pp(d\w)=0
\end{equation*} and
\begin{equation*}
\lim_{m \to \infty}\theta_m \int_{\Omega}\left(\int_{\R^d}D_j\tilde \phi_{k,\theta_m,R_m}(\w) f(x) D_j \tilde G(\tau_{-x}\w)\,dx\right)\,\Pp(d\w)=0.
\end{equation*}
On the other hand, it holds that
\begin{align*}
& \int_{\Omega}\left(\int_{\R^d}\int_{\R^d}(\tilde \phi_{k,\theta_m,R_m}(\tau_z \w)-\tilde \phi_{k,\theta_m,R_m}(\w))
(\tilde G(\tau_{-x+z}\w)-\tilde G(\tau_{-x}\w))\nu(z)f(x)\,dz\,dx\right)\,\Pp(d\w)\\
&=\lim_{\delta \downarrow 0}\Bigg(\int_{\Omega}\left(\int_{\R^d}\int_{\{|z|> \delta\}}(\tilde \phi_{k,\theta_m,R_m}(\tau_z \w)-\tilde \phi_{k,\theta_m,R_m}(\w))
\tilde G(\tau_{-x+z}\w)\nu(z)f(x)\,dz\,dx\right)\,\Pp(d\w)\\
&\qquad\qquad -\int_{\Omega}\left(\int_{\R^d}\int_{\{|z|>\delta\}}(\tilde \phi_{k,\theta_m,R_m}(\tau_z \w)-\tilde \phi_{k,\theta_m,R_m}(\w))
\tilde G(\tau_{-x}\w)\nu(z)f(x)\,dz\,dx\right)\,\Pp(d\w)\Bigg)\\
&=:\lim_{\delta \downarrow 0}\left(I_{1,\delta}-I_{2,\delta}\right).
\end{align*}
Using the stationary of the transformation $\{\tau_{x}\}_{x\in \R^d}$, we get
\begin{align*}
I_{1,\delta}&=\int_{\Omega}\left(\int_{\R^d}\int_{\{|z|> \delta\}}(\tilde \phi_{k,\theta_m,R_m}(\tau_{x} \w)-\tilde \phi_{k,\theta_m,R_m}(\tau_{x-z}\w))
\tilde G(\w)\nu(z)f(x)\,dz\,dx\right)\,\Pp(d\w)\\
&=\int_{\Omega}\left(\int_{\R^d}\int_{\{|z|> \delta\}}(\tilde \phi_{k,\theta_m,R_m}(\tau_{x+z} \w)-\tilde \phi_{k,\theta_m,R_m}(\tau_{x}\w))
\tilde G(\w)\nu(z)f(x+z)\,dz\,dx\right)\,\Pp(d\w),
\end{align*}
where the second equality is due to the change of variable $\tilde x=x-z$.
By the stationary of the transformation $\{\tau_{x}\}_{x\in \R^d}$ again, it is easy to see that
\begin{align*}
I_{2,\delta}=\int_{\Omega}\left(\int_{\R^d}\int_{\{|z|> \delta\}}(\tilde \phi_{k,\theta_m,R_m}(\tau_{x+z} \w)-\tilde \phi_{k,\theta_m,R_m}(\tau_{x}\w))
\tilde G(\w)\nu(z)f(x)\,dz\,dx\right)\,\Pp(d\w).
\end{align*}
Thus, putting all the estimates above together yields that
\begin{align*}
&\int_{\Omega}\left(\int_{\R^d}\int_{\R^d}(\tilde \phi_{k,\theta_m,R_m}(\tau_{z} \w)-\tilde \phi_{k,\theta_m,R_m}(\w))
(\tilde G(\tau_{-x+z}\w)-\tilde G(\tau_{-x}\w))\nu(z)f(x)\,dz\,dx\right)\,\Pp(d\w)\\
&=\int_{\Omega}\left(\int_{\R^d}\int_{\R^d}(\tilde \phi_{k,\theta_m,R_m}(\tau_{x+z} \w)-\tilde \phi_{k,\theta_m,R_m}(\tau_x\w))
(f(x+z)-f(x))\nu(z)\,dz\,dx\right)\tilde G(\w)\,\Pp(d\w)\\
&=\int_{\Omega}\left(\int_{\R^d} \int_{\R^d}\phi_{k,\theta_m,R_m}(z;\tau_x\w)
(f(x+z)-f(x))\nu(z)\,dz \,dx\right)\tilde G(\w)\,\Pp(d\w),
\end{align*}
where in the last equality we used \eqref{t2-1-5a}.
Since $\phi_{\theta_m,R_m}$ converges weakly to $\phi$ in
$L^2\left(\R^d\times\Omega, \R^d;\nu(z)dz\times\Pp\right)$ as $m \to \infty$, we have
for every $f\in C_c^1(\R^d)$ and $x\in \R^d$,
\begin{align*}
&\lim_{m \to \infty}\int_{\Omega}\left(\int_{\R^d}\phi_{k,\theta_m,R_m}(z;\tau_x\w)
(f(x+z)-f(x))\nu(z)\,dz\right)\tilde G(\w)\,\Pp(d\w)\\
&=\int_{\Omega}\left(\int_{\R^d}\phi_k(z;\tau_x\w)
(f(x+z)-f(x))\nu(z)\,dz \right)\tilde G(\w)\,\Pp(d\w)\\
&=\int_{\Omega}\left(\int_{\R^d} (\phi_k(x+z;\w)-\phi_k(x;\w) )
(f(x+z)-f(x))\nu(z)\,dz\right) \tilde G(\w)\,\Pp(d\w),
\end{align*} where in the last equality we used the co-cycle property \eqref{t2-1-1a}. By \eqref{t2-1-6a} and the stationary of the transformation $\{\tau_x\}_{x\in \R^d}$, it is not difficult to verify that for every bounded subset $D \subset \R^d$,
\begin{align*}
\sup_{m\ge 1}\int_D\left(\int_{\Omega} \int_{\R^d}\phi_{k,\theta_m,R_m}(z;\tau_x\w)
 (f(x+z)-f(x))\nu(z)\,dz \tilde G(\w)\,\Pp(d\w)\right)^2\,dx<\infty
\end{align*} and
\begin{align*}
\lim_{R \uparrow \infty}\sup_{m\ge 1}\int_{\{|x|>R\}}\left(\int_{\Omega} \int_{\R^d}|\phi_{k,\theta_m,R_m}(z;\tau_x\w)|
|f(x+z)-f(x)|\nu(z)\,dz \tilde G(\w)\,\Pp(d\w)\right)\,dx=0.
\end{align*}
With all the properties above, we can  apply the routine limit arguments to deduce that for every $f\in C_c^1(\R^d)$,
\begin{align*}
& \lim_{m \to \infty}\int_{\Omega}\left(\int_{\R^d}\int_{\R^d} (\tilde \phi_{k,\theta_m,R_m}(\tau_{z} \w)-\tilde \phi_{k,m}(\w))
(\tilde G(\tau_{-x+z}\w)-\tilde G(\tau_{-x}\w))\nu(z)f(x)\,dz\,dx\right)\,\Pp(d\w)\\
&=\lim_{m \to \infty}\int_{\R^d} \left(\int_{\Omega} \int_{\R^d}\phi_{k,\theta_m,R_m}(z;\tau_x\w)
\left(f(x+z)-f(x)\right)\nu(z)\,dz \tilde G(\w)\,\Pp(d\w)\right)\,dx\\
&=\int_{\Omega} \left(\int_{\R^d}\int_{\R^d}\left(\phi_{k}(x+z;\w)-\phi_k(x;\w)\right)
\left(f(x+z)-f(x)\right)\nu(z)\,dz \,dx\right)\tilde G(\w)\,\Pp(d\w).
\end{align*}

Note that $\tilde \phi_{\theta_m,R_m}\in \HH_1$, so $\phi_{\theta_m,R_m}(\cdot;\w)\in W_{loc}^{1,2}(\R^d,\R^d)$ and
\begin{align*}
D_j \tilde \phi_{\theta_m,R_m}(\tau_x \w)=-\frac{\partial \phi_{\theta_m,R_m}(\cdot;\w)}{\partial x_j}(x).
\end{align*}
Then, by the integration by parts formula and the anti-symmetry of $\{\tilde H_{jl}\}_{1\le j,l\le d}$, we deduce that for every $f\in C_c^{1}(\R^d)$ and $\tilde G\in \D$,
\begin{align*}
& \int_{\Omega}\left(\int_{\R^d}\tilde b_{j}^{R_m}(\w)D_j\tilde \phi_{k,\theta_m,R_m}(\w)\tilde G(\tau_{-x}\w)f(x)\,dx\right)\,\Pp(d\w)\\
&=\int_{\Omega}\left(\int_{\R^d}\tilde b_{j}^{R_m}(\tau_x\w)D_j\tilde \phi_{k,\theta_m,R_m}(\tau_x\w)f(x)\,dx\right)\tilde G(\w)\,\Pp(d\w)\\
&=-\int_{\Omega}\left(\int_{\R^d}\tilde b_{j}^{R_m}(\tau_x\w)\frac{\partial \phi_{k,\theta_m,R_m}(x;\w)}{\partial x_j}f(x)\,dx\right)\tilde G(\w)\,\Pp(d\w)\\
&=\int_{\Omega}\left(\int_{\R^d}b_{j}^{R_m}(x;\w)\phi_{k,\theta_m,R_m}(x;\w)\frac{\partial f(x)}{\partial x_j}\,dx\right)\tilde G(\w)\,\Pp(d\w),
\end{align*} where $b_{j}^{R_m}(x;\w)=\tilde b_{j}^{R_m}(\tau_x\w).$
On the other hand, according to the definition of $b^R(x;\w)$ and the fact that $\tilde H_{jl}\in \HH_1$ for all $1\le j,l\le d$, it is easy to verify that for every bounded subset $D\subset \R^d$,
\begin{align*}
\lim_{m \to \infty}\int_{\Omega}\left(\int_D|b(x;\w)-b^{R_m}(x;\w)|^2\,dx\right)\,\Pp(d\w)=0.
\end{align*}
Therefore, for every $f\in C_c^{1}(\R^d)$ and $\tilde G\in \D$,
\begin{align*}
&\lim_{m \to \infty}\int_{\Omega}\left(\int_{\R^d}\tilde b_{j}^{R_m}(\w)D_j\tilde \phi_{k,\theta_m,R_m}(\w)f(x)\tilde G(\tau_{-x}\w)\,dx\right)\,\Pp(d\w)\\
&=\lim_{m \to \infty}\int_{\Omega}\left(\int_{\R^d}b^{R_m}_{j}(x;\w)\phi_{k,\theta_m,R_m}(x;\w)\frac{\partial f(x)}{\partial x_j}\,dx\right)\tilde G(\w)\,\Pp(d\w)\\
&=\lim_{m \to \infty}\int_{\Omega}\left(\int_{\R^d}b_{j}(x;\w)\phi_{k,\theta_m,R_m}(x;\w)\frac{\partial f(x)}{\partial x_j}\,dx\right)\tilde G(\w)\,\Pp(d\w)\\
&=\int_{\Omega}\left(\int_{\R^d}b_{j}(x;\w)\phi_k(x;\w)\frac{\partial f(x)}{\partial x_j}\,dx\right)\tilde G(\w)\,\Pp(d\w),
\end{align*}
where the last equality is due to the fact that $\phi_{\theta_m,R_m}$ converges weakly to $\phi$ in $L^2 (\Omega, W_{loc}^{\alpha/2,2}(\R^d,\R^d);\Pp )$ as $m\to \infty$.

Putting all the estimates above together into \eqref{t2-1-7} and letting $m \to \infty$, we will obtain that
for every $f\in C_c^{1}(\R^d)$ and $\tilde G\in \D$,
\begin{align*}
&\frac{1}{2}\int_{\Omega}\left(\int_{\R^d}\int_{\R^d}(\phi_{k}(x+z;\w)-\phi_k(x;\w))
(f(x+z)-f(x))\nu(z)\,dz\,dx\right)\tilde G(\w)\,\Pp(d\w)\\
&-\sum_{j=1}^d \int_{\Omega}\left(\int_{\R^d}b_{j}(x;\w)\phi_{k}(x;\w)\frac{\partial f(x)}{\partial x_j}\,dx\right)\tilde G(\w)\,\Pp(d\w)=
-\int_{\Omega}\left(\int_{\R^d} b_k(x;\w)f(x)\,dx\right)\tilde G(\w)\,\Pp(d\w),
\end{align*} where we also used the stationary of the transformation $\{\tau_x\}_{x\in \R^d}$ in the right hand side of the equality above.
Note that the equation above holds for every $\tilde G \in \D$.
By choosing a dense subset of test functions in $C_c^1(\R^d)$, we can find a common $\Pp$-null set $\Lambda \subset \Omega$ such that
\eqref{t2-1-1} holds for every $f\in C_c^1(\R^d)$ and $\w\notin \Lambda$. The proof is complete.
\end{proof}

\subsection{Homogenized coefficients}

\begin{lemma}\label{l1-2} Suppose that Assumptions $\ref{a1-2--}$ and $\ref{a1-2}$ hold.
Let $\bar A:=
\{\bar a_{jk}\}_{1\le j,k\le d}$ be the $d\times d$ matrix defined by \eqref{e1-7}. Then,
$\bar A$ is strictly positive definite and bounded.
\end{lemma}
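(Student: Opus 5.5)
For $\xi\in\R^d$ we consider the quadratic form
\[
\langle \bar A\xi,\xi\rangle=\Ee\left[\int_{\R^d}\big(\langle\xi,z\rangle+\langle \xi,\phi(z;\w)\rangle\big)^2\nu(z)\,dz\right]
\]
and prove matching upper and lower bounds. This identity follows directly from \eqref{e1-7}, since $\bar a_{jk}$ is bilinear in the vectors $z+\phi(z;\w)$. It also shows at once that $\bar A$ is symmetric and nonnegative definite.

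Boundedness is the easy part. By $(a+b)^2\le 2a^2+2b^2$,
\[
\langle \bar A\xi,\xi\rangle\le 2|\xi|^2\int_{\R^d}|z|^2\nu(z)\,dz+2|\xi|^2\,\Ee\left[\int_{\R^d}|\phi(z;\w)|^2\nu(z)\,dz\right].
\]
The first integral is finite by Assumption \ref{a1-2--}(i)(ii): $\alpha<2$ controls the small jumps and the second-moment condition controls the large jumps. The second term is finite by \eqref{t2-1-2} in Theorem \ref{t2-1}. Cauchy--Schwarz applied to the entries then gives $|\bar a_{jk}|<\infty$.

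Strict positivity takes more work. Suppose $\langle\bar A\xi,\xi\rangle=0$ for some $\xi\neq0$. Then $\langle\xi,\phi(z;\w)\rangle=-\langle\xi,z\rangle$ for $\nu(z)dz\times\Pp$-a.e.\ $(z,\w)$. Since $\nu>0$ on $B(0,1)$, this identity holds for a.e.\ $z\in B(0,1)$ and a.e.\ $\w$.

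Taking expectations and using $\Ee[\phi(z;\w)]=0$ from Theorem \ref{t2-1}(ii) gives $\langle \xi,z\rangle=0$ for a.e.\ $z\in B(0,1)$. This is impossible for $\xi\ne0$, so $\langle\bar A\xi,\xi\rangle>0$ for every $\xi\neq0$.

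To get a quantitative lower bound $\langle\bar A\xi,\xi\rangle\ge c|\xi|^2$, we argue with a minimum instead. The map $\xi\mapsto\langle\bar A\xi,\xi\rangle$ is a continuous quadratic form and strictly positive on the unit sphere. By compactness its minimum there is positive, and that minimum serves as $c$.

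A more robust alternative avoids the degenerate-case argument. We use Jensen's inequality, $\Ee[Y^2]\ge (\Ee Y)^2$, pointwise in $z$ for the random variable $Y=\langle\xi,z+\phi(z;\w)\rangle$, together with the mean-zero property. This gives
\[
\langle \bar A\xi,\xi\rangle\ge \int_{\R^d}\langle\xi,z\rangle^2\nu(z)\,dz\ge |\xi|^2\int_{\{|z|\le1\}}\frac{z_1^2}{|z|^{d+\alpha}}\,dz,
\]
where the last step uses rotational invariance. The right-hand side is a positive finite multiple of $|\xi|^2$.

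The only delicate point is justifying Fubini and the identity $\Ee[\phi(z;\w)]=0$ for a.e.\ $z$. Both rely on the joint measurability and $L^2(\nu\,dz\times\Pp)$ integrability of $\phi$, which Theorem \ref{t2-1}(ii)(iii) provides.
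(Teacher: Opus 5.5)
Your proof is correct, and its main line is exactly the paper's argument: you write $\langle\bar A\xi,\xi\rangle$ as $\Ee\bigl[\int_{\R^d}\langle\xi,z+\phi(z;\w)\rangle^2\nu(z)\,dz\bigr]$, bound it above using $\int_{\R^d}|z|^2\nu(z)\,dz<\infty$ and \eqref{t2-1-2}, and rule out degeneracy by taking expectations on $B(0,1)$ with $\Ee[\phi(z;\w)]=0$. Your ``a.e.\ $z\in B(0,1)$'' is in fact slightly more careful than the paper's ``any $z$''. Your Jensen alternative is also valid and gives a bit more than the paper records: expanding the square and using the mean-zero property yields $\langle\bar A\xi,\xi\rangle=\int_{\R^d}\langle\xi,z\rangle^2\nu(z)\,dz+\Ee\bigl[\int_{\R^d}\langle\xi,\phi(z;\w)\rangle^2\nu(z)\,dz\bigr]$, hence the explicit bound $\bar A\ge\int_{\R^d}z\otimes z\,\nu(z)\,dz$ without any contradiction or compactness step.
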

\begin{proof}
For every non-zero $\xi:=\left(\xi_1,\cdots,\xi_d\right)\in \R^d$,
\begin{align*}
\sum_{j,k=1}^d \bar a_{jk}\xi_j\xi_k&=
\frac{1}{2}\Ee\left[\int_{\R^d}\sum_{j,k=1}^d \xi_j\xi_k\left(z_j+\phi_j(z;\w)\right)\left(z_k+\phi_k(z;\w)\right)\nu(z)\,dz\right]\\
&=\frac{1}{2}\Ee\left[\int_{\R^d}\bigg(\sum_{j=1}^d \xi_j(z_j+\phi_j(z;\w))\bigg)^2\nu(z)\,dz\right]\ge 0.
\end{align*} This along with \eqref{t2-1-2} implies that $\bar A$ is positive definite and bounded.

Next, we suppose that $\sum_{j,k=1}^d \bar a_{jk}\xi_j\xi_k=0$ for some non-zero $\xi\in \R^d$. Then, for a.s.\ $\w\in \Omega$ and any $z\in B(0,1)$
\begin{align}\label{l1-2-1}
\sum_{j=1}^d\xi_j\left(z_j+\phi_j(z;\w)\right)=0.
\end{align}
Meanwhile, by Theorem \ref{t2-1}, $\Ee[\phi(x;\w)]=0$ and so for all $z\in B(0,1)$,
\begin{align*}
\Ee\left[\sum_{j=1}^d\xi_j\left(z_j+\phi_j(z;\w)\right)\right]=\langle \xi, z\rangle,
\end{align*}
which contradicts with \eqref{l1-2-1}. Therefore, we conclude that $\sum_{j,k=1}^d \bar a_{jk}\xi_j\xi_k>0$ for every non-zero $\xi\in \R^d$, and so
$\bar A$ is strictly positive definite.
\end{proof}

\section{The scaled resolvent equation and properties of the corrector}\label{section3}
This section consists two parts. In the first subsection, we prove the existence of weak solution to the scaled resolvent equation \eqref{e1-6}, where the proof is partly inspired by that of Theorem \ref{t2-1}. In the second one, we will establish some regularity properties and estimates for the corrector $\phi$ to the equation \eqref{e:equ-01}, which are key ingredients to the proof of Theorem \ref{t1-2}.
\subsection{The existence of solution to the scaled resolvent equation \eqref{e1-6}}

\begin{proposition}\label{t1-1} Suppose that Assumptions $\ref{a1-2--}$ and $\ref{a1-2}$ hold.
Then, for every $h\in \mathscr{G}_\lambda$ with $\lambda>0$ and every $\e\in (0,1)$, there exists a weak solution $u^{\e,\w}:=u^\e(\cdot;\w)\in W^{\alpha/2,2}(\R^d)$ to the equation
\eqref{e1-6} such that the following statements hold for a.e. $\w\in \Omega$:
\begin{itemize}
\item [(i)] for every $f\in C_c^1(\R^d)$,
\begin{equation}\label{t1-1-0}
\begin{split}
& \frac{\e^{-d-2}}{2}\int_{\R^d}\left(u^{\e}(x+z;\w)-u^{\e}(x;\w)\right)(f(x+z)-f(x))\nu\left(\frac{z}{\e}\right)\,dz\\
&+\e^{-1}\sum_{j=1}^d \int_{\R^d}b_{j}\left(\frac{x}{\e};\w\right)\frac{\partial f(x)}{\partial x_j}
u^{\e}(x;\w)\,dx+\lambda\int_{\R^d}u^{\e}(x;\w)f(x)\,dx=\int_{\R^d}h(x)f(x)\,dx,
\end{split}
\end{equation}
\item [(ii)]
\begin{equation}\label{t1-1-1a}
\sup_{\e\in (0,1)}\|u^{\e}(\cdot;\w)\|_{L^\infty(\R^d)}<\infty,\quad \sup_{\e\in (0,1)}\|u^{\e}(\cdot;\w)\|_{L^2(\R^d)}<\infty,
\end{equation} and
\begin{equation}\label{t1-1-0a}
\sup_{\e\in (0,1)}
\e^{-d-2}\int_{\R^d}\int_{\R^d}\left(u^{\e}(x+z;\w)-u^{\e}(x;\w)\right)^2\nu\left(\frac{z}{\e}\right)\,dz\,dx<\infty.
\end{equation}
\end{itemize}
\end{proposition}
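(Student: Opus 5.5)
We construct $u^\e$ by approximation, as in the proof of Theorem \ref{t2-1}, working pathwise in $\w$. Fix $\e\in(0,1)$ and a typical $\w$. For $\theta\in(0,1)$ and $R\ge1$ we replace the drift by the truncated field $b^R(x;\w)=\tilde b^R(\tau_x\w)$ from \eqref{t1-1-2}. This field is bounded and divergence-free. The divergence-free property follows from the anti-symmetry of $\rho_R(\tilde H_{jl})$, exactly as in \eqref{e1-5}. We then consider the regularized equation
\[
\lambda u-L^\e_0u-\theta\Delta u-\e^{-1}\langle b^R(\cdot/\e;\w),\nabla u\rangle=h .
\]
The associated bilinear form on $W^{1,2}(\R^d)$ is bounded and coercive. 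Coercivity holds because the drift contributes $\int\langle b^R,\nabla u\rangle u\,dx=\frac12\int\langle b^R,\nabla(u^2)\rangle\,dx=0$, by the divergence-free property and an approximation argument. Lax--Milgram therefore gives a unique weak solution $u_{\theta,R}\in W^{1,2}(\R^d)$. Testing with $u_{\theta,R}$ itself, the drift term vanishes, and we obtain
\[
\lambda\|u_{\theta,R}\|_{L^2}^2+\theta\|\nabla u_{\theta,R}\|_{L^2}^2+\frac{\e^{-d-2}}{2}\iint(u_{\theta,R}(x+z)-u_{\theta,R}(x))^2\nu(z/\e)\,dz\,dx\le\lambda^{-1}\|h\|_{L^2}^2 .
\]
Here $h\in W^{1,2}\subset L^2$, so the right side is finite. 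This bound is uniform in $\theta$, $R$, $\e$ and $\w$, and it gives the second half of \eqref{t1-1-1a} together with \eqref{t1-1-0a}.

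For the $L^\infty$ bound we use a maximum principle, i.e.\ a Stampacchia truncation. We test with $(u_{\theta,R}-M)^+$, where $M=\lambda^{-1}\|h\|_{L^\infty}$. The drift term still vanishes, since $\langle b^R,\nabla u\rangle(u-M)^+=\frac12\langle b^R,\nabla((u-M)^+)^2\rangle$ integrates to zero. The nonlocal term is nonnegative by the standard inequality $(a-b)((a-M)^+-(b-M)^+)\ge((a-M)^+-(b-M)^+)^2$. The Laplacian term is also nonnegative. We are left with $\lambda\int(u-M)(u-M)^+\le\int(h-\lambda M)(u-M)^+\le0$, so $u\le M$. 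The symmetric argument gives $u\ge -M$, hence $\|u_{\theta,R}\|_{L^\infty}\le\lambda^{-1}\|h\|_{L^\infty}$. One point needs care here: $(u-M)^+$ is not in $L^2$ when $M>0$ fails to dominate, but since $M>0$ and $u\in L^2$ it is fine; if needed we approximate the test function.

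Next we pass to the limit $\theta\to0$, $R\to\infty$ along a subsequence. The uniform bounds give weak convergence of $u_{\theta_m,R_m}$ in $L^2$ and in the $W^{\alpha/2,2}$-type seminorm, plus weak-$*$ convergence in $L^\infty$, to a limit $u^\e$ that inherits \eqref{t1-1-1a} and \eqref{t1-1-0a} by lower semicontinuity. We test the regularized equation with $f\in C_c^1$ and integrate the drift term by parts using $\mathrm{div}\,b^R=0$, which gives $\e^{-1}\int b^R_j(x/\e)\partial_jf\,u\,dx$.

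The main obstacle is the convergence of this drift term, since $b$ itself is only locally $L^2$ in $x$. On $\mathrm{supp} f$ we have $b^{R_m}(\cdot/\e;\w)\to b(\cdot/\e;\w)$ in $L^2_{loc}$ for a.e.\ $\w$, along a further subsequence. This follows from $\tilde H_{jl}\in\HH_1$ by the same convergence $\Ee\int_D|b-b^{R}|^2dx\to0$ used in the proof of Theorem \ref{t2-1}, combined with a diagonal argument over countably many balls. Strong $L^2_{loc}$ convergence of the drift against weak $L^2$ convergence of $u_{\theta_m,R_m}$ then yields the limit term $\e^{-1}\int b_j(x/\e)\partial_jf\,u^\e\,dx$. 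The $\theta\Delta$ term is $O(\theta^{1/2})$ by the uniform bound on $\theta\|\nabla u\|^2$. The nonlocal term passes to the limit by weak convergence in the weighted space $L^2(\nu(z/\e)\,dz\,dx)$. Finally, we choose a countable dense family of $f$ and a common null set of $\w$, and extend to all $\e$ by the same procedure applied to each $\e$ separately. This is needed because the subsequence may depend on $\e$, but the bounds are uniform, so the statement holds as given.
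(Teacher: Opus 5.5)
Your proof is correct, and it differs from the paper's in two places.

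\textbf{The $L^\infty$ bound.} The paper mollifies the truncated drift, so that $b^R_j*\eta_\theta=-\sum_l\rho_R(H_{jl})*\partial_l\eta_\theta$ is smooth and bounded. It then gets the $L^\infty$ bound from the resolvent representation $u^\e_{\theta,R}=\int_0^\infty e^{-\lambda t}T^{\theta,R,\e,\w}_t h\,dt$ of the associated Markov semigroup. You instead obtain it by Stampacchia truncation inside the weak formulation. This needs neither smoothness of the drift nor an identification of the Lax--Milgram solution with a semigroup resolvent.

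\textbf{The limit in the drift term.} The paper uses the compact embedding $W^{\alpha/2,2}_{loc}\hookrightarrow L^2_{loc}$ to get strong convergence of $u^\e_{\theta_m,R_m}$. It then handles the drift by dominated convergence, using that $\sup_R|b^R(\cdot/\e;\w)|$ is locally bounded by the continuity of $x\mapsto D_l\tilde H_{jl}(\tau_x\w)$ in Assumption \ref{a1-2}(i). You pair merely weak $L^2$ convergence of $u$ with strong $L^2_{loc}$ convergence of $b^{R_m}(\cdot/\e;\w)$. You get the latter from $\Ee|\tilde b-\tilde b^R|^2\to0$, stationarity, and Borel--Cantelli along a deterministic sequence $R_m$. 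This uses only $\tilde H_{jl}\in\HH_1$, not the continuity hypothesis. The scaling identity $\int_D|b(x/\e;\w)-b^R(x/\e;\w)|^2dx=\e^d\int_{D/\e}|b(y;\w)-b^R(y;\w)|^2dy$ shows that a single null set works for all $\e$. That is exactly what the uniform-in-$\e$ statement (ii) requires.

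\textbf{One thing to fix.} The field $b^R_j(x;\w)=\sum_l\rho_R'(H_{jl}(x;\w))\,D_l\tilde H_{jl}(\tau_x\w)$ is in general not bounded on $\R^d$; only $\rho_R(H_{jl})$ is. So boundedness of $b^R$ cannot be used to justify continuity of the bilinear form. Instead, write the drift part of the form as $\sum_{j,l}\int\rho_R(H_{jl}(x/\e;\w))\,\partial_ju\,\partial_lv\,dx$, as the paper does in $\EE^{\e,\w}_{\theta,R}$. This is bounded on $W^{1,2}\times W^{1,2}$. It vanishes for $v=u$, and also for $v=(u-M)^+$, because $\partial_ju\,\partial_l(u-M)^+=\I_{\{u>M\}}\partial_ju\,\partial_lu$ is symmetric in $(j,l)$ while $\rho_R(H_{jl})$ is antisymmetric. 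With this formulation your coercivity, energy and truncation steps go through unchanged.
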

\begin{proof}
(1) Let
\begin{equation}\label{t2-1-9a}
\eta(x)=
\begin{cases}
C_0\exp\left(-\frac{1}{1-|x|^2}\right),\ \ &\ |x|<1,\\
0,\ \ &\ |x|\ge1,
\end{cases}
\end{equation}
where $C_0$ is a positive normalizing constant so that $\displaystyle\int_{\R^d}\eta(x)\,dx=1$.
For every $\delta\in (0,1)$, let $\eta_\delta(x):=\delta^{-d}\eta\left(\frac{x}{\delta}\right)$ be the standard smooth mollifier,
and denote $f*\eta_{\delta}(x):=\displaystyle\int_{\R^d}\eta_{\delta}(x-y)f(y)\,dy$ by the smooth convolution
of the function $f$ with respect to $\eta_{\delta}$.

For every $\theta>0$ and $R\ge 1$, define
$$
L^{\e,\w}_{\theta,R}f(x)=L_0^{\e}f(x)+\theta\Delta f(x)+
\e^{-1}\sum_{j=1}^d b_{j}^R*\eta_\theta\left(\frac{x}{\e};\w\right)\frac{\partial f(x)}{\partial x_j},\quad f\in C_c^2(\R^d),
$$
where $b_j^R(x;\w)=\tilde b_j^R(\tau_x\w)$ with $\tilde b_j^R$ being defined by \eqref{t1-1-2}. Then, by \eqref{e1-3}, the integration by parts formula
and the anti-symmetric property of $\{\tilde H_{jl}\}_{1\le j,l\le d}$ (which implies the anti-symmetry of
$\{\rho_R(\tilde H_{jl})\}_{1\le j,l\le d}$ thanks to the definition of $\rho_R$),
for every $f,g\in C_c^2(\R^d)$,
\begin{align*}
\EE^{\e,\w}_{\theta,R}(f,g):&=
-\int_{\R^d}L_{\theta,R}^{\e,\w}f(x)g(x)\,dx\\
&=\frac{\e^{-d-2}}{2}\int_{\R^d}\left(f(x+z)-f(x)\right)\left(g(x+z)-g(x)\right)\nu\left(\frac{z}{\e}\right)\,dz\\
&\quad+\theta\int_{\R^d}\langle \nabla f(x), \nabla g(x)\rangle\, dx
-\sum_{j,l=1}^d \int_{\R^d}\rho_R\left(H_{jl}\left(\cdot;\w\right)\right)*\eta_\theta\left(\frac{x}{\e}\right)\frac{\partial f(x)}{\partial x_j}\frac{\partial g(x)}{\partial x_l}\,dx,
\end{align*}
where $\rho_R:\R \to \R$ is the cut-off function used in \eqref{t1-1-2}.
This implies that for all $f,g\in C_c^2(\R^d)$,
\begin{equation}\label{t1-1-6}
\begin{split}
|\EE^{\e,\w}_{\theta,R}(g,h)|&\le c_1(\e,\theta,R)\left(\|f\|_{W^{\alpha/2,2}(\R^d)}+\|f\|_{W^{1,2}(\R^d)}\right)
\left(\|g\|_{W^{\alpha/2,2}(\R^d)}+\|g\|_{W^{1,2}(\R^d)}\right)\\
&\le c_2(\e,\theta,R)\|f\|_{W^{1,2}(\R^d)}\|g\|_{W^{1,2}(\R^d)},
\end{split}
\end{equation}
where $c_2(\e,\theta,R)$ is a positive constant that may depend on $\e$, $\theta$ and $R$.
In particular, we can extend
$\EE^{\e,\w}_{\theta,R}$ to a continuous bilinear form on $W^{1,2}(\R^d)\times W^{1,2}(\R^d)$.
On the other hand, by the anti-symmetry of
$\{\rho_R(\tilde H_{jl})\}_{1\le j,l\le d}$ again, we can show that for all $f\in W^{1,2}(\R^d)$,
\begin{equation*}
\EE_{\theta,R}^{\e,\w}(f,f)\ge c_3(\varepsilon,\theta,R)\int_{\R^d}|\nabla f(x)|^2\,dx.
\end{equation*}
Therefore, by the Lax-Milgram theorem (e.g., see \cite[Section 2]{PV}), for any $\lambda>0$ and
$h\in \mathscr{G}_\lambda$,
there exists a unique $u^{\e}_{\theta,R}(\cdot;\w)\in W^{1,2}(\R^d)$ such that for all $ f\in W^{1,2}(\R^d)$,
\begin{equation}\label{t1-1-4a}
\begin{split}
&\frac{\e^{-d-2}}{2}\int_{\R^d}\int_{\R^d}(u_{\theta,R}^{\e}(x+z;\w)-u_{\theta,R}^{\e}(x;\w))(f(x+z)-f(x))\nu\left(\frac{z}{\e}\right)\,dz\,dx\\
&+\theta\int_{\R^d}
\langle \nabla u_{\theta,R}^{\e}(x;\w), \nabla f(x)\rangle \,dx+\e^{-1}\sum_{j=1}^d \int_{\R^d}b_{j}^R*\eta_\theta\left(\frac{x}{\e};\w\right)\frac{\partial f(x)}{\partial x_j}
u_{\theta,R}^{\e}(x;\w)\,dx\\
&
+\lambda\int_{\R^d}u_{\theta,R}^{\e}(x;\w)f(x)\,dx=\int_{\R^d}h(x)f(x)\,dx.
\end{split}
\end{equation}
Taking $f=u_{\theta,R}^{\e}(\cdot;\w)$ in the equation \eqref{t1-1-4a}, and using \eqref{e:formula1} as well as its proof due to  the anti-symmetry of $\{\rho_R(H_{jl})\}_{1\le j,l\le d}$, we have
\begin{align*}
&\frac{\e^{-d-2}}{2}\int_{\R^d}\int_{\R^d}\left(u_{\theta,R}^{\e}(x+z;\w)-u_{\theta,R}^{\e}(x;\w)\right)^2\nu\left(\frac{z}{\e}\right)\,dz\,dx+\theta\int_{\R^d}
\left|\nabla u_{\theta,R}^{\e}(x;\w)\right|^2\,dx+\lambda\int_{\R^d}|u_{\theta,R}^{\e}(x;\w)|^2\,dx\\
&=\int_{\R^d}h(x)u_{\theta,R}^{\e}(x;\w)\,dx
\le \frac{\lambda}{2}\int_{\R^d}|u_{\theta,R}^{\e}(x;\w)|^2\,dx+\frac{1}{2\lambda}\int_{\R^d}|h(x)|^2\,dx.
\end{align*}
In particular,
\begin{align}\label{t1-1-6a}
\sup_{\e,\theta\in (0,1),R\ge 1}
\e^{-d-2}\int_{\R^d}\int_{\R^d}\left(u_{\theta,R}^{\e}(x+z;\w)-u_{\theta,R}^{\e}(x;\w)\right)^2\nu\left(\frac{z}{\e}\right)\,dz\,dx
\le c_4(\lambda)\int_{\R^d}|h(x)|^2 \,dx,
\end{align}
\begin{align}\label{t1-1-8}
\sup_{\e,\theta\in (0,1),R\ge 1}\int_{\R^d}|u_{\theta,R}^{\e}(x;\w)|^2\,dx\le \lambda^{-2}\int_{\R^d}|h(x)|^2\,dx
\end{align}
and
\begin{align}\label{t1-1-5}
\sup_{\e,\theta\in (0,1),R\ge 1}\theta\int_{\R^d}
\left|\nabla u_{\theta,R}^{\e}(x;\w)\right|^2\,dx\le c_4(\lambda)\int_{\R^d}|h(x)|^2\,dx.
\end{align}

Since $b^R(\cdot;\w)*\eta_\theta$ is a smooth bounded vector field in $\R^d$, there exists a
unique Markov semigroup $\{T_t^{\theta,R,\e,\w}\}_{t\ge 0}$ associated with the operator $L_{\theta,R}^{\e,\w}$ so that
\begin{align*}
u_{\theta,R}^{\e}(x;\w)=\int_0^\infty e^{-\lambda t}T_t^{\theta,R,\e,\w}h(x)\,dt,
\end{align*}
which implies that
\begin{equation}\label{t1-1-3}
\sup_{\e,\theta\in (0,1),R\ge 1}\|u_{\theta,R}^{\e}(\cdot;\w)\|_{L^\infty(\R^d)}
\le \int_0^\infty e^{-\lambda t}\|h\|_{L^\infty(\R^d)}\,dt\le \lambda^{-1}\|h\|_{L^\infty(\R^d)}.
\end{equation}
Hence, according to \eqref{t1-1-6a} and \eqref{t1-1-8} as well as the definition of $\nu(z)$, we know that for every fixed $\e\in (0,1)$, $\{u_{\theta,R}^{\e}(\cdot;\w)\}_{\theta\in (0,1),R\ge 1}$ is weakly compact in
$W^{\alpha/2,2}(\R^d)$, so it is compact in $L_{loc}^2(\R^d)$, thanks to the compact embedding of the space $W_{loc}^{\alpha/2,2}(\R^d)$; see \cite[Theorem 7.1]{NPV}.
 Then, there exist a subsequence $\{u_{\theta_m,R_m}^{\e}(\cdot;\w)\}_{m\ge 1}$ (with $\theta_m\to 0$ and $R_m\to\infty$ as $m\to \infty$) and $u^{\e}(\cdot;\w)$
in $W^{\alpha/2,2}(\R^d)$ such that
$u_{\theta_m,R_m}^{\e}(\cdot;\w)$ converges  to $u^{\e}(\cdot;\w)$ weakly in $W^{\alpha/2,2}(\R^d)$ and strongly in $L^2_{loc}(\R^d)$ as $m \to \infty$. That is,
\begin{equation}\label{t1-1-5a}
\lim_{m \to \infty}\int_{D}\left|u_{\theta_m,R_m}^{\e}(x;\w)-u^{\e}(x;\w)\right|^2\,dx=0\quad {\rm for\ any\ bounded\ subset}\ D\subset \R^d,
\end{equation}
and for every $f\in C_c^\infty(\R^d)$,
\begin{equation*}
\lim_{m \to \infty}\int_{\R^d}\int_{\R^d}\frac{((u_{\theta_m,R_m}^{\e}(x+z;\w)-u^{\e}(x+z;\w))-
(u_{\theta_m,R_m}^{\e}(x;\w)-u^{\e}(x;\w)))(f(x+z)-f(x))}{|z|^{d+\alpha}}\,dz\,dx=0.
\end{equation*}
These two estimates, along with the definition of $\nu(z)$ again, \eqref{t1-1-6a} and \eqref{t1-1-3}, in turn yield that for every $f\in C_c^\infty(\R^d)$,
\begin{equation}\label{t1-1-7}
\lim_{m \to \infty}\int_{\R^d}\int_{\R^d}((u_{\theta_m,R_m}^{\e}(x+z;\w)-u^{\e}(x+z;\w))-
(u_{\theta_m,R_m}^{\e}(x;\w)-u^{\e}(x;\w)))(f(x+z)-f(x))\nu\left(\frac{z}{\e}\right)\,dz\,dx=0.
\end{equation}

Furthermore, by \eqref{t1-1-5},  for all $f\in C_c^1(\R^d)$,
\begin{equation}\label{t1-1-7a}
\begin{split}
&\lim_{m \to \infty}\theta_m \left|\int_{\R^d}\left\langle \nabla u_{\theta_m,R_m}^{\e}(x;\w), \nabla f(x)\right\rangle\, dx\right|\\
&\le \lim_{m \to \infty}\theta_m\left(\int_{\R^d}\left|\nabla  u_{\theta_m,R_m}^{\e}(x;\w)\right|^2 \,dx\right)^{1/2}\cdot
\left(\int_{\R^d}|\nabla f(x)|^2 \,dx\right)^{1/2}\\
&\le c_5\lim_{m \to \infty}\theta_m^{1/2}=0.
\end{split}
\end{equation}

For every $f\in C_c^1(\R^d)$, set the support of $f$ by $V(f):={\rm supp}[f]$.
By the definition of $b_j^R(x;\w)$ and the fact that  $|\rho_R'(s)|\le 2$ for all $s\in \R$,
we know that for every fixed $\e\in (0,1)$ and $\w \in \Omega$,
\begin{align*}
\sup_{R\ge 1, x\in V(f)}\left|b_j^R\left(\frac{x}{\e};\w\right)\right|\le c_6\sup_{x\in \frac{V(f)}{\e}}\sup_{1\le j,l\le d}\left|\frac{H_{jl}(x;\w)}{\partial x_l}\right|=:c_7(\e,f,\w)<\infty,
\end{align*} where we used the fact that $x\mapsto D_l\tilde H_{jl}(\tau_x \w)$ is continuous.
Combining  this with \eqref{t1-1-3} and \eqref{t1-1-5a}, and applying the dominated convergence theorem, we get
\begin{align*}
\lim_{m \to \infty}\int_{\R^d}b_j^{R_m}*\eta_{\theta_m}\left(\frac{x}{\e};\w\right)\frac{\partial f(x)}{\partial x_j}u_{\theta_m,R_m}^{\e}(x;\w)\,dx&=
\lim_{m \to \infty}\int_{\R^d}b_j^{R_m}\left(\frac{x}{\e};\w\right)\frac{\partial f(x)}{\partial x_j}u^{\e}(x;\w)\,dx\\
&=\int_{\R^d}b_j\left(\frac{x}{\e};\w\right)\frac{\partial f(x)}{\partial x_j}u^{\e}(x;\w)\,dx,\quad f\in C_c^1(\R^d).
\end{align*}

Therefore, putting this, \eqref{t1-1-7}, \eqref{t1-1-7a} and \eqref{t1-1-5a} together, we can prove \eqref{t1-1-0} by letting $m \to \infty$ in \eqref{t1-1-4a}.
Furthermore, by \eqref{t1-1-6a}, \eqref{t1-1-8} and \eqref{t1-1-3}, we can get the desired conclusions \eqref{t1-1-1a} and \eqref{t1-1-0a}.
\end{proof}

\subsection{Properties of the corrector $\phi$} In this part, let $\phi:\R^d \times \Omega \to \R^d$ be the corrector  to the equation \eqref{e:equ-01} that was constructed in Theorem $\ref{t2-1}$.
We first consider the $W^{1,q}_{loc}$-regularity property (with $q\in (1,2)$) of the corrector $\phi$.

\begin{lemma}\label{l1-5} Suppose that Assumptions $\ref{a1-2--}$ and $\ref{a1-2}$ hold, and that
\eqref{t2-1-3a} holds with some $q\in (1,2)$ and
$d> 4(\alpha-1)$.
Then there are  
a $\Pp$-null set $\Lambda$ and $\tilde \Phi\in L^q(\Omega,\R^d\times \R^d,\Pp)$, so that $\phi(\cdot;\w)\in W_{loc}^{1,q}(\R^d,\R^d)$ for every $\w\notin \Lambda$, and  
$\nabla \phi(x;\w)=\tilde \Phi(\tau_x \w)$ for a.e. $x\in \R^d$ and  every $\w\notin \Lambda$.
\end{lemma}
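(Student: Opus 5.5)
We will obtain the $W^{1,q}_{loc}$ bound through a local regularity estimate for the corrector equation \eqref{t2-1-1}, localized on unit balls and then transferred to the probability space by stationarity. Concretely, we write \eqref{t2-1-1} on $B(x_0,2r_0)$ as
\[
L_0\phi_k=b_k+\langle b,\nabla\phi_k\rangle=\mathrm{div}\bigl(b\,\phi_k\bigr)+b_k,
\]
using $\mathrm{div}\,b=0$. We treat the right-hand side as a source term. Fix a cutoff $\chi\in C_c^\infty(B(x_0,2r_0))$ with $\chi\equiv1$ on $B(x_0,r_0)$, and set $w=\chi\,(\phi_k-(\phi_k)_{B(x_0,2r_0)})$. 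Then $w$ solves a global equation of the form
\[
L_0 w=\chi\,\mathrm{div}(b\phi_k)+\chi b_k+\mathcal{C}(\chi,\phi_k),
\]
where $\mathcal{C}$ is the commutator. The commutator splits into a small-jump part, controlled by the $W^{\alpha/2,2}$ seminorm of $\phi_k$ near $x_0$, and a large-jump part. The large-jump part involves $\int|\phi_k(x+z)-\phi_k(x)|\nu(z)\,dz$, which is controlled by \eqref{t2-1-2} and \eqref{t2-1-3}.

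Since the symbol of $L_0$ behaves like $|\xi|^\alpha$ at high frequencies (Lemma \ref{l1-1}), we invert $L_0$ up to a lower-order term. By Fourier multiplier / Calder\'on--Zygmund estimates, $L_0^{-1}\mathrm{div}$ gains $\alpha-1>0$ derivatives in $L^p$. This yields $\nabla w\in L^{q}$ provided two things hold. First, $b\phi_k\chi$ must lie in $W^{2-\alpha,q}$; equivalently, $\nabla w$ is bounded by a fractional derivative of order $2-\alpha$ of $b\phi_k$. Second, $b_k\chi$ must lie in $W^{1-\alpha,q}$, which is automatic from $\tilde H\in\HH_1$. The key pointwise input is $|b|\le N_{r_0}(\tau_{x_0}\w)$ on $B(x_0,r_0)$. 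To handle $b\phi_k$ we do not use a derivative of $b$. Instead we interpolate: $\phi_k\in W^{\alpha/2,2}_{loc}$ and we want $2-\alpha$ derivatives in $L^q$. Because $q<2$ we can trade integrability for smoothness. Via a bootstrap/iteration in the smoothness index (starting from $\alpha/2$ and gaining $\alpha-1$ at each step), we reach smoothness $1$ after finitely many steps, each step costing one factor of $N_{r_0}$. The exponent $\frac{\max\{4,d\}}{2(\alpha-1)(2-q)}$ in \eqref{t2-1-3a} is designed to pay for these factors through H\"older's inequality: the exponent $(2-q)$ comes from H\"older between $L^2$ and $L^q$, and $\alpha-1$ from the gain per step. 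The hypothesis $d>4(\alpha-1)$ is used in the fractional Sobolev embeddings along the way.

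The result of the local step is an estimate of the form
\[
\int_{B(x_0,r_0)}|\nabla\phi|^q\,dx\le C\,\Psi(\tau_{x_0}\w),
\]
where $\Psi$ is built from $N_{r_0}$, from $\int_{B(0,2r_0)}|\phi(x;\w)|^2dx$, and from $\int_{B(0,2r_0)}\int|\phi(x+z)-\phi(x)|^2\nu(z)\,dz\,dx$. We arrange the H\"older splitting so that $\Ee[\Psi]<\infty$ using \eqref{t2-1-3}, \eqref{t2-1-2} and \eqref{t2-1-3a}. By stationarity and the co-cycle property \eqref{t2-1-1a}, covering a bounded set by such balls gives $\phi(\cdot;\w)\in W^{1,q}_{loc}$ a.s.

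To construct $\tilde\Phi$, we run the same estimate on the approximants $\phi_{\theta_m,R_m}$ from the proof of Theorem \ref{t2-1}, which are gradients of stationary fields: $\nabla\phi_{\theta,R}(x;\w)=-D\tilde\phi_{\theta,R}(\tau_x\w)$. This gives $\sup_m\Ee[|D\tilde\phi_{k,\theta_m,R_m}|^q]<\infty$ uniformly, once the estimate above is proved with constants independent of $\theta,R$; the extra $\theta\Delta$ term only helps, and $|b^R|\le 2N_{r_0}$. Reflexivity of $L^q$, $q>1$, gives a weak limit $-\tilde\Phi$. Passing to the limit in the weak formulation of derivatives, using the weak convergence of $\phi_{\theta_m,R_m}$ to $\phi$, identifies $\nabla\phi(x;\w)=\tilde\Phi(\tau_x\w)$ a.e. Finally, a Fubini argument over a countable family of test functions yields the common null set $\Lambda$.

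The main obstacle will be the local step: closing the bootstrap estimate for $L_0^{-1}\mathrm{div}(b\phi)$ with only $L^\infty$ control of $b$ (no derivative of $b$) and with nonlocal tails. I would first try a Caccioppoli-type fractional estimate combined with the multiplier bound for $|\xi|/(|\xi|^2\wedge|\xi|^\alpha)$, and check that the exponent bookkeeping matches \eqref{t2-1-3a}.
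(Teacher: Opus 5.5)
Your final step (uniform bounds on the stationary approximants $\phi_{\theta_m,R_m}$, weak compactness in $L^q(\Omega)$, identification $\nabla\phi(x;\w)=\tilde\Phi(\tau_x\w)$) matches the paper. The local step, which carries all the weight, does not work as proposed.

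With only the pointwise bound $|b|\le cN_{r_0}$ on the ball, multiplication by $b$ destroys every positive amount of smoothness. From $\phi_k\in W^{\alpha/2,2}$, or from any $W^{s,p}$ with $s>0$, you get nothing better than $b\phi_k\chi\in L^p$. So the requirement $b\phi_k\chi\in W^{2-\alpha,q}$ fails, and $\nabla L_0^{-1}\mathrm{div}(b\phi_k\chi)$, an operator of positive order $2-\alpha$, does not land in $L^q$. Starting from $s=\alpha/2$, the divergence form returns only smoothness $\alpha-1<\alpha/2$, so the iteration goes backwards instead of gaining $\alpha-1$ per step. The non-divergence form $\langle b,\nabla\phi_k\rangle$ is no better: for $s<1$ it has negative smoothness $s-1$, and negative-order spaces are not stable under multiplication by $L^\infty$ functions. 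No iteration in the smoothness index can start, so your bookkeeping (one factor of $N_{r_0}$ per step, $d>4(\alpha-1)$ entering through Sobolev embeddings) does not correspond to a valid argument.

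The missing idea is a perturbative absorption argument, which is what the paper does. One works with the approximants, for which $\nabla\phi^{r_0}_{k,\theta_m,R_m}\in L^2$ is known a priori. One writes $\phi^{r_0}_{k,\theta_m,R_m}=\int_0^\infty e^{-\theta_m t}T^m_tG_{k,m}\,dt$ for the semigroup of $\theta_m\Delta+\Delta^{\alpha/2}$ and splits in time:
\begin{itemize}
\item For $t<t_0$ the drift is kept in non-divergence form, $|F_{k,m,0}|\le cN_{r_0}|\nabla\phi^{r_0}_{k,\theta_m,R_m}|$. The bound $|\nabla T^m_tf|\le ct^{-1/\alpha}T^m_t|f|$ is integrable at $0$ because $\alpha>1$. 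This gives $c\,t_0^{1-1/\alpha}N_{r_0}\|\nabla\phi^{r_0}_{k,\theta_m,R_m}\|_{L^2}$, which is absorbed into the left side by choosing $t_0\asymp(1+N_{r_0})^{-\alpha/(\alpha-1)}$.
\item Only for $t>t_0$ is the divergence form $F_{k,m,0}=-\mathrm{div}(b^{R_m}\phi^{r_0}_{k,\theta_m,R_m})$ used, through $|\nabla T^m_t\partial_jf|\le ct^{-2/\alpha}T^m_t|f|$. This is integrable at $\infty$ because $\alpha<2$, so no derivative of $b$ or $\phi$ is needed there.
\item The remaining source terms are handled by an $L^1$--$L^2$ bound with decay $t^{-(d+4)/(4\alpha)}$, integrable at $\infty$ precisely because $d>4(\alpha-1)$.
\end{itemize}
The resulting $L^2$-gradient bound carries powers of $t_0^{-1}$, i.e.\ of $1+N_{r_0}$. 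H\"older's inequality with exponents $2/(2-q)$ and $2/q$ then converts it into a uniform $L^q(\Omega)$ bound under \eqref{t2-1-3a}.

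This absorption needs the a priori finiteness of $\|\nabla\phi^{r_0}\|_{L^2}$. It therefore cannot be run directly on the limit corrector $\phi$, as your first paragraph proposes. The approximants are essential to the estimate itself, not merely a device for constructing $\tilde\Phi$.
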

\begin{proof} The proof is split into two parts.

(1) Let $\phi_{\theta_m,R_m}:\R^d \times \Omega \to \R^d$ be defined by \eqref{t2-1-5a} in the proof of Theorem \ref{t2-1},
which converges weakly to $\phi$ in $L^2(\Omega,W_{loc}^{\alpha/2,2}(\R^d,\R^d);\Pp)$ and $L^2(\R^d\times \Omega, \R^d;\nu(z)\,dz\times \Pp)$ as $m\to\infty$.
According to \eqref{t2-1-4}, \eqref{t2-1-7} and the proof of \eqref{t2-1-1}, we can verify that there
exists a $\Pp$-null set $\Lambda_0$, so that for every $f\in C_c^1(\R^d)$, $\w\notin \Lambda_0$ and $1\le k \le d$,
\begin{equation}\label{t2-1-9--}
\begin{split}
&\theta_m \int_{\R^d}\phi_{k,\theta_m,R_m}(x;\w)f(x)\,dx+\theta_m \tilde \phi_{k,\theta_m,R_m}(\w)\int_{\R^d}f(x)\,dx
+\theta_m \int_{\R^d}\langle \nabla \phi_{k,\theta_m,R_m}(x;\w), \nabla f(x)\rangle \,dx\\
&+
\frac{1}{2}\int_{\R^d}\int_{\R^d}\left(\phi_{k,\theta_m,R_m}(x+z;\w)-\phi_{k,\theta_m,R_m}(x;\w)\right)
\left(f(x+z)-f(x)\right)\nu(z)\,dz\,dx\\
&+\int_{\R^d} \langle b^{R_m}(x;\w), \nabla \phi_{k,\theta_m,R_m}(x;\w)\rangle f(x)\,dx=
-\int_{\R^d}b_k(x;\w)f(x)\,dx,
\end{split}
\end{equation}
where we used the fact that $\phi_{k,\theta_m,R_m}(\cdot;\w)\in W^{1,2}(\R^d)$ for a.s. $\w\in \Omega$,
due to \eqref{t2-1-5}.

Let $r_0\ge 1$ be the constant in \eqref{t2-1-3a}. Let $\psi_{r_0}:\R^d \to [0,1]$ be a cut-off function such that $\psi_{r_0}\in C_c^\infty(\R^d)$,
${\rm supp}[\psi_{r_0}] \subset B(0,r_0)$, $\sup_{x\in \R^d}|\nabla^k \psi_{r_0}(x)|\le c_1(k,r_0)$
for $k=1,2$, and $\psi_{r_0}(x)=1$ for every
$x\in B(0,r_0/2)$. Define $\phi_{\theta_m,R_m}^{r_0}(x;\w)=\phi_{\theta_m,R_m}(x;\w)\psi_{r_0}(x)$. Taking $f=f\psi_{r_0}$ in the equation \eqref{t2-1-9--}, we find that for every $f\in C_c^1(\R^d)$,
\begin{equation}\label{t2-1-9}
\begin{split}
&\theta_m \int_{\R^d}\phi_{k,\theta_m,R_m}^{r_0}(x;\w)f(x)\,dx
+\theta_m \int_{\R^d}\langle \nabla \phi_{k,\theta_m,R_m}^{r_0}(x;\w), \nabla f(x)\rangle\, dx\\
&+
\frac{1}{2}\int_{\R^d}\int_{\R^d}(\phi_{k,\theta_m,R_m}^{r_0}(x+z;\w)-\phi_{k,\theta_m,R_m}^{r_0}(x;\w))
\left(f(x+z)-f(x)\right)\nu(z)\,dz\,dx\\
&=
\int_{\R^d}F_{k,m}(x;\w)f(x)\,dx.
\end{split}
\end{equation}
Here,
\begin{align*}
F_{k,m}(x;\w)&:=\sum_{j=0}^3 F_{k,m,j}(x;\w),
\end{align*}
where
\begin{align*}
F_{k,m,0}(x;\w)&=-\langle b^{R_m}(x;\w), \nabla \phi_{k,\theta_m,R_m}^{r_0}(x;\w)\rangle,\\
F_{k,m,1}(x;\w)&=-b_k(x;\w)\psi_{r_0}(x)-\theta_m\tilde \phi_{k,\theta_m,R_m}(\w)\psi_{r_0}(x),\\
F_{k,m,2}(x;\w)&=-
\int_{\R^d}\left(\phi_{k,\theta_m,R_m}(x+z;\w)-\phi_{k,\theta_m,R_m}(x;\w)\right)\left(\psi_{r_0}(x+z)-\psi_{r_0}(x)\right)\nu(z)\,dz,\\
F_{k,m,3}(x;\w)&=-\phi_{k,\theta_m,R_m}(x;\w)
\left(L_0\psi_{r_0}(x)+\theta_m\Delta \psi_{r_0}(x)-\langle b^{R_m}(x;\w), \nabla \psi_{r_0}(x)\rangle\right) \\
&\quad-2\theta_m\langle \nabla \phi_{k,\theta_m,R_m}(x;\w), \nabla \psi_{r_0}(x)\rangle.
\end{align*}

Define $\phi_{\theta_m, R_m}^{r_0,\delta}(x;\w)=\phi_{\theta_m,R_m}^{r_0}*\eta_{\delta}(x;\w)$, where $\eta_{\delta}:\R^d \to [0,1]$ is the smooth mollifier used
in the proof of Proposition \ref{t1-1}.
Changing $x$ into $z$, taking $f(z)=\eta_\delta(x-z)$ and integrating with respect to the variable $z$ in \eqref{t2-1-9}, we can derive that
\begin{align*}
\theta_m \phi_{k,\theta_m, R_m}^{r_0,\delta}(x;\w)-\theta_m \Delta \phi_{k,\theta_m, R_m}^{r_0,\delta}(x;\w)-
L_0 \phi_{k,\theta_m, R_m}^{r_0,\delta}(x;\w)=F_{k,m}*\eta_\delta(x;\w),
\end{align*}
where
\begin{align*}
F_{k,m}*\eta_\delta(x;\w):=\int_{\R^d}F_{k,m}(z;\w)\eta_\delta(x-z)\,dz,\quad x\in \R^d.
\end{align*}
This implies
\begin{align}\label{t2-1-10}
\theta_m \phi_{k,\theta_m, R_m}^{r_0,\delta}(x;\w)-\theta_m \Delta \phi_{k,\theta_m, R_m}^{r_0,\delta}(x;\w)-
\Delta^{\alpha/2}\phi_{k,\theta_m, R_m}^{r_0,\delta}(x;\w)=G_{k,m}^{\delta}(x;\w),
\end{align}
where $\Delta^{\alpha/2}$ denotes the fractional Laplacian operator corresponding  to the L\'evy measure $|z|^{-d-\alpha}\,dz$, and
\begin{align*}
G_{k,m}^{\delta}(x;\w):=&F_{k,m}*\eta_\delta(x;\w)+\int_{\{|z|>1\}}
(\phi_{k,\theta_m, R_m}^{r_0,\delta}(x+z;\w)-\phi_{k,\theta_m, R_m}^{r_0,\delta}(x;\w))\nu(z)\,dz\\
&-\int_{\{|z|>1\}}
(\phi_{k,\theta_m, R_m}^{r_0,\delta}(x+z;\w)-\phi_{k,\theta_m, R_m}^{r_0,\delta}(x;\w))\frac{1}{|z|^{d+\alpha}}\,dz.
\end{align*}

According to \eqref{t2-1-10}, we have
\begin{align*}
\phi_{k,\theta_m,R_m}^{r_0,\delta}(x;\w)=\int_0^\infty e^{-\theta_m t}T_t^m  G_{k,m}^{\delta} (x;\w)\, dt,
\end{align*}
where $\{T_t^m\}_{t\ge 0}:L^2(\R^d;dx)\to L^2(\R^d;dx)$ is the Markov semigroup associated with the infinitesimal operator
$\theta_m\Delta+\Delta^{\alpha/2}$. Letting $\delta \to 0$ yields that
\begin{align}\label{t2-1-10a}
\phi_{k,\theta_m,R_m}^{r_0}(x;\w)=\int_0^\infty e^{-\theta_m t}T_t^m G_{k,m}(x;\w) \,dt,
\end{align}
where
\begin{align*}
G_{k,m}(x;\w):=&F_{k,m}(x;\w)+\int_{\{|z|>1\}}
 (\phi_{k,\theta_m, R_m}^{r_0}(x+z;\w)-\phi_{k,\theta_m, R_m}^{r_0}(x;\w))\nu(z)\,dz\\
&-\int_{\{|z|>1\}}
 (\phi_{k,\theta_m, R_m}^{r_0}(x+z;\w)-\phi_{k,\theta_m, R_m}^{r_0}(x;\w))\frac{1}{|z|^{d+\alpha}}\,dz
\end{align*}

Let $p_{t,\alpha}(x)$ and $q_t^m(x)$ be the fundamental solutions to the operators $\Delta^{\alpha/2}$ and $\theta_m \Delta$ respectively. Thus, it holds that
\begin{align*}
T_t^m f(x)=\int_{\R^d}p_t^m(x-y)f(y)\,dy,\quad  f\in L^2(\R^d;dx),
\end{align*}
where
\begin{align*}
p_t^m(x)=\int_{\R^d}p_{t,\alpha}(x-y)q_t^m(y)\,dy,\quad t>0,\ x\in \R^d.
\end{align*}
It is well known that (see e.g. \cite[Lemma 2.2]{CZ})
$$|\nabla^l p_{t,\alpha}(x)|\le c_2t^{-l/\alpha}p_{t,\alpha}(x)\le c_3t^{-({d+l})/{\alpha}},\quad t>0,\ x\in \R^d,\ l=0,1,2.$$
This yields that
\begin{align*}
|\nabla^l p_t^m(x)|\le c_2t^{-l/\alpha}p_{t}^m(x)\le c_3t^{-({d+l})/{\alpha}},\quad t>0,\ x\in \R^d,\ l=0,1,2,
\end{align*}
and here we emphasize that the positive constants $c_2$ and $c_3$ are independent of $m$.
Hence, for every $f\in L^2(\R^d;dx)$, $t>0$, $x\in \R^d$ and $m\ge 1$,
\begin{align}\label{t2-1-11}
|\nabla T_t^m f(x)|\le c_2t^{-1/\alpha}T_t^m |f|(x),
\end{align}
\begin{equation}\label{t2-1-11a}
\begin{split}
|\nabla T_t^m f(x)|&\le c_2t^{-1/\alpha}\left(\int_{\R^d}|p^m_t(x-z)|^2\,dz\right)^{1/2}
\left(\int_{\R^d}|f(z)|^2\,dz\right)^{1/2}\\
&\le c_2t^{-1/\alpha}p^m_{2t}(x)^{1/2}\left(\int_{\R^d}|f(z)|^2\,dz\right)^{1/2}\le c_4t^{-(d+2)/({2\alpha})}\left(\int_{\R^d}|f(z)|^2\,dz\right)^{1/2}
\end{split}
\end{equation}and
\begin{equation}\label{t2-1-12}
\begin{split}
\left|\nabla T_t^m \left(\frac{\partial f}{\partial x_j}\right)(x)\right|&=\left|\int_{\R^d}\frac{\partial \nabla p_t^m(x-z)}{\partial x_j}f(z)\,dz\right|
\le c_2t^{-2/\alpha}T_t^m |f|(x).
\end{split}
\end{equation}

(2) Recall that $\phi_{k,\theta_m,R_m}^{r_0}(x;\w)=\phi_{k,\theta_m,R_m}(x;\w)\psi_{r_0}(x)$, so $\phi_{k,\theta_m,R_m}^{r_0}(\cdot;\w)\in L^2(\R^d;dx)$.
Recall also that $N_{r_0}(\w):=\sup_{x\in B(0,r_0), 1\le j,l \le d}|D_j\tilde H_{jl}(\tau_x \w)|.$
This implies immediately that
\begin{equation}\label{t2-1-12a}
\begin{split}
\sup_{R\ge 1}\sup_{x\in B(0,r_0)}|b^R(x;\w)|&\le \|\rho_R'\|_\infty\sum_{j,l=1}^d  \left(\sup_{x\in B(0,r_0)}\left|D_j \tilde H_{jl}(\tau_x \w)\right|\right)\le c_5N_{r_0}(\w).
\end{split}
\end{equation}
According to the definitions of $F_{k,m,j}(\w)$, $j=0,1,2$, we have
\begin{align}\label{t2-1-7a}
|F_{k,m,0}(x;\w)|\le c_6N_{r_0}(\w)|\nabla \phi_{k,\theta_m,R_m}^{r_0}(x;\w)|,
\end{align}
\begin{align*}
|F_{k,m,1}(x;\w)|\le c_6(N_{r_0}(\w)+\theta_m|\tilde \phi_{k,\theta_m,R_m}(\w)|)\I_{B(0,r_0)}(x),
\end{align*}
and
\begin{align*}
&|F_{k,m,2}(x;\w)|\\
&\le c_{7}\|\nabla \psi_{r_0}\|_{L^\infty(\R^d)}\left(
\int_{\R^d}
\left|\phi_{k,\theta_m,R_m}(x+z;\w)-\phi_{k,\theta_m,R_m}(x;\w)\right|^2\nu(z)\,dz\right)^{1/2}\I_{B(0,K_0r_0)}(x)\\
&\quad+c_7\left(\int_{B(x,r_0)}\nu^2(z)\,dz\right)^{1/2}\left[r_0^{d/2}|\phi_{k,\theta_m,R_m}(x;\w)|+\left(\int_{B(0,r_0)}
|\phi_{k,\theta_m,R_m}(z;\w)|^2\,dz\right)^{1/2}\right]\I_{B(0,K_0r_0)^c}(x)\\
&\le c_8r_0^{-1}\left(
\int_{\R^d}
\left|\phi_{k,\theta_m,R_m}(x+z;\w)-\phi_{k,\theta_m,R_m}(x;\w)\right|^2\nu(z)\,dz\right)^{1/2}\I_{B(0,K_0r_0)}(x)\\
&\quad+c_{8}r_0^{d}\gamma(x)\nu(x)\left[|\phi_{k,\theta_m,R_m}(x;\w)|+ r_0^{-d/2}
\left(\int_{B(0,r_0)}|\phi_{k,\theta_m,R_m}(z;\w)|^2\,dz\right)^{1/2}\right]\I_{B(0,K_0r_0)^c}(x),
\end{align*}
where
$K_0\ge2$ and $\gamma(x)$ are given in Assumption \ref{a1-2--}(iii), and we used  \eqref{t2-1-2a} in the last inequality.

Similarly, by \eqref{t2-1-2a}, we also have
\begin{align*}
|L_0\psi(r)|&\le c_9\left(\|\nabla^2 \psi_{r_0}\|_{L^\infty(\R^d)}\I_{B(0,K_0r_0)}(x)+r_0^{d/2}\left(\int_{B(x,r_0)}\nu^2(z)\,dz\right)^{1/2}\I_{B(0,K_0r_0)^c}(x)\right)\\
&\le c_{10}\left(r_0^{-2}\I_{B(0,K_0r_0)}(x)+r_0^d\gamma(x)\nu(x)\I_{B(0,K_0r_0)^c}(x)\right),
\end{align*}
which along with \eqref{t2-1-12a} implies that
\begin{align*}
|F_{k,m,3}(x;\w)|\le &c_{11}\left(\left(N_{r_0}(\w)r_0^{-1}+r_0^{-2}\right)\I_{B(0,K_0r_0)}(x)+r_0^d\gamma(x)\nu(x)\I_{B(0,K_0r_0)^c}(x)\right)\left|\phi_{k,\theta_m,R_m}(x;\w)\right|\\
&+ c_{11}r_0^{-1}\theta_m|\nabla \phi_{k,\theta_m,R_m}(x;\w)| \I_{B(0,K_0r_0)}(x).
\end{align*}

Furthermore, by the fact
${\rm supp}[\phi_{\theta_m,R_m}^{r_0}] \subset B(0,r_0)$, we get
\begin{align*}
&\left|\int_{\{|z|>1\}}(\phi_{k,\theta_m,R_m}^{r_0}(x+z;\w)-\phi_{k,\theta_m,R_m}^{r_0}(x;\w))\,\nu(z)\,dz\right|\\
&\le\int_{\{|z|>1\}}|\phi_{k,\theta_m,R_m}^{r_0}(x;\w)|\nu(z)\,dz+\int_{\{|z|>1:x+z\in B(0,r_0)\}}|\phi_{k,\theta_m,R_m}^{r_0}(x+z;\w)|\nu(z)\,dz \\
&\le c_{12}|\phi_{k,\theta_m,R_m}^{r_0}(x;\w)|+c_{12}\left(\int_{B(x,r_0)\cap B(0,1)^c}\nu^2(z)\,dz\right)^{1/2}
\left(\int_{B(0,r_0)}|\phi_{k,\theta_m,R_m}^{r_0}(z;\w)|^2\,dz\right)^{1/2}\\
&\le c_{13}|\phi_{k,\theta_m,R_m}^{r_0}(x;\w)|+c_{13}r_0^{d/2}\Big(\I_{B(0,K_0r_0)}(x)+\gamma(x)\nu(x)\I_{B(0,K_0r_0)^c}(x)\Big)
\left(\int_{B(0,r_0)}|\phi_{k,\theta_m,R_m}^{r_0}(z;\w)|^2\,dz\right)^{1/2}
\end{align*} and
\begin{align*}
&\left|\int_{\{|z|>1\}}(\phi_{k,\theta_m,R_m}^{r_0}(x+z;\w)-\phi_{k,\theta_m,R_m}^{r_0}(x;\w))\frac{1}{|z|^{d+\alpha}}\,dz\right|\\
&\le c_{14}|\phi_{k,\theta_m,R_m}^{r_0}(x;\w)|+c_{14}r_0^{d/2}(1+|x|)^{-d-\alpha}
\left(\int_{B(0,r_0)}|\phi_{k,\theta_m,R_m}^{r_0}(z;\w)|^2\,dz\right)^{1/2}.
\end{align*}

According to all the estimates above, we obtain that
\begin{equation*}
\begin{split}
&\left|G_{k,m}(x;\w)-F_{k,m,0}(x;\w)\right|\\
&\le c_{15}
\Bigg(\left(1+N_{r_0}(\w)\right)|\phi_{k,\theta_m,R_m}(x;\w)|\\
&\qquad\qquad +\left(
\int_{\R^d}
\left|\phi_{k,\theta_m,R_m}(x+z;\w)-\phi_{k,\theta_m,R_m}(x;\w)\right|^2\nu(z)\,dz\right)^{1/2}\bigg)\I_{B(0,K_0r_0)}(x)\\
&\quad+c_{15}\left((1+N_{r_0}(\w))+\theta_m|\tilde \phi_{k,\theta_m,R_m}(\w)|+\theta_m|\nabla \phi_{k,\theta_m,R_m}(x;\w)|\right)\I_{B(0,K_0r_0)}(x)\\
&\quad +c_{15}\gamma(x)\nu(x)\left(|\phi_{k,\theta_m,R_m}(x;\w)|+
\left(\int_{B(0,r_0)}|\phi_{k,\theta_m,R_m}(z;\w)|^2\,dz\right)^{1/2}\right)\I_{B(0,K_0r_0)^c}(x)\\
&\quad+c_{15}\left(\int_{B(0,r_0)}|\phi_{k,\theta_m,R_m}(z;\w)|^2\,dz\right)^{1/2} (1+|x|)^{-d-\alpha},
\end{split}
\end{equation*}
where $c_{15}>0$  may depend on $r_0$. In particular,
\begin{equation}\label{t2-1-13a}
\begin{split}
&\int_{\R^d}\left|G_{k,m}(x;\w)-F_{k,m,0}(x;\w)\right| \,dx \le c_{16}\sqrt{K_{k,m}(\w)},\\
&\int_{\R^d}\left|G_{k,m}(x;\w)-F_{k,m,0}(x;\w)\right|^2\,dx\le c_{16}K_{k,m}(\w),
\end{split}
\end{equation}
where
\begin{align*}
K_{k,m}(\w):=&\theta_m^2 \int_{B(0,K_0r_0)}\left|\nabla \phi_{k,\theta_m,R_m}(x;\w)\right|^2\,dx
+\left(|N_{r_0}(\w)|^2+1\right)\left(\int_{B(0,K_0r_0)}\left|\phi_{k,\theta_m,R_m}(x;\w)\right|^2\,dx+1\right)\\
&+\int_{\R^d}|\phi_{k,\theta_m,R_m}(x;\w)|^2\nu(x)\,dx +\theta_m^2|\tilde \phi_{k,\theta_m,R_m}(\w)|^2\\
&+\int_{B(0,K_0r_0)}\left(\int_{\R^d}\left|\phi_{k,\theta_m,R_m}(x+z;\w)-\phi_{k,\theta_m,R_m}(x;\w)\right|^2\nu(z)\,dz\right)\,dx.
\end{align*}
Here, we used  \eqref{a1-2-1},
which (along with Cauchy-Schwartz inequality) implies that
\begin{align*}
\int_{\{|x|\ge K_0r_0\}}\left|\phi_{k,\theta_m,R_m}(x;\w)\right|\gamma(x)\nu(x)\,dx
&\le \left(\int_{\R^d}\left|\phi_{k,\theta_m,R_m}(x;\w)\right|^2\nu(x)\,dx\right)^{1/2}
\cdot \left(\int_{\{|x|\ge K_0r_0\}}\gamma^2(x)\nu(x)\,dx\right)^{1/2}\\
&\le c_{16}\left(\int_{\R^d}\left|\phi_{k,\theta_m,R_m}(x;\w)\right|^2\nu(x)\,dx\right)^{1/2}.
\end{align*}

Therefore, by \eqref{t2-1-11}, \eqref{t2-1-7a} and \eqref{t2-1-13a}, for any $t>0$,
\begin{align}\label{t2-1-13}
\int_{\R^d}|\nabla T^m_t G_{k,m}(x;\w)|^2\,dx
\le c_{17}t^{-2/\alpha}\left(|N_{r_0}(\w)|^2\int_{\R^d}\left|\nabla \phi_{k,\theta_m,R_m}^{r_0}(x;\w)\right|^2\,dx
+K_{k,m}(\w)\right).
\end{align}

According to \eqref{t2-1-12}, \eqref{t2-1-12a} and the fact ${\rm div}b_j^{R_m}(\cdot;\w)=0$,
for every $t>0$ and $x\in \R^d$,
\begin{align*}
\left|\nabla T_t^m\left(F_{k,m,0}(\cdot;\w)\right)(x)\right|&=
\left|\sum_{j=1}^d\nabla T_t^m\left(\frac{\partial (b^{R_m}_j(\cdot;\w)\phi_{k,\theta_m,R_m}^{r_0}(\cdot;\w))}{\partial x_j}\right)(x)\right|\\
&\le c_{18}t^{-2/\alpha}N_{r_0}(\w)T_t^m|\phi_{k,\theta_m,R_m}^{r_0}(\cdot;\w)|(x),
\end{align*}
which implies immediately that
\begin{equation}\label{t2-1-14a}
\int_{\R^d}\left|\nabla T_t^m\left(F_{k,m,0}(\cdot;\w)\right)(x)\right|^2dx
\le c_{19}t^{-4/\alpha}N_{r_0}^2(\w)\int_{B(0,r_0)}|\phi_{k,\theta_m,R_m}(x;\w)|^2dx.
\end{equation}
On the other hand, by \eqref{t2-1-11}, \eqref{t2-1-11a} and \eqref{t2-1-13a}, we derive that
for every $t>0$ and $x\in \R^d$,
\begin{align*}
\left|\nabla T^m_t \left(G_{k,m}(\cdot;\w)-F_{k,m,0}(\cdot;\w)\right)(x)\right|^2&\le c_{20}t^{-\frac{d+4}{2\alpha}}\sqrt{K_{k,m}(\w)}
T_t^m\left|G_{k,m}(\cdot;\w)-F_{k,m,0}(\cdot;\w)\right|(x),
\end{align*}
which together with \eqref{t2-1-13a} again gives us that
\begin{equation}\label{t2-1-14}
\int_{\R^d}\left|\nabla T^m_t \left(G_{k,m}(\cdot;\w)-F_{k,m,0}(\cdot;\w)\right)(x)\right|^2\,dx\le c_{20}t^{-\frac{d+4}{2\alpha}}K_{k,m}(\w).
\end{equation}
Putting \eqref{t2-1-10a}, \eqref{t2-1-13}, \eqref{t2-1-14} and \eqref{t2-1-14a} together, we know that for every
$t_0>0$,
\begin{align*}
\|\nabla \phi^{r_0}_{k,\theta_m,R_m}(\cdot;\w)\|_{L^2(\R^d)}
&\le
\int_0^{t_0}e^{-\theta_m t} \|\nabla T_t G_{k,m}(\cdot;\w)\|_{L^2(\R^d)}\,dt+
\int_{t_0}^{\infty}e^{-\theta_m t} \|\nabla T_t G_{k,m}(\cdot;\w)\|_{L^2(\R^d)}\,dt\\
&\le c_{21}\left(N_{r_0}(\w)\|\nabla \phi^{r_0}_{k,\theta_m,R_m}(\cdot;\w)\|_{L^2(\R^d)}+
\sqrt{K_{k,m}(\w)}\right)\left(\int_0^{t_0}t^{-1/\alpha}\,dt\right)\\
&\quad+c_{21}N_{r_0}(\w)
\left(\int_{B(0,r_0)}|\phi_{k,\theta_m,R_m}(x;\w)|^2dx\right)^{1/2}\left(\int_{t_0}^\infty t^{-2/\alpha}\,dt\right)\\
&\quad
+c_{21}\sqrt{K_{k,m}(\w)}\left(\int_{t_0}^\infty t^{-({d+4})/({4\alpha})}\,dt\right).
\end{align*}
Since $\alpha\in (1,2)$, we can find $t_0(r_0;\w)=c_{22}(1+N_{r_0}(\w))^{-\frac{\alpha}{\alpha-1}}$ with $c_{22}>0$ small enough so that $$\displaystyle \int_0^{t_0(r;\w)}t^{-1/\alpha}dt\le \frac{1}{2c_{21}(1+N_{r_0}(\w))}.$$
Thus, noting that $d> 4(\alpha-1)$,
with such choice $t_0=t_0(r_0,\w)$, we can derive that
\begin{align*}
&\|\nabla \phi^{r_0}_{k,\theta_m,R_m}(\cdot;\w)\|_{L^2(\R^d)}\\
&\le c_{23}\Bigg(\left((1+N_{r_0}(\w))^{\frac{1}{\alpha-1}}+(1+N_{r_0}(\w))^{\frac{d}{4(\alpha-1)}}\right)
\left[\left(\int_{B(0,K_0r_0)}|\phi_{k,\theta_m,R_m}(x;\w)|^2\,dx\right)^{1/2}+1\right]\\
&\qquad\quad\,\, +(1+N_{r_0}(\w))^{\frac{d+4-4\alpha}{4(\alpha-1)}}\sqrt{K^*_{k,m}(\w)}\Bigg),
\end{align*} where
\begin{align*}
K^*_{k,m}(\w):=&\theta_m^2 \int_{B(0,K_0r_0)}\left|\nabla \phi_{k,\theta_m,R_m}(x;\w)\right|^2\,dx
+\int_{\R^d}|\phi_{k,\theta_m,R_m}(x;\w)|^2\nu(x)\,dx +\theta_m^2|\tilde \phi_{k,\theta_m,R_m}(\w)|^2\\
&+\int_{B(0,K_0r_0)}\left(\int_{\R^d}\left|\phi_{k,\theta_m,R_m}(x+z;\w)-\phi_{k,\theta_m,R_m}(x;\w)\right|^2\nu(z)\,dz\right)\,dx.
\end{align*}

According to \eqref{t2-1-6a}, \eqref{t2-1-8},  \eqref{t2-1-8a} and \eqref{t2-1-5},
\begin{align*}
\sup_{m\ge 1}\left(\Ee\left[K^*_{k,m}(\w)\right]+\Ee\left[\int_{B(0,K_0r_0)}|\phi_{k,\theta_m,R_m}(x;\w)|^2dx\right]\right)<\infty.
\end{align*}
This along with the H\"older inequality and \eqref{t2-1-3a} yields that for every $q\in [1,2)$,
\begin{align*}
&\sup_{m\ge 1}\Ee\left[\int_{B(0,r_0/2)}|\nabla \phi_{k,\theta_m,R_m}(x;\w)|^q\,dx\right]\le
c_{24}\sup_{m\ge 1}\Ee\left[\|\nabla \phi^{r_0}_{k,\theta_m,R_m}(\cdot;\w)\|_{L^2(\R^d)}^q\right]\\
&\le c_{25}\left(\Ee\left[
(1+N_{r_0}(\w))^{\max\left\{\frac{2}{(\alpha-1)(2-q)},\frac{d}{2(\alpha-1)(2-q)}\right\}}
\right]\right)^{(2-q)/2}\\
&\quad\times\sup_{m\ge 1}\left(\Ee\left[K^*_{k,m}(\w)\right]+\Ee\left[\int_{B(0,r_0)}|\phi_{k,\theta_m,R_m}(x;\w)|^2\,dx\right]\right)^{q/2}<\infty,
\end{align*}
where in the first inequality we used the fact that $\psi_{r_0}(x)=1$ for $x\in B(0,r_0/2)$.
and the second inequality is due to the fact $d+4-4\alpha<d$.
Hence, according to the co-cycle property \eqref{t2-1-6} and the stationary property of $\{\tau_{x}\}_{x\in \R^d}$, we can obtain that for all $x_0\in \R^d$,
\begin{align*}
\sup_{m\ge 1}\Ee\left[\int_{B(x_0,r_0/2)}|\nabla \phi_{k,\theta_m,R_m}(x;\w)|^q\,dx\right]&=
\sup_{m\ge 1}\Ee\left[\int_{B(0,r_0/2)}|\nabla \phi_{k,\theta_m,R_m}(x;\tau_{x_0}\w)|^q\,dx\right]\\
&=\sup_{m\ge 1}\Ee\left[\int_{B(0,r_0/2)}|\nabla \phi_{k,\theta_m,R_m}(x;\w)|^q\,dx\right]<\infty.
\end{align*}
In particular, by the estimate above and \eqref{t2-1-8}, $\{\phi_{\theta_m,R_m}\}_{m\ge 1}$ is weakly compact in $L^2(\Omega,W_{loc}^{1,q}(\R^d,\R^d);\Pp)$, and
its weak limit $\phi$ in $L^2(\Omega,W_{loc}^{\alpha/2,2}(\R^d,\R^d);\Pp)\cap L^2(\R^d\times \Omega, \R^d;\nu(z)\,dz\times \Pp)$ satisfies that for all $x_0\in \R^d$,
\begin{equation}\label{t2-1-15}
\Ee\left[
 \int_{B(x_0,r_0/2)}(|\phi(x;\w)|^q+|\nabla \phi(x;\w)|^q)\,dx\right]<\infty.
\end{equation}

Therefore, there is a $\Pp$-null set $\Lambda\subset \Omega$ such that
$
\phi(\cdot;\w)\in W_{loc}^{1,q}(\R^d,\R^d)$ for all $\w\notin \Lambda.
$
Moreover, $x\mapsto \nabla \phi_{\theta_m,R_m}(x;\w)$ is stationary and
there exists $\tilde \Phi_m\in L^q(\Omega,\R^d\times \R^d;\Pp)$ such that $\nabla \phi_{\theta_m,R_m}(x;\w)=\tilde \Phi_m(\tau_x\w)$. According to \eqref{t2-1-15} and the fact
$\phi$ is the weak limit of $\phi_{\theta_m,R_m}$, we can find
$\tilde \Phi\in L^q(\Omega,\R^d\times \R^d;\Pp)$ so that
$\nabla \phi(x;\w)=\tilde \Phi(\tau_x\w)$ for a.e.\ $x\in \R^d$ and a.s.\ $\w\in \Omega$. The proof is complete.
\end{proof}

The following lemma shows the sublinear growing property of the scaled corrector $\phi^\e(x;\w):=\e\phi\left(\frac{x}{\e};\w\right)$.

\begin{lemma}\label{l1-3}
Suppose that Assumptions $\ref{a1-2--}$ and $\ref{a1-2}$ hold.
Then
\begin{equation}\label{l1-3-1}
\lim_{\e \to 0}\e^2\int_{B(0,r)}\left|\phi\left(\frac{x}{\e};\w\right)\right|^2\,dx=0,\quad r\ge 1,\ {\rm a.s.}\ \w\in \Omega.
\end{equation}
Moreover, if additionally \eqref{t2-1-3a} holds with some $q\in (1,2)$ and
$d> 4(\alpha-1)$,
then
\begin{equation}\label{l1-3-1a}
\lim_{\e \to 0}\e^{\frac{qd}{d-q}}\int_{B(0,r)}\left|\phi\left(\frac{x}{\e};\w\right)\right|^{\frac{qd}{d-q}}\,dx=0,\quad r\ge 1,\ {\rm a.s.}\ \w\in \Omega.
\end{equation}
\end{lemma}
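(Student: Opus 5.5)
Both limits express sublinear growth of the corrector. I will derive them from the stationarity of the increments of $\phi$ and the ergodic theorem, together with a compactness step that identifies every rescaled limit as zero.

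\textbf{Setup for \eqref{l1-3-1}.} By \eqref{t2-1-1a}, $\phi$ is a cocycle with mean zero, and by \eqref{t2-1-3} its local $L^2$ norm and nonlocal energy are integrable stationary quantities. Set $\phi^\e(x;\w):=\e\phi(x/\e;\w)$. The first step is a uniform bound on the rescaled nonlocal energy of $\phi^\e$ on $B(0,2r)$, restricted to jumps $|z|\le\e$ (which are the ones that rescale like $|z|^{-d-\alpha}$). After the change of variables $x\mapsto x/\e$, this energy equals $\e^{\alpha}\e^{-d}$ times the original energy over $B(0,2r/\e)$, up to the scaling of the kernel. By the multiparameter ergodic theorem (Wiener/Tempelman) applied to the stationary density $\int|\phi(z;\tau_x\w)|^2\nu(z)\,dz$, that original energy behaves like $|B(0,2r/\e)|$ times its expectation. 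This gives
\[
\e^{-d-2}\int_{B(0,2r)}\int_{\R^d}|\phi^\e(x+z)-\phi^\e(x)|^2\nu(z/\e)\,dz\,dx\le C(\w).
\]
Hence $\phi^\e$ stays bounded in the scaled fractional energy, and it is bounded in $W^{\alpha/2,2}$ at scales larger than $\e$. The low-frequency part is handled through the quadratic moment of $\nu$. Combined with a Poincaré-type inequality, this yields precompactness in $L^2(B(0,r))$ modulo constants, provided we also control the averages $\e^{d}\int_{B(0,r/\e)}\phi\,dx\cdot\e$.

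\textbf{Identifying the limit.} Any $L^2$ limit $\bar\phi$ of $\phi^{\e_n}$ is shown to be zero as follows.
\begin{itemize}
\item Its fractional energy vanishes in the limit, since the energy is $o(1)$ at scale $r$ after dividing by $\e^{2}$. So $\bar\phi$ is affine.
\item Stationarity of the gradient-type increments together with $\Ee[\phi(z;\cdot)]=0$ forces the ergodic averages $\frac1{|B(0,R)|}\int_{B(0,R)}\phi(x+z;\w)-\phi(x;\w)\,dx$ to tend to $0$. So the slope of $\bar\phi$ is zero.
\item With the normalization $\phi(0;\w)=0$, the constant vanishes too. I would use a standard averaging argument (as in \cite[Proposition 4.3]{PZ} or \cite{BCKW}) to pin the constant via the ergodic theorem applied to $\phi(z;\tau_x\w)$ on $|z|\le1$.
\end{itemize}
This limit-identification step (affine $\Rightarrow 0$) combined with compactness is where I expect the main work, because $\phi$ itself is not stationary.

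\textbf{The $L^{qd/(d-q)}$ statement \eqref{l1-3-1a}.} Under the additional hypotheses, Lemma \ref{l1-5} gives $\nabla\phi(x;\w)=\tilde\Phi(\tau_x\w)$ with $\tilde\Phi\in L^q(\Omega;\Pp)$. By the ergodic theorem,
\[
\int_{B(0,r)}|\nabla\phi^\e|^q\,dx=\e^{d}\int_{B(0,r/\e)}|\tilde\Phi(\tau_y\w)|^q\,dy\to|B(0,r)|\,\Ee|\tilde\Phi|^q .
\]
So $\phi^\e$ is bounded in $W^{1,q}(B(0,r))$, using \eqref{l1-3-1} for the $L^2$ and hence $L^q$ part. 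The Sobolev embedding with $p_0=qd/(d-q)$ then gives boundedness in $L^{p_0}(B(0,r))$.

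To get convergence to $0$ rather than mere boundedness, I would interpolate: convergence to $0$ in $L^2$ plus boundedness in $W^{1,q}$ would give $L^{p}$ convergence only for $p<p_0$. Instead I would argue directly. By the previous paragraph, every weak $W^{1,q}$ limit is $0$. The Sobolev–Poincaré inequality on the ball, $\|\phi^\e-(\phi^\e)_B\|_{L^{p_0}}\le C\|\nabla\phi^\e\|_{L^q}$, does not by itself give smallness. The right tool is the averaged gradient: $\nabla\phi^\e(x)=\tilde\Phi(\tau_{x/\e}\w)$ converges weakly in $L^q$ to $\Ee\tilde\Phi=0$ (the mean vanishes by the cocycle property and zero expectation). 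I would then apply a concentration-compactness / Vitali argument: the $L^{p_0}$ mass of $\phi^\e$ cannot concentrate, because $|\nabla\phi^\e|^q$ is equi-integrable on $B(0,r)$ by the ergodic theorem applied to $|\tilde\Phi|^q\I_{\{|\tilde\Phi|>M\}}$. Splitting $\tilde\Phi=\tilde\Phi\I_{\{|\tilde\Phi|\le M\}}+$ remainder and using the linearity of the solution of $\nabla u=$ stationary field gives two pieces:
\begin{itemize}
\item the bounded part yields a function with a uniform $W^{1,p}$ bound for all $p$, hence it converges to $0$ in $L^{p_0}$;
\item the remainder is small in $L^{p_0}$ by Sobolev with gradient norm $\lesssim(\Ee|\tilde\Phi|^q\I_{\{|\tilde\Phi|>M\}})^{1/q}$.
\end{itemize}
This truncation/equi-integrability step is the main obstacle, since the truncated field need not be curl-free. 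If it fails, I would fall back on the Helmholtz projection of the truncated field, whose error is controlled in $L^q$.
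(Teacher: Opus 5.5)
For \eqref{l1-3-1} the paper gives no argument of its own; it imports \cite[Proposition 4.4]{PZ}. Most of your self-contained skeleton is sound. The rescaled energy $\int_{B(0,2r)}\int_{\R^d}|\phi(z;\tau_{x/\e}\w)|^2\nu(z)\,dz\,dx$ is a.s.\ bounded by the ergodic theorem and \eqref{t2-1-2}. The small jumps then give a nonlocal Poincar\'e inequality and Kolmogorov-type compactness for $\phi^\e-m_\e(r)$, where $m_\e(\rho)$ is the average of $\phi^\e$ over $B(0,\rho)$. Every limit has zero gradient, because $\e^{-1}(\phi^\e(x+\e z)-\phi^\e(x))=\phi(z;\tau_{x/\e}\w)$, tested against $g(x)h(z)$, converges to $\Ee\big[\int_{\R^d}\phi(z;\cdot)h(z)\,dz\big]\int_{\R^d}g\,dx=0$.

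Your first bullet is not right: the rescaled energy tends to $|B|\,\Ee\int|\phi(z)|^2\nu(z)\,dz$ and does not vanish. Moreover, a vanishing first-order energy would give a constant, not an affine map. The bullet is redundant given the second one, so drop it.

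The third bullet is a genuine gap. $\phi(\cdot;\w)$ is only in $L^2_{loc}$, so ``$\phi(0;\w)=0$'' carries no information. The fact that the average of $\phi^\e$ over $B(0,\e)$ tends to $0$ cannot be detected by convergence in $L^2(B(0,r))$. The averages are killed instead by self-similarity. A change of variables gives exactly $m_\e(r/2)=\frac12 m_{2\e}(r)$, while $\phi^\e-m_\e(r)\to0$ in $L^2(B(0,r))$ gives $\eta(\e):=m_\e(r)-m_\e(r/2)\to0$. Iterate $m_\e(r)=\frac12 m_{2\e}(r)+\eta(\e)$ until $2^k\e\in(\e_1,2\e_1]$. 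This yields $\limsup_{\e\to0}|m_\e(r)|\le 2\sup_{s\le\e_1}|\eta(s)|$, and letting $\e_1\to0$ gives $m_\e(r)\to0$.

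For \eqref{l1-3-1a} your first steps coincide with the paper's: the ergodic theorem for $|\nabla\phi^\e|^q=|\tilde\Phi(\tau_{\cdot/\e}\w)|^q$, Sobolev--Poincar\'e, and the vanishing of weak $W^{1,q}$ limits by \eqref{l1-3-1}. You then diverge, rightly. The paper finishes by invoking a compact embedding of $W^{1,q}_{loc}$ into $L^{qd/(d-q)}_{loc}$ (\cite[Theorem 9.16]{Bre}). Rellich--Kondrachov, however, gives compactness only for exponents strictly below $p_0=qd/(d-q)$, since at $p_0$ mass can concentrate. You noticed exactly this.

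Your extra input is exactly what is needed: asymptotic equi-integrability of $|\nabla\phi^\e|^q$, from the ergodic theorem applied to $|\tilde\Phi|^q\I_{\{|\tilde\Phi|>M\}}$ for countably many $M$. Your proposed use of it fails as written, though. $\tilde\Phi\I_{\{|\tilde\Phi|\le M\}}$ is not curl-free, so it is not a gradient. The Helmholtz fallback can be made to work after localizing with a cutoff, but it is heavy.

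A direct route: cover $B(0,r)$ by balls $B_i$ of radius $\rho<1$ with bounded overlap, and apply the scale-invariant Sobolev--Poincar\'e inequality on each ball:
\[
\int_{B(0,r)}|\phi^\e|^{p_0}\,dx\le C\Big(\max_i\int_{B_i}|\nabla\phi^\e|^q\,dx\Big)^{\frac{p_0}{q}-1}\int_{B(0,r+1)}|\nabla\phi^\e|^q\,dx+C\sum_i|B_i|^{1-\frac{p_0}{2}}\Big(\int_{B(0,r+1)}|\phi^\e|^2\,dx\Big)^{\frac{p_0}{2}}.
\]
Here $p_0>q$. Moreover $\int_{B_i}|\nabla\phi^\e|^q\,dx\le M^q|B(0,\rho)|+\int_{B(0,r+1)}\big(|\tilde\Phi|^q\I_{\{|\tilde\Phi|>M\}}\big)(\tau_{x/\e}\w)\,dx$, and the last integral tends to $|B(0,r+1)|\,\Ee\big[|\tilde\Phi|^q\I_{\{|\tilde\Phi|>M\}}\big]$. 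Hence the first term is small for all small $\e$ once $M$ is large and then $\rho$ is small. For fixed $\rho$ there are finitely many balls, so the second term tends to $0$ by \eqref{l1-3-1}.
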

\begin{proof}
By applying  \eqref{t2-1-2} and \eqref{t2-1-8}, and following
the proof of \cite[Proposition 4.4]{PZ} (in particular, in the proof of
\cite[Proposition 4.4]{PZ} the symmetric property of the random operator $L^\w$ is not required), we can establish the desired conclusion \eqref{l1-3-1}.

Define $\phi^\e(x;\w):=\e\phi\left(\frac{x}{\e};\w\right)$. If additionally
\eqref{t2-1-3a} holds for some $q\in (1,2)$ and $d> 4(\alpha-1)$, then, by Lemma \ref{l1-5} and the ergodic theorem, we obtain
$$
\lim_{\e \to 0}\int_{B(0,r)}|\nabla \phi^\e(x;\w)|^q \,dx=|B(0,r)|\cdot \Ee[|\tilde \Phi|^q]<\infty,\quad r\ge 1.
$$
Recall that $d\ge2$ and $q\in (1,2)$. According to the Sobolev inequality (cf.\ see \cite[Corollary 9.14, p.\ 284]{Bre}), it holds that
$$
\left(\int_{B(0,r)}\left|\phi^\e(x;\w)-\oint_{B(0,r)}\phi^\e(\cdot;\w)\right|^{\frac{qd}{d-q}}\,dx\right)^{\frac{d-q}{d}}\le
c_1(r)\int_{B(0,r)}|\nabla \phi^\e(x;\w)|^q\, dx,
$$
where $\displaystyle\oint_{B(0,r)}\phi^\e(\cdot;\w):=|B(0,r)|^{-1}\displaystyle\int_{B(0,r)}\phi^\e(z;\w)\,dz$.
Hence, for any  $r\ge 1$, $\left\{\phi^\e(x;\w)-\displaystyle\oint_{B(0,r)}\phi^\e(\cdot;\w)\right\}_{\e\in (0,1)}$ is weakly compact
in $W_{loc}^{1,q}(\R^d,\R^d)$. Therefore, for any sequence, there exists its subsequence,
which is denoted by
$\left\{\phi^{\e_m}(x;\w)-\displaystyle\oint_{B(0,r)}\phi^{\e_m}(\cdot;\w)\right\}_{m\ge 1}$, weakly converges to
some $\hat \phi\in W_{loc}^{1,q}(\R^d,\R^d)$ as $m\to\infty$.
On the other hand, according to \eqref{l1-3-1},
$\left\{\phi^{\e_m}(x;\w)-\displaystyle\oint_{B(0,r)}\phi^{\e_m}(\cdot;\w)\right\}_{m\ge 1}$ converges strongly
in $L^2(B(0,r);dx)$ to $0$.
Therefore, $\hat \phi(x;\w)=0$ for a.e. $x\in B(0,r)$ and a.s. $\w\in \Omega$.

Furthermore, due to the compact embedding from
$L^{\frac{qd}{d-q}}_{loc}(\R^d,\R^d)$ to $W_{loc}^{1,q}(\R^d,\R^d)$ (cf.\ see \cite[Theorem 9.16, p.\ 285]{Bre}), there also exists one of its subsequence,
which is convergent in $L^{\frac{qd}{d-q}}_{loc}(\R^d,\R^d)$ to $\hat \phi$.
Since the subsequence is arbitrary, we can finally deduce that
the only possible limit $\hat \phi$ is zero, and so \eqref{l1-3-1a} is proved.
\end{proof}

We now establish the relation between the corrector $\phi$ and the random drift term $b(x;\w)$. Let $\tilde \Phi=\nabla \phi\in L^q(\Omega,\R^d\times\R^d;\Pp)$
be that in Lemma $\ref{l1-5}$.

\begin{lemma}\label{l1-4}
Suppose that Assumptions $\ref{a1-2--}$ and $\ref{a1-2}$ hold. Assume
that $d> 4(\alpha-1)$,
\eqref{t2-1-3a} and \begin{equation}\label{e:aabb}
\Ee[|\tilde H_{jl}|^{q/(q-1)}]<\infty,\quad 1\le j,l\le d
\end{equation} hold with some $q\in (1,2)$.
Then,
\begin{equation}\label{l1-4-1}
\sum_{j=1}^d\Ee[\tilde \Phi_{kj}(\w)\tilde H_{kj}(\w)]=-
\frac{1}{2}\Ee\left[\int_{\R^d}|\phi_k(z;\w)|^2\nu(z)\,dz\right],\quad 1\le k\le d.
\end{equation}
\end{lemma}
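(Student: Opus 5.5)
The plan is to derive \eqref{l1-4-1} as the $m\to\infty$ limit of the energy identity for the approximating correctors $\tilde\phi_{k,\theta_m,R_m}$ from the proof of Theorem \ref{t2-1}. We obtain one inequality from lower semicontinuity, and the reverse inequality by testing the limiting corrector equation against the approximants.

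\emph{Step 1 (energy identity at level $m$).} Take $\tilde F=\tilde\phi_{k,\theta_m,R_m}$ in \eqref{t2-1-4}. By \eqref{e:formula1} the drift term vanishes, which gives
\begin{align*}
\theta_m\Ee[|\tilde\phi_{k,\theta_m,R_m}|^2]+\theta_m\sum_{j}\Ee[|D_j\tilde\phi_{k,\theta_m,R_m}|^2]+\frac12\Ee\Big[\int_{\R^d}|\phi_{k,\theta_m,R_m}(z;\w)|^2\nu(z)\,dz\Big]=-\Ee[\tilde b_k\tilde\phi_{k,\theta_m,R_m}].
\end{align*}
Next we rewrite the right-hand side. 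Use $\tilde b_k=\sum_l D_l\tilde H_{kl}$, integrate by parts via \eqref{e1-2a}, and use $\tilde\Phi^m_{kl}:=-D_l\tilde\phi_{k,\theta_m,R_m}$; by \eqref{e1-3} this is the stationary representative of $\partial_l\phi_{k,\theta_m,R_m}$. The right-hand side then equals $-\sum_l\Ee[\tilde H_{kl}\tilde\Phi^m_{kl}]$.

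\emph{Step 2 (passing to the limit).} The proof of Lemma \ref{l1-5} gives $\sup_m\Ee[|\tilde\Phi^m|^q]<\infty$ together with $\tilde\Phi^m\rightharpoonup\tilde\Phi$ weakly in $L^q(\Omega)$ along a subsequence. Since $\tilde H_{kl}\in L^{q/(q-1)}$ by \eqref{e:aabb}, we get $\sum_l\Ee[\tilde H_{kl}\tilde\Phi^m_{kl}]\to\sum_l\Ee[\tilde H_{kl}\tilde\Phi_{kl}]$. The $\theta_m$-terms are nonnegative. Moreover $\phi_{k,\theta_m,R_m}\rightharpoonup\phi_k$ weakly in $L^2(\R^d\times\Omega;\nu\,dz\times\Pp)$, so weak lower semicontinuity of the norm yields
\[
\sum_j\Ee[\tilde\Phi_{kj}\tilde H_{kj}]\le-\frac12\Ee\Big[\int|\phi_k(z;\w)|^2\nu(z)\,dz\Big].
\]
Equality would follow immediately if we also knew $\theta_m(\cdots)\to0$ and that the norms converge. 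Since we only have weak convergence, we need the reverse inequality separately.

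\emph{Step 3 (reverse inequality).} Rewrite \eqref{t2-1-1} in stationary form, as at the end of the proof of Theorem \ref{t2-1}. Using the co-cycle property and Lemma \ref{l1-5}, this gives, for bounded $\tilde G\in\D$,
\begin{align*}
\frac12\Ee\Big[\int\phi_k(z;\w)(\tilde G(\tau_z\w)-\tilde G(\w))\nu(z)\,dz\Big]+\sum_j\Ee[\tilde b_j\tilde\Phi_{kj}\tilde G]=-\Ee[\tilde b_k\tilde G].
\end{align*}
By density, extend this to $\tilde G=\tilde\phi_{k,\theta_m,R_m}$, after truncating via $\rho_R$ if needed. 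The first term tends to $\frac12\Ee[\int|\phi_k|^2\nu]$ by weak convergence, because $\phi_k$ is a fixed element of $L^2(\nu\,dz\times\Pp)$. As in Step 1, the right-hand side equals $-\sum_l\Ee[\tilde H_{kl}\tilde\Phi^m_{kl}]$, which tends to $-\sum_l\Ee[\tilde H_{kl}\tilde\Phi_{kl}]$. The identity \eqref{l1-4-1} then follows once the drift term $\sum_j\Ee[\tilde b_j\tilde\Phi_{kj}\tilde\phi_{k,\theta_m,R_m}]$ is shown to tend to $0$.

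\emph{Main obstacle.} The hard step is showing the drift term vanishes. Formally it equals $\Ee[\tilde b\cdot\nabla(\tfrac12\phi_k^2)]=0$ by divergence-freeness, but the available integrability ($\tilde\Phi\in L^q$, $\tilde H\in L^{q/(q-1)}$, $\tilde b$ controlled only locally through $N_{r_0}$) does not directly justify the integrations by parts. The plan is to write the term as $-\sum_{j,l}\Ee[\rho_R(\tilde H_{jl})\,\tilde\Phi_{kj}\tilde\Phi^m_{kl}]$ plus an error. The main term vanishes by the antisymmetry $\rho_R(\tilde H_{jl})=-\rho_R(\tilde H_{lj})$ when $\tilde\Phi^m$ is replaced by $\tilde\Phi$; the difference is handled using the $m$-level equation with the same truncation $R_m$, mirroring \eqref{e:comment}. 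The error is controlled by $\Ee[|\tilde H-\rho_R(\tilde H)|^{q/(q-1)}]\to0$ via H\"older's inequality. If this symmetrisation fails to close, the fallback is to mollify in $x$ (as in the proof of Lemma \ref{l1-5}) and use the local $W^{1,q}$ bounds together with the bound on $N_{r_0}$ from \eqref{t2-1-3a} to pass to the limit.
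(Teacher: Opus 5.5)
\paragraph{Steps 1 and 2 are sound.} Testing \eqref{t2-1-4} with $\tilde\phi_{k,\theta_m,R_m}$ and rewriting the right-hand side as $-\sum_l\Ee[\tilde H_{kl}\tilde\Phi^m_{kl}]$ works. So does using $\tilde\Phi^m\rightharpoonup\tilde\Phi$ in $L^q(\Omega)$ together with weak lower semicontinuity. This gives one half of the identity,
$\sum_j\Ee[\tilde\Phi_{kj}\tilde H_{kj}]\le-\frac12\Ee[\int_{\R^d}|\phi_k(z;\w)|^2\nu(z)\,dz]$.

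\paragraph{The gap is in Step 3.} The term $\sum_j\Ee[\tilde b_j\tilde\Phi_{kj}\tilde\phi_{k,\theta_m,R_m}]$ couples the gradient of the \emph{limit} corrector with the \emph{approximant}, and divergence-freeness gives nothing for such a cross term. The antisymmetric cancellation $\sum_{j,l}\tilde H_{jl}a_ja_l=0$ needs the same function in both slots. Moreover, $\tilde\Phi_k$ has no stationary potential, which is exactly why $\phi$ is only a co-cycle, so ``$\Ee[\tilde b\cdot\nabla(\frac12\phi_k^2)]$'' is not a meaningful quantity.

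Your symmetrisation leaves $\sum_{j,l}\Ee[\tilde H_{jl}\tilde\Phi_{kj}(\tilde\Phi^m_{kl}-\tilde\Phi_{kl})]$. With only weak convergence of $\tilde\Phi^m$ in $L^q$, $q<2$, you would need $\tilde H_{jl}\tilde\Phi_{kj}\in L^{q/(q-1)}$, which is not available. Already the product of $\tilde H\in L^{q/(q-1)}$ with two gradients in $L^q$ is not controlled by H\"older. Truncating $\tilde H$ by $\rho_R$ does not help, since $\rho_R(\tilde H)\tilde\Phi$ is still only in $L^q$.

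Comparing with your Step 1 identity, making this term vanish is equivalent to $\theta_m$-terms $\to0$ together with strong convergence $\phi_{k,\theta_m,R_m}\to\phi_k$ in $L^2(\nu(z)\,dz\times\Pp)$. That is precisely the statement you are trying to prove, so the argument is circular.

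The density step is also unjustified. $\tilde\phi_{k,\theta_m,R_m}$ is only in $L^2(\Omega)$, with norm of order $\theta_m^{-1/2}$, so its pairing with $\tilde b_j\tilde\Phi_{kj}$ need not be finite. Once you truncate $\tilde G$, the first term is no longer $\frac12\Ee[\int\phi_k\phi_{k,\theta_m,R_m}\nu]$. Your mollification fallback does not touch this, because the obstruction is the cross term, not regularity.

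\paragraph{The missing idea.} Test the limit equation \eqref{t2-1-1}, rescaled, with a function of the limit corrector itself in physical space, as the paper does. Take $f=\tilde\rho_R(\phi_k^\e)\psi_r$ with $\phi^\e=\e\phi(\cdot/\e)$, a bounded truncation $\tilde\rho_R$ and a unit-mass cut-off $\psi_r$. Then the three terms behave as follows.
\begin{itemize}
\item The right-hand side, after moving the derivative from $H_{kj}(\cdot/\e)$ onto $\tilde\rho_R(\phi_k^\e)\psi_r$, tends to $\sum_j\Ee[\tilde H_{kj}\tilde\Phi_{kj}]$ by the ergodic theorem. This needs only $\tilde H\in L^{q/(q-1)}$ and $\tilde\Phi\in L^q$.
\item The nonlocal term tends to $\frac12\Ee[\int|\phi_k|^2\nu]$ via the co-cycle property, using \eqref{t2-1-2a}--\eqref{a1-2-1} for the cut-off part.
\item In the drift term the \emph{same} function sits in both slots, so antisymmetry of $H$ leaves only $\sum_{j,l}\int H_{lj}(x/\e)\tilde\rho_R(\phi_k^\e)\,\partial_l\phi_k^\e\,\partial_j\psi_r\,dx$.
\end{itemize}
All truncation and cut-off errors vanish as $\e\to0$, and then $R\to\infty$, by the sublinearity \eqref{l1-3-1}, i.e.\ $\phi_k^\e\to0$ in $L^2_{loc}$. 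Sublinearity replaces the missing stationary potential, and $\tilde\rho_R$ makes the triple products integrable. This gives the equality directly, with no approximants, so Step 2 becomes unnecessary.
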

\begin{proof}
For every $\e\in (0,1)$, let $\phi^\e(x;\w):=\e\phi\left(\frac{x}{\e};\w\right)$. By \eqref{t2-1-1}, for every $f\in C_c^1(\R^d)$ and $1\le k \le d$,
\begin{equation}\label{l1-4-2}
\begin{split}
&\frac{\e^{-d-1}}{2}\int_{\R^d}\int_{\R^d}\left(\phi_k\left(\frac{x+z}{\e};\w\right)-\phi_k\left(\frac{x}{\e};\w\right)\right)
\left(f(x+z)-f(x)\right)\nu\left(\frac{z}{\e}\right)\,dz\,dx\\
&-\e^{-1}\int_{\R^d}\left\langle b\left(\frac{x}{\e};\w\right),\nabla f(x)\right\rangle
\phi_k^\e(x;\w)\,dx=-\e^{-1}\int_{\R^d}b_k\left(\frac{x}{\e};\w\right)f(x)\,dx.
\end{split}
\end{equation}

For $r\ge 1$, choose a function $0\le \psi_r\in C_c^\infty(\R^d)$ such that ${\rm supp}[\psi_r]\subset B(0,r)$ and $\displaystyle\int_{\R^d}\psi_r(x)\,dx=1$. For $R\ge 1$, take $\tilde \rho_R:\R \to [-2R,2R]$ to be a $C^1$-function
such that $\tilde \rho_R(s)=s$ for $-R\le s\le R$, $\tilde \rho_R(s)=2R$ for $s\ge 2R$,
$\tilde \rho_R(s)=-2R$ for $s\le -2R$, and $|\tilde \rho_R(s)|\le 2|s|$ and $|\tilde \rho_R'(s)|\le 2$ for all $s\in \R$.
Recall that, by Theorem \ref{t2-1} and Lemma \ref{l1-5}, $\phi(\cdot;\w)\in W_{loc}^{\alpha/2,2}(\R^d,\R^d)\cap W_{loc}^{1,q}(\R^d,\R^d)$. By the standard approximation
procedure, we can take $f(x)=\tilde \rho_R\left(\phi_k^\e(x;\w)\right)\psi_r(x)$ in \eqref{l1-4-2}, and obtain that
\begin{align*}
I_{1}^{\e,R}(\w)+I_2^{\e,R}(\w)=-I_3^{\e,R}(\w),
\end{align*}
where, for fixed $1\le k\le d$,
\begin{align*}
I_{1}^{\e,R}(\w):&=\frac{\e^{-d-1}}{2}\int_{\R^d}\int_{\R^d}\left(\phi_k\left(\frac{x+z}{\e};\w\right)-\phi_k\left(\frac{x}{\e};\w\right)\right)\\
&\qquad\qquad\qquad\qquad\times\left(\tilde \rho_R\left(\phi_k^\e(x+z;\w)\right)\psi_r(x+z)-\tilde \rho_R\left(\phi_k^\e(x;\w)\right)\psi_r(x)\right)\nu\left(\frac{z}{\e}\right)\,dz\,dx\\
&=\frac{\e^{-d-1}}{2}\int_{\R^d}\int_{\R^d}\left(\phi_k\left(\frac{x+z}{\e};\w\right)-\phi_k\left(\frac{x}{\e};\w\right)\right)
\left(\tilde \rho_R\left(\phi_k^\e(x+z;\w)\right)-\tilde \rho_R\left(\phi_k^\e(x;\w)\right)\right)\psi_r(x)\nu\left(\frac{z}{\e}\right)\,dz\,dx\\
&\quad+\frac{\e^{-d-1}}{2}\int_{\R^d}\int_{\R^d}\left(\phi_k\left(\frac{x+z}{\e};\w\right)-\phi_k\left(\frac{x}{\e};\w\right)\right)
\left(\psi_r(x+z)-\psi_r(x)\right)\tilde \rho_R\left(\phi_k^\e(x;\w)\right)\nu\left(\frac{z}{\e}\right)\,dz\,dx\\
&=:I_{11}^{\e,R}(\w)+I_{12}^{\e,R}(\w)
\end{align*} and
\begin{align*}
I_{2}^{\e,R}(\w):&=-\e^{-1}\int_{\R^d}\left\langle b\left(\frac{x}{\e};\w\right),
\nabla \left(\tilde \rho_R\left(\phi_k^\e(\cdot;\w)\right)\psi_r(\cdot)\right)(x)\right\rangle\phi_k^\e(x;\w)\,dx\\
&=\e^{-1}\int_{\R^d}\left\langle b\left(\frac{x}{\e};\w\right), \nabla \phi_k^\e(x;\w)\right\rangle
\tilde \rho_R\left(\phi_k^\e(x;\w)\right)\psi_r(x)\,dx,\\
I_3^{\e,R}(\w):&=\e^{-1}\int_{\R^d}b_k\left(\frac{x}{\e};\w\right)\tilde \rho_R\left(\phi_k^\e(x;\w)\right)\psi_r(x)\,dx.
\end{align*} Here in the second equality for the estimate of $I_1^{\e,R}(\w)$ we changed the variables $x+z\mapsto x$ and $z\mapsto -z$, and used the fact that $\nu(z)=\nu(-z)$ for all $z\in \R^d$; and in the second equality for the estimate of $I_2^{\e,R}(\w)$ we used the anti-symmetry of $\{\tilde H_{jl}\}_{1\le j,l\le d}$ and the integration by parts formula.

Below, we will estimate $I_{1}^{\e,R}(\w)$, $I_2^{\e,R}(\w)$ and $I_3^{\e,R}(\w)$ respectively.

(1) We first consider $I_3^{\e,R}(\w)$. Since $b_k(x;\w)=-\sum_{j=1}^d \frac{\partial H_{kj}(x;\w)}{\partial x_j}$,
\begin{align*}
I_3^{\e,R}(\w)&=-\sum_{j=1}^d\int_{\R^d} \frac{\partial }{\partial x_j}\left(H_{kj}\left(\frac{\cdot}{\e};\w\right)\right)(x)
\tilde \rho_R\left(\phi_k^\e(x;\w)\right)\psi_r(x)\,dx\\
&=\sum_{j=1}^d \int_{\R^d}H_{kj}\left(\frac{x}{\e};\w\right)\frac{\partial \phi_k\left(\frac{x}{\e};\w\right)}{\partial x_j}
\tilde \rho_R'\left(\phi_k^\e(x;\w)\right)\psi_r(x)\,dx +\sum_{j=1}^d
\int_{\R^d}H_{kj}\left(\frac{x}{\e};\w\right)\tilde \rho_R\left(\phi_k^\e(x;\w)\right)\frac{\partial \psi_r(x)}{\partial x_j}\,dx\\
&=:I_{31}^{\e,R}(\w)+I_{32}^{\e,R}(\w).
\end{align*}
Using the fact that $|\tilde \rho_R(s)|\le 2|s|$ for all $s\in \R$ and the ergodic theorem,  for every $R\ge 1$,
\begin{align*}
\varlimsup_{\e \to 0}|I_{32}^{\e,R}(\w)|&\le c_1\sum_{j=1}^d
\left(\varlimsup_{\e \to 0}\int_{B(0,r)}\left|H_{kj}\left(\frac{x}{\e};\w\right)\right|^2\,dx\right)^{1/2}
\left(\varlimsup_{\e \to 0}\int_{B(0,r)}|\phi_k^\e(x;\w)|^2\,dx\right)^{1/2}\\
&=c_1\sum_{j=1}^d|B(0,r)|^{1/2}\Ee[|\tilde H_{kj}|^2]^{1/2} \left(\varlimsup_{\e \to 0}\int_{B(0,r)}|\phi_k^\e(x;\w)|^2\,dx\right)^{1/2}=0,
\end{align*}
where the last equality follows from \eqref{l1-3-1} and \eqref{e:aabb} (that implies $\Ee[|\tilde H_{kj}|^2]<\infty$ since $q/(q-1)>2$ for $q\in (1,2)$).

For every $M\ge1$, define
\begin{align*}
A_{M,\e,1}(\w)=\{x\in B(0,r): |\phi_k^\e(x;\w)|\ge M\},\quad A_{M,\e,2}(\w)=\left\{x\in B(0,r):
\left|\nabla \phi_k^\e(x;\w)\right|\ge M\right\}.
\end{align*}
Then, by the fact that $\tilde \rho_R'\left(\phi^\e_k(x;\w)\right)=1$ for every $x\in A^c_{R,\e,1}(\w)$,
\begin{align*}
I_{31}^{\e,R}(\w)=&\sum_{j=1}^d \int_{\R^d}H_{kj}\left(\frac{x}{\e};\w\right)\frac{\partial \phi_k\left(\frac{x}{\e};\w\right)}{\partial x_j}
\psi_r(x)\,dx\\
&-\sum_{j=1}^d
\int_{A_{R,\e,1}(\w)}H_{kj}\left(\frac{x}{\e};\w\right)\frac{\partial \phi_k\left(\frac{x}{\e};\w\right)}{\partial x_j}(1-\tilde \rho_R'\left(\phi_k^\e(x;\w)\right))
\psi_r(x)\,dx.
\end{align*}
Note that $\nabla \phi(x;\w)=\tilde \Phi(\tau_x \w)$ with $\tilde \Phi\in L^q(\Omega,\R^d\times \R^d;\Pp)$, and $\tilde H_{kj}\in L^{\frac{q}{q-1}}(\Omega;\Pp)$. According to the ergodic theorem
(see e.g. \cite[Proposition 2.1]{CCKW1}), we get
\begin{align*}
\lim_{\e \to 0}\int_{\R^d}H_{kj}\left(\frac{x}{\e};\w\right)\frac{\partial \phi_k\left(\frac{x}{\e};\w\right)}{\partial x_j}\psi_r(x)\,dx=
\left(\int_{\R^d}\psi_r(x)\,dx\right)\Ee[\tilde H_{kj}(\w)\tilde \Phi_{kj}(\w)]
=\Ee[\tilde H_{kj}(\w)\tilde \Phi_{kj}(\w)].
\end{align*}
On the other hand,  since $|\tilde \rho_R'(s)|\le 2$ for all $s\in \R$, it holds that for every $M\ge 1$ and $1\le j\le d$,
\begin{align*}
&\left|\int_{A_{R,\e,1}(\w)}H_{kj}\left(\frac{x}{\e};\w\right)\frac{\partial \phi_k\left(\frac{x}{\e};\w\right)}{\partial x_j}(1-\tilde \rho_R'\left(\phi_k^\e(x;\w)\right))\psi_r(x)\,dx\right|\\
&\le 3\int_{A_{R,\e,1}(\w)}\left|H_{kj}\left(\frac{x}{\e};\w\right)\right|\left|\frac{\partial \phi_k\left(\frac{x}{\e};\w\right)}{\partial x_j}\right|
\psi_r(x)\,dx\\
&\le c_2\int_{A_{R,\e,1}(\w)\cap A_{M,\e,2}(\w)^c\cap B(0,r)}
\left|H_{kj}\left(\frac{x}{\e};\w\right)\right|\left|\frac{\partial \phi_k^\e\left(x;\w\right)}{\partial x_j}\right|\,dx + c_2\int_{A_{M,\e,2}(\w)\cap B(0,r)}\left|H_{kj}\left(\frac{x}{\e};\w\right)\right|
\left|\tilde \Phi_{kj}\left(\tau_{\frac{x}{\e}}\w\right)\right|\,dx.
\end{align*}
By the definition of $A_{M,\e,2}(\w)$, we have
\begin{align*}
&\int_{A_{R,\e,1}(\w)\cap A_{M,\e,2}(\w)^c\cap B(0,r)}
\left|H_{kj}\left(\frac{x}{\e};\w\right)\right|\left|\frac{\partial \phi_k^\e\left(x;\w\right)}{\partial x_j}\right|\,dx\\
&\le M\int_{A_{R,\e,1}(\w)\cap B(0,r)}\left|H_{kj}\left(\frac{x}{\e};\w\right)\right|\,dx\le M\left|A_{R,\e,1}(\w)\cap B(0,r)\right|^{1/2}
\left(\int_{B(0,r)}\left|H_{kj}\left(\frac{x}{\e};\w\right)\right|^2\,dx\right)^{1/2}\\
&\le \frac{M}{R}\left(\int_{B(0,r)}|\phi_k^\e(x;\w)|^2\,dx\right)^{1/2}
\left(\int_{B(0,r)}\left|H_{kj}\left(\frac{x}{\e};\w\right)\right|^2\,dx\right)^{1/2},
\end{align*}
where in the last inequality we used
\begin{align}\label{l1-4-3}
\left|A_{R,\e,1}(\w)\cap B(0,r)\right|\le \frac{1}{R^2}\int_{B(0,r)}|\phi_k^\e(x;\w)|^2\,dx.
\end{align}
Thus, according to \eqref{l1-3-1}, \eqref{e:aabb} and the ergodic theorem, we obtain that for every fixed $R\ge 1$ and $M\ge 1$,
\begin{align*}
\lim_{\e\to 0}\int_{A_{R,\e,1}(\w)\cap A_{M,\e,2}(\w)^c\cap B(0,r)}
\left|H_{kj}\left(\frac{x}{\e};\w\right)\right|\left|\frac{\partial \phi_k^\e\left(x;\w\right)}{\partial x_j}\right|\,dx=0.
\end{align*}
Note again that $\nabla \phi_k(x;\w)=\tilde \Phi_k(\tau_x \w)$. Applying the ergodic theorem, we derive
\begin{align*}
&\lim_{M \to \infty}\lim_{\e \to 0}\int_{B(0,r)}\int_{A_{M,\e,2}(\w)\cap B(0,r)}\left|H_{kj}\left(\frac{x}{\e};\w\right)\right|
\left|\tilde \Phi_{kj}\left(\tau_{\frac{x}{\e}}\w\right)\right|\,dx\\
&=\lim_{M \to \infty}\lim_{\e \to 0}\int_{B(0,r)}\left|H_{kj}\left(\frac{x}{\e};\w\right)\right|
\left|\tilde \Phi_{kj}\left(\tau_{\frac{x}{\e}}\w\right)\right|\I_{\left\{\left|\tilde \Phi_k\left(\tau_{\frac{\cdot}{\e}\w}\right)\right|\ge M\right\}}(x)\,dx\\
&=|B(0,r)|\lim_{M \to \infty}\Ee\left[|\tilde H_{kj}(\w)\tilde \Phi_{kj}(\w)|\I_{\{|\tilde \Phi_{k}|\ge M\}}(\w)\right]=0,
\end{align*}
where the last equality follows from the facts that $\tilde H_{kj}\in L^{\frac{q}{q-1}}(\Omega;\Pp)$ and
$\tilde \Phi_{kj}\in L^q(\Omega;\Pp)$ again.

Therefore, combining with all the estimates above, firstly letting $\e \to 0$ and then $M \to \infty$, we have
for every fixed $R\ge 1$,
\begin{align*}
\lim_{\e \to 0}I_3^{\e,R}(\w)=\sum_{j=1}^d\Ee[\tilde H_{kj}(\w)\tilde \Phi_{kj}(\w)].
\end{align*}

(2) In this part, we consider $I_1^{\e,R}(\w)$. By the change of variable, we derive
\begin{align*}
&I_{11}^{\e,R}(\w)\\
&=\frac{\e^{-1}}{2}\int_{\R^d}\int_{\R^d}
\left(\phi_k\left(\frac{x}{\e}+z;\w\right)-\phi_k\left(\frac{x}{\e};\w\right)\right)
\left(\tilde \rho_R\left(\e\phi_k\left(\frac{x}{\e}+z;\w\right)\right)-\tilde \rho_R\left(\e\phi_k\left(\frac{x}{\e};\w\right)\right)\right)
\psi_r(x)\nu\left(z\right)\,dz\,dx\\
&=\frac{1}{2}\int_{\R^d}\left(\int_{\R^d}\left|\phi_k\left(z;\tau_{\frac{x}{\e}}\w\right)\right|^2\nu(z)dz\right)\psi_r(x)\,dx\\
&\quad-\frac{\e^{-1}}{2}\int_{\R^d}\int_{\R^d}
\left(\phi_k\left(\frac{x}{\e}+z;\w\right)-\phi_k\left(\frac{x}{\e};\w\right)\right)
\left(\chi_R\left(\e\phi_k\left(\frac{x}{\e}+z;\w\right)\right)-\chi_R\left(\e\phi_k\left(\frac{x}{\e};\w\right)\right)\right)
\psi_r(x)\nu\left(z\right)\,dz\,dx\\
&=:I_{111}^{\e}(\w)+I_{112}^{\e,R}(\w),
\end{align*}
where $\chi_R(s):=s-\tilde \rho_R(s)$ and we used the co-cycle property \eqref{t2-1-1a}.
Then, using the ergodic theorem and the fact that $\displaystyle\int_{\R^d} \psi_r(x)\,dx=1$, we have
\begin{align*}
\lim_{\e \to 0}I_{111}^{\e}(\w)=\frac{1}{2}\Ee\left[\int_{\R^d}\left|\phi_k\left(z;\w\right)\right|^2\nu(z)\,dz\right].
\end{align*}

For every $M\ge 1$, define
\begin{align*}
O_{M,\e}(\w):=\left\{(x,z)\in \R^d\times \R^d: \left|\phi_k\left(z;\tau_{\frac{x}{\e}}\w\right)\right|\ge M\right\}.
\end{align*}
Note that $|\chi_R(a)-\chi_R(b)|\le 2|b-a|$, then
for every $R\ge 1$ and $\delta\in (0,1)$,
\begin{equation}\label{l1-4-5}
\begin{split}
|I_{112}^{\e,R}(\w)|\le& c_3\int_{B(0,r)}\int_{\{|z|\le \delta\}}
\left|\phi_k\left(z;\tau_{\frac{x}{\e}}\w\right)\right|^2\nu(z)\,dz\,dx\\
&+c_3\int\int_{((B(0,r)\cap A_{R/2,\e,1}(\w))\times B(0,\delta)^c)\cap O_{R/2,\e}(\w)^c}\left|\phi_k\left(z;\tau_{\frac{x}{\e}}\w\right)\right|^2\nu(z)\,dz\,dx\\
&+c_3\int\int_{(B(0,r)\times \R^d )\cap O_{R/2,\e}(\w)}\left|\phi_k\left(z;\tau_{\frac{x}{\e}}\w\right)\right|^2\nu(z)\,dz\,dx\\
=&:c_3\left(I_{1121}^{\e,\delta}(\w)+I_{1122}^{\e,R,\delta}(\w)+I_{1123}^{\e,R}(\w)\right).
\end{split}
\end{equation}
Here we used the property that
\begin{align*}
\chi_R\left(\e\phi_k\left(\frac{x}{\e}+z;\w\right)\right)-\chi_R\left(\e\phi_k\left(\frac{x}{\e};\w\right)\right)=0,
\quad (x,z)\in \left(A_{R/2,\e,1}(\w)^c \times \R^d\right)\cap O_{R/2,\e}(\w)^c,
\end{align*}
which follows from the fact that $\chi_R(a)-\chi_R(b)=0$ for every $a,b\in \R$ with $|a|\le R/2$ and
$|a-b|\le R/2$.

Using \eqref{t2-1-2} and the ergodic theorem directly, we get
\begin{align*}
\lim_{\delta \to 0}\lim_{\e \to 0}I_{1121}^{\e,\delta}(\w)&=
|B(0,r)| \lim_{\delta \to 0}\Ee\left[\int_{\{|z|\le \delta\}}\left|\phi_k\left(z;\w\right)\right|^2\nu(z)\,dz\right]=0
\end{align*} and
\begin{align*}
\lim_{R \to \infty}\lim_{\e \to 0}I_{1123}^{\e,R}(\w)=
|B(0,r)| \lim_{R\to \infty}\Ee\left[\int_{\R^d}\left|\phi_k\left(z;\w\right)\right|^2
\I_{\{|\phi_k(\cdot;\w)|\ge R/2\}}(z)\nu(z)\,dz\right]=0.
\end{align*}
By the definition of $O_{M,\e}(\w)$, we obtain that for every $\delta\in (0,1)$ and $R\ge 1$,
\begin{align*}
\varlimsup_{\e \to 0}|I_{1122}^{\e,R,\delta}(\w)|&\le \frac{R^2}{4}\left(\lim_{\e \to 0}\left|B(0,r)\cap A_{R/2,\e,1}(\w)\right|\right)\left(\int_{\{|z|>\delta\}}\nu(z)\,dz\right)\\
&\le c_4(\delta,R)\varlimsup_{\e \to 0}
\left(\int_{B(0,r)}|\phi_k^\e(x;\w)|^2\,dx\right)^{1/2}=0,
\end{align*}
where we have used \eqref{l1-4-3} and \eqref{l1-3-1}. Hence, firstly letting $\e \to 0$, and then taking $R \to \infty$ and $\delta \to 0$, $$
\lim_{R \to \infty}\lim_{\varepsilon \to 0}|I_{112}^{\e,R}(\w)|=0.$$
Therefore,
\begin{align*}
\lim_{R \to \infty}\lim_{\e \to 0}I_{11}^{\e,R}(\w)=\frac{1}{2}\Ee\left[\int_{\R^d}|\phi_k(z;\w)|^2\nu(z)\,dz\right].
\end{align*}

By the change of variable and the mean value theorem, we have
\begin{align*}
|I_{12}^{\e,R}(\w)|
&=\frac{\e^{-1}}{2}
\left|\int_{\R^d}\int_{\R^d}\left(\phi_k\left(\frac{x}{\e}+z;\w\right)-\phi_k\left(\frac{x}{\e};\w\right)\right)
\left(\psi_r(x+\e z)-\psi_r(x)\right)\tilde \rho_R\left(\phi_k^\e(x;\w)\right)\nu(z)\,dz\,dx\right|\\
&\le \|\nabla \psi_r\|_{L^\infty(\R^d)}\int_{B(0,K_0r)}
\left(\int_{\R^d}\left|\phi_k\left(z;\tau_{\frac{x}{\e}}\w\right)\right||z|\nu(z)\,dz\right)\cdot |\phi_k^\e(x;\w)|\,dx\\
&\quad+c_5\e^{-1}R\int_{B(0,K_0r)^c}\left|\phi_k\left(\frac{x}{\e};\w\right)\right|
\left(\int_{B\left(\frac{x}{\e},\frac{r}{\e}\right)}\nu(z)\,dz\right)\,dx\\
&\quad+c_5\e^{-1}R\int_{B(0,K_0r)^c}\left(\int_{B\left(0,\frac{r}{\e}\right)}|\phi_k(z;\w)|^2\,dz\right)^{1/2}
\left(\int_{B\left(\frac{x}{\e},\frac{r}{\e}\right)}\nu(z)^2\,dz\right)^{1/2}\,dx\\
&=:I_{121}^{\e,R}(\w)+I_{122}^{\e,R}(\w)+I_{123}^{\e,R}(\w),
\end{align*}
where in the inequality above we used the facts that $|\tilde \rho_R(s)|\le 2R$ and $|\tilde \rho_R(s)|\le 2s$ for all $s\in \R$, and $K_0>1$ is the constant in Assumption \ref{a1-2--}(iii).

By the H\"older inequality and the ergodic theorem,
\begin{align*}
\varlimsup_{\e \to 0}|I_{121}^{\e,R}(\w)|&\le
c_6
\left(\lim_{\e \to 0}\int_{B(0,K_0r)}\int_{\R^d}\left|\phi_k\left(z;\tau_{\frac{x}{\e}}\w\right)\right|^2\nu(z)\,dz\,dx\right)^{1/2}
\left(\lim_{\e \to 0}\int_{B(0,K_0r)}|\phi_k^\e(x;\w)|^2\,dx\right)^{1/2}\\
&=c_6|B(0,K_0r)|^{1/2}\Ee\left[\int_{\R^d}|\phi_k(z;\w)|^2\nu(z)\,dz\right]^{1/2}
\left(\lim_{\e \to 0}\int_{B(0,K_0r)}|\phi_k^\e(x;\w)|^2\,dx\right)^{1/2}=0,
\end{align*}
where in the inequality we used the fact $\displaystyle\int_{\R^d}|z|^2\nu(z)\,dz<\infty$, and the last equality is due to \eqref{t2-1-2} and \eqref{l1-3-1}.

According to \eqref{t2-1-2a}, \eqref{a1-2-1}, \eqref{t2-1-2} and the H\"older inequality, we derive that for every fixed $R\ge 1$,
\begin{align*}
\varlimsup_{\e \to 0}|I_{122}^{\e,R}(\w)|&\le
c_7r^d R\varlimsup_{\e \to 0}\e^{-d-1}\int_{B(0,K_0r)^c}\left|\phi_k\left(\frac{x}{\e};\w\right)\right|\gamma\left(\frac{x}{\e}\right)\nu\left(\frac{x}{\e}\right)dx\\
&\le c_8(r,R)\left(\int_{\R^d}\left|\phi_k\left(x;\w\right)\right|^2\nu(x)\,dx\right)^{1/2}
\left(\varlimsup_{\e \to 0}\e^{-2}\int_{B\left(0,\frac{K_0r}{\e}\right)^c}\gamma^2(x)\nu(x)\,dx\right)^{1/2}=0
\end{align*}
where in the last inequality we used
\begin{align*}
\int_{B\left(0,\frac{K_0r}{\e}\right)^c}\gamma^2(x)\nu(x)\,dx&\le \frac{\e^2}{(K_0r)^2}
\left(\int_{B\left(0,\frac{K_0r}{\e}\right)^c}|x|^2\gamma^2(x)\nu(x)\,dx\right).
\end{align*}

Using \eqref{t2-1-2a}, \eqref{l1-3-1} and \eqref{t2-1-2} again, we find that
\begin{align*}
 \varlimsup_{\e \to 0}|I_{123}^{\e,R}(\w)|
&\le c_9Rr^{d/2}
\left(\lim_{\e \to 0}\int_{B\left(0,r\right)}|\phi_k^\e(x;\w)|^2\,dx\right)^{1/2}\\
&\quad\times
\left(\varlimsup_{\e \to 0}\e^{-2}\left(\int_{B\left(0,\frac{K_0r}{\e}\right)^c}\gamma^2(x)\nu(x)\,dx\right)^{1/2}
\left(\int_{B\left(0,\frac{K_0r}{\e}\right)^c}\nu(x)\,dx\right)^{1/2}\right)\\
&\le c_{10}(r,R) \left(\lim_{\e \to 0}\int_{B\left(0,r\right)}|\phi_k^\e(x;\w)|^2\,dx\right)^{1/2}\\
&\quad\times
\varlimsup_{\e \to 0}\left[\left(\int_{B\left(0,\frac{K_0r}{\e}\right)^c}|x|^2\gamma^2(x)\nu(x)\,dx\right)^{1/2}
\left(\int_{B\left(0,\frac{K_0r}{\e}\right)^c}|x|^2\nu(x)\,dx\right)^{1/2}\right]\\
&=0.
\end{align*}

Combining with all the estimates above yields that for every fixed $R\ge 1$,
\begin{align*}
\lim_{\e \to 0}I_{12}^{\e,R}(\w)=0
\end{align*} and so
\begin{align*}
\lim_{R\to \infty }
\lim_{\e \to 0}I_{1}^{\e, R}(\w)=\frac{1}{2}\Ee\left[\int_{\R^d}\left|\phi_k\left(z;\w\right)\right|^2\nu(z)\,dz\right].
\end{align*}

(3) Now, we consider $I_2^{\e, R}(\w)$. According to $b_k(x;\w)=-\sum_{j=1}^d \frac{\partial H_{kj}(x;\w)}{\partial x_j}$ and the integration by parts formula, we find
\begin{equation}\label{l1-4-4}
\begin{split}
I_2^{\e,R}(\w)&=-\sum_{j,l=1}^d \int_{\R^d}\frac{\partial}{\partial x_j}\left(H_{lj}\left(\frac{\cdot}{\e};\w\right)\right)(x)
\frac{\partial \phi_k^\e(x;\w)}{\partial x_l}\tilde \rho_R\left(\phi_k^\e(x;\w)\right)\psi_r(x)\,dx\\
&= \sum_{j,l=1}^d \int_{\R^d}H_{lj}\left(\frac{x}{\e};\w\right)\tilde \rho_R\left(\phi_k^\e(x;\w)\right)\frac{\partial \phi_k^\e(x;\w)}{\partial x_l}
\frac{\partial \psi_r(x)}{\partial x_j}\,dx,
\end{split}
\end{equation}
where in the second equality we used the anti-symmetry of $\{H_{lj}\}_{1\le l,j\le d}$.
Furthermore,  it holds that
\begin{align*}
|I_2^{\e,R}(\w)|&\le c_{11}\sum_{j,l=1}^d\int_{B(0,r)\cap A_{M,\e,2}(\w)^c}\left|H_{lj}\left(\frac{x}{\e};\w\right)\right|
\left|\tilde \rho_R\left(\phi_k^\e(x;\w)\right)\right|\left|\frac{\partial \phi_k^\e(x;\w)}{\partial x_l}\right|\,dx\\
&\quad +c_{11}\sum_{j,l=1}^d\int_{B(0,r)\cap A_{M,\e,2}(\w)}\left|H_{lj}\left(\frac{x}{\e};\w\right)\right|
\left|\tilde \rho_R\left(\phi_k^\e(x;\w)\right)\right|\left|\frac{\partial \phi_k^\e(x;\w)}{\partial x_l}\right|\,dx\\
&=:c_{11}\left(I_{21}^{\e,R,M}(\w)+I_{22}^{\e,R,M}(\w)\right).
\end{align*}
By the definition of $A_{M,\e,2}(\w)$, we deduce that for every fixed $M\ge 1$,
\begin{align*}
\varlimsup_{\e \to 0}I_{21}^{\e,R,M}(\w)&\le c_{12}M\sum_{j,l=1}^d
\left(\lim_{\e \to 0}\int_{B(0,r)}\left|H_{lj}\left(\frac{x}{\e};\w\right)\right|^2\,dx\right)^{1/2}
\left(\lim_{\e \to 0}\int_{B(0,r)}|\phi_k^\e(x;\w)|^2\,dx\right)^{1/2}\\
&=c_{12}M|B(0,r)|^{1/2}\left(\sum_{j,l=1}^d \Ee[|\tilde H_{lj}|^2]^{1/2}\right)
\left(\lim_{\e \to 0}\int_{B(0,r)}|\phi_k^\e(x;\w)|^2\,dx\right)^{1/2}=0,
\end{align*}
where the last equality follows from \eqref{l1-3-1} again.
Note that $|\rho(s)|\le 2R$ for all $s\in \R$ and $\tilde \Phi_k(\tau_x \w)=\nabla \phi_k(x;\w)$, then
\begin{align*}
\varlimsup_{M \to \infty}\varlimsup_{\e \to 0}I_{22}^{\e,R,M}(\w)&\le c_{13}R\sum_{j,l=1}^d
\varlimsup_{M \to \infty}\lim_{\e \to 0}
\int_{B(0,r)}\left|H_{lj}\left(\frac{x}{\e};\w\right)\right|\left|\tilde \Phi_{kl}(\tau_{\frac{x}{\e}}\w)\right|
\I_{\tilde \Phi(\tau_{\frac{\cdot}{\e}}\w)\ge M}(x)\,dx\\
&=c_{13}R|B(0,r)|\left(\varlimsup_{M \to \infty}\Ee[|\tilde H_{lj}\tilde \Phi_{kl}(\w)|\I_{\{|\tilde \Phi_k|\ge M\}}(\w)
]\right)=0,
\end{align*}
where the last equality follows from \eqref{e:aabb}.
Putting all the estimates above together, and first letting $\e \to 0$ and then $M \to \infty$, we can show that for
every fixed $R\ge 1$,
\begin{align*}
\lim_{\e \to 0}I_2^{\e,R}(\w)=0.
\end{align*}

According to all the limit properties of $I_1^{\e,R}(\w)$, $I_2^{\e,R}(\w)$ and
$I_3^{\e,R}(\w)$, we get the desired conclusion \eqref{l1-4-1}.
\end{proof}

\begin{remark}\label{r1-1}
According to the proof above, we can obtain that under the assumptions of Lemma \ref{l1-4},
for every $1\le k,l\le d$,
\begin{equation}\label{r1-1-1}
\sum_{j=1}^d \Ee[(\tilde \Phi_{kj}(\w)+\tilde \Phi_{lj}(\w))
(\tilde H_{kj}(\w)+\tilde H_{lj}(\w))]=
-\frac{1}{2}\Ee\left[\int_{\R^d}|\phi_k(z;\w)+\phi_l(z;\w)|^2\nu(z)\,dz\right].
\end{equation}
We emphasize  the crucial step to prove \eqref{r1-1-1} is that one can still make the cancelation
in \eqref{l1-4-4} with $\phi_k^\e$ replaced by $\phi_k^\e+\phi_l^\e$.
\end{remark}

\section{Proof of Theorem \ref{t1-2}}

\begin{proof}[Proof of Theorem $\ref{t1-2}$] The proof is split into three steps.

(1) For every $r\ge 2$ and $\theta\in (0,1)$, define
$B_{\theta}(0,r)=\{y\in B(0,r):|y-\partial B(0,r)|>\theta\}$.
According to \eqref{t1-1-1a}, it holds that
\begin{align}\label{t1-2-3}
\int_{B(0,r)\backslash B_{\theta}(0,r)}|u^\e(x;\w)|\,dx\le
\|u^\e(\cdot;\w)\|_{L^\infty(\R^d)}\left|B(0,r)\backslash B_{\theta}(0,r)\right|
\le c_1(r)\sup_{\e\in (0,1)}\|u^\e(\cdot;\w)\|_{L^\infty(\R^d)}\theta.
\end{align}

For every $y\in \R^d$ with $|y|\le \theta$, letting $N_0^\e:=\left[\frac{2\theta}{\e}\right]+1$,
we have
\begin{align*}
\int_{B_{\theta}(0,r)}\left|u^\e(x+y;\w)-u^\e(x;\w)\right|\,dx&
\le \sum_{k=0}^{N_0^\e-1} \int_{B_{\theta}(0,r)}\left|u^\e\left(x+\frac{(k+1)y}{N_0^\e};\w\right)-u^\e\left(x+\frac{ky}{N_0^\e};\w\right)\right|\,dx\\
&=:\sum_{k=0}^{N_0^\e-1} J_k^\e(y;\w).
\end{align*}
Let $y_k:=\frac{ky}{N_0^\e}$ for $0\le k \le N_0^\e-1$. Then, for every $0\le k \le N_0^\e-1$,
\begin{align*}
J_k^\e(y;\w)&=\left|B\left(0,\e/2\right)\right|^{-d}\int_{B_{\theta}(0,r)}
\left(\int_{B\left(x+y_k,\e/2\right)}\left|u^\e(x+y_{k+1};\w)-u^\e(x+y_k;\w)\right|dz\right)\,dx\\
&\le c_2\e^{-d}\Bigg(\int_{B_{\theta}(0,r)}\int_{B\left(x+y_k,\e/2\right)}\left|u^\e(x+y_{k+1};\w)-u^\e(z;\w)\right|\,dz\,dx\\
&\quad\quad\quad\quad +\int_{B_{\theta}(0,r)}\int_{B\left(x+y_k,\e/2\right)}\left|u^\e(x+y_k;\w)-u^\e(z;\w)\right|\,dz\,dx\Bigg)\\
&=:c_2\e^{-d}\left(J_{k,1}^\e(y;\w)+J_{k,2}^\e(y;\w)\right).
\end{align*}
Since  $|y_k-y_{k+1}|\le \e/2$ and $|y_k|\le |y|\le \theta\le r$, it holds that
\begin{align*}
J_{k,1}^\e(y;\w)&\le \int_{B_{\theta}(0,r)}\int_{B\left(0,\e\right)}\left|u^\e(x+y_{k+1};\w)-u^\e(x+y_{k+1}+z;\w)\right|\,dz\,dx\\
&\le \int_{B(0,2r)}\left(\int_{\{|z|\le \e\}}
\frac{\left|u^\e(x+z;\w)-u^\e(x;\w)\right|^2}{|z|^{d+\alpha}}\,dz\right)^{1/2}
\left(\int_{\{|z|\le \e\}}|z|^{d+\alpha}\,dz\right)^{1/2}\,dx\\
&\le c_3(r)\e^{d+\alpha/2}\left(\int_{B(0,2r)}\int_{\{|z|\le \e\}}
\frac{\left|u^\e(x+z;\w)-u^\e(x;\w)\right|^2}{|z|^{d+\alpha}}\,dz\,dx\right)^{1/2}.
\end{align*}
By  \eqref{t1-1-0a} and the definition of $\nu(z)$,
\begin{align*}
\sup_{\e\in (0,1)}\e^{-(2-\alpha)}\left(\int_{B(0,2r)}\int_{\{|z|\le \e\}}
\frac{\left|u^\e(x+z;\w)-u^\e(x;\w)\right|^2}{|z|^{d+\alpha}}\,dz\,dx\right)<\infty,
\end{align*}
which implies immediately that $J_{k,1}^\e(y;\w)\le c_4(r)\e^{d+1}$. Following the same argument, we can find that $J_{k,2}^\e(y;\w)\le c_5(r)\e^{d+1}$ for every $0\le k\le N_0^\e-1$.
Hence, we can obtain that for every $y\in \R^d$ with $|y|\le \theta$,
\begin{align*}
\sup_{\e\in (0,1)}\int_{B_{\theta}(0,r)}\left|u^\e(x+y;\w)-u^\e(x;\w)\right|\,dx\le
\sup_{\e\in (0,1)}\left(\sum_{k=0}^{N_0^\e-1} J_k^\e(y;\w)\right)\le c_6(r)\theta.
\end{align*}

By this and \eqref{t1-2-3}, we can apply \cite[Theorem 1.95, p.\ 37]{DD}
to conclude that $\{u^\e(\cdot;\w): \e\in (0,1]\}$ is
precompact as $\e \to 0$ in $L^1 (B(0,R);dx)$ for all $R\ge1$. So,
we can find a subsequence $\{u^{\e_m}(\cdot;\w)\}_{m\ge 1}$ and $u_0(\cdot;\w)$
such that
\begin{equation*}
\lim_{m \to \infty}\int_{B(0,R)}|u^{\e_m}(x;\w)-u_0(x;\w)| \,dx=0,\quad R\ge 1.
\end{equation*}
Combining this with \eqref{t1-1-1a} yields that for every $p>0$,
\begin{equation}\label{t1-2-7}
\lim_{m \to \infty}\int_{B(0,R)}|u^{\e_m}(x;\w)-u_0(x;\w)|^p\, dx=0,\quad R\ge 1.
\end{equation}
Thus it remains to prove that, for every convergent subsequence $\{u^{\e_m}\}_{m\ge 1}$ in $L_{loc}^p(\R^d)$, the limit
$u_0(\cdot;\w)$ is the solution to the equation \eqref{t1-2-2}.

(2) For every $\e\in (0,1)$, let $\phi^\e(x;\w):=\e\phi\left(\frac{x}{\e};\w\right)$.
Given any $f\in C_c^\infty(\R^d)$, we take the test function
\begin{equation}\label{e:test}f_{\e_m}(x;\w):=f(x)+\sum_{j=1}^d\phi_j^{\e_m}\left(x;\w\right)\frac{\partial f(x)}{\partial x_j}\end{equation}(which belongs to
$W^{1,q}(\R^d)$ with compact supports by Lemma \ref{l1-5}) in \eqref{t1-1-0}, and derive that
\begin{equation}\label{t1-2-8}
\begin{split}
&\lambda\int_{\R^d}u^{\e_m}(x;\w)f_{\e_m}(x;\w)\,dx-
\int_{\R^d}u^{\e_m}(x;\w)L_0^{\e_m}f(x)(x)\,dx+\e_m^{-1}\int_{\R^d}\left\langle b\left(\frac{x}{\e_m};\w\right), \nabla f_{\e_m}(x;\w)\right\rangle u^{\e_m}(x;\w)\,dx\\
&+\frac{\e_m^{-d-2}}{2}\sum_{j=1}^d\int_{\R^d}\int_{\R^d}\left(\phi_j^{\e_m}\left(x+z;\w\right)\frac{\partial f(x+z)}{\partial x_j}- \phi_j^{\e_m}\left(x;\w\right)\frac{\partial f(x)}{\partial x_j}\right)
(u^{\e_m}(x+z;\w)-u^{\e_m}(x;\w))\nu\left(\frac{z}{\e_m}\right)\,dz\,dx\\
&=\int_{\R^d}h(x)f_{\e_m}(x;\w)\,dx.
\end{split}
\end{equation}
Note that $f\in C_c^\infty(\R^d)$. By \eqref{l1-3-1} and \eqref{t1-2-7}, we immediately get that
\begin{align*}
\lim_{m \to \infty}\int_{\R^d}u^{\e_m}(x;\w)f_{\e_m}(x;\w)\,dx=\int_{\R^d}u_0(x;\w)f(x)\,dx
\end{align*} and
$$\lim_{m\to\infty}\int_{\R^d}h(x)f_{\e_m}(x;\w)\,dx=\int_{\R^d} h(x)f(x)\,dx.$$

Below we will estimate other terms involved in the left hand side of \eqref{t1-2-8}. We write for every $M\ge 1$ that
\begin{align*}
L_0^{\e_m}f(x)&=\e_m^{-2}\left(\int_{\{|z|\le M\}}+\int_{\{|z|>M\}}\right)\left(f(x+\e_m z)-f(x)-\e_m\langle \nabla f(x), z \rangle \right)\nu(z)\,dz\\
&=:\frac{1}{2}\int_{\{|z|\le M\}}\left\langle \nabla^2 f(x), z\otimes z\right\rangle \nu(z)\,dz+G^{m}_{1,M}(x).
\end{align*}
Below without loss of generality we suppose that ${\rm supp}[f]\subset B(0,R_0)$ for some $R_0>1$. Let $K_0\ge2$ be the constant in Assumption \ref{a1-2--}(iii). Then, for all $x\in B(0,K_0R_0)$,  by Taylor's expansion,
\begin{align*}
|G_{1,M}^m(x)|\le  \frac{\e_m\|\nabla^3 f\|_{L^\infty(\R^d)}}{6}\int_{\{|z|\le M\}}|z|^3\nu(z)\,dz+
\frac{\|\nabla^2 f\|_{L^\infty(\R^d)}}{2}\int_{\{|z|>M\}}|z|^2\nu(z)\,dz;
\end{align*}
while for all $x\in B(0,K_0R_0)^c$, by \eqref{t2-1-2a},
\begin{align*}
|G_{1,M}^m(x)|&\le \e_m^{-2}\|f\|_{L^\infty(\R^d)} \int_{B\left(\frac{x}{\e_m},\frac{R_0}{\e_m}\right)}\nu(z)\,dz\le c_7(R_0)\e_m^{-d-2}\gamma\left(\frac{x}{\e_m}\right)\nu\left(\frac{x}{\e_m}\right),
\end{align*}
which  along with the H\"older inequality and \eqref{a1-2-1} implies that
\begin{align}\label{t1-2-9a}
\int_{\{|x|>K_0R_0\}}|G_{1,M}^m(x)|\,dx
&\le c_8(R_0)
\left(\int_{\{|z|>\frac{R_0}{\e_m}\}}|z|^2\gamma^2(z)\nu(z)\,dz\right)^{1/2}\left(\int_{\{|z|>\frac{R_0}{\e_m}\}}\nu(z)\,dz\right)^{1/2}\to 0
\end{align} as $m\to \infty$.
Hence, combining all the estimates above  with \eqref{t1-1-1a} and \eqref{t1-2-7}, and firstly letting $m\to \infty$ and then $M\to \infty$, we get
\begin{align}\label{t1-2-9}
\lim_{m \to \infty}-\int_{\R^d}L_0^{\e_m}f(x)u^{\e}(x;\w)\,dx=-
\frac{1}{2}\int_{\R^d}\left\langle \nabla^2 f(x), \int_{\R^d}(z\otimes z) \nu(z)\,
dz\right\rangle u_0(x;\w)\,dx.
\end{align}

On the other hand, it holds that
\begin{align*}
&\frac{\e_m^{-d-2}}{2}\int_{\R^d}\int_{\R^d}\left(\phi_j^{\e_m}\left(x+z;\w\right)\frac{\partial f(x+z)}{\partial x_j}-
\phi_j^{\e_m}\left(x;\w\right)\frac{\partial f(x)}{\partial x_j}\right)
\left(u^{\e_m}(x+z;\w)-u^{\e_m}(x;\w)\right)
\nu\left(\frac{z}{\e_m}\right)
\,dz\,dx\\
&=\frac{\e_m^{-d-2}}{2}\int_{\R^d}\int_{\R^d}\left(\phi_j^{\e_m}\left(x+z;\w\right)-\phi_j^{\e_m}\left(x;\w\right)\right)
\left(u^{\e_m}(x+z;\w)\frac{\partial f(x+z)}{\partial x_j}-u^{\e_m}(x;\w)\frac{\partial f(x)}{\partial x_j}\right)\nu\left(\frac{z}{\e_m}\right)\,dz\,dx\\
&\quad -\e_m^{-d-2}\int_{\R^d}\int_{\R^d}\left(\frac{\partial f(x+z)}{\partial x_j}-\frac{\partial f(x)}{\partial x_j}\right)
\left(\phi_j^{\e_m}\left(x+z;\w\right)-\phi_j^{\e_m}\left(x;\w\right)\right)
\nu\left(\frac{z}{\e_m}\right)u^{\e_m}(x;\w)\,dz\,dx\\
&\quad-\int_{\R^d}L_0^{\e_m}\left(\frac{\partial f(\cdot)}{\partial x_j}\right)(x)\phi_j^{\e_m}\left(x;\w\right)u^{\e_m}(x;\w)\,dx\\
&=:I_{21}^{m,j}(\w)+I_{22}^{m,j}(\w)+I_{23}^{m,j}(\w).
\end{align*}
By the change of variable and the definition of $\phi^\e$, we have
\begin{align*}
I_{21}^{m,j}(\w)&=\frac{\e_m^{-1}}{2}
\int_{\R^d}\int_{\R^d}
\left(\phi_j\left(\frac{x}{\e_m}+z;\w\right)-\phi_j\left(\frac{x}{\e_m};\w\right)\right)\\
&\quad\quad\quad\quad\quad\quad\quad\times \left(u^{\e_m}(x+\e_m z;\w)\frac{\partial f(x+\e_m z)}{\partial x_j}-u^{\e_m}(x;\w)\frac{\partial f(x)}{\partial x_j}\right)\nu(z)\,dz\,dx
\end{align*} and
\begin{align*}
I_{22}^{m,j}(\w)=-
\e_m^{-1}
\int_{\R^d} \int_{\R^d}\left(\phi_j\left(\frac{x}{\e_m}+z;\w\right)-\phi_j\left(\frac{x}{\e_m};\w\right)\right)
\left(\frac{\partial f(x+\e_m z)}{\partial x_j}-\frac{\partial f(x)}{\partial x_j}\right) \nu\left(z\right)u^{\e_m}(x;\w)\,dz \,dx.
\end{align*}
Furthermore, by Taylor's expansion, we know that for every $M\ge 1$,
\begin{align*}
&\e_m^{-1}
\int_{\R^d} \left(\phi_j\left(\frac{x}{\e_m}+z;\w\right)-\phi_j\left(\frac{x}{\e_m};\w\right)\right)
\left(\frac{\partial f(x+\e_m z)}{\partial x_j}-\frac{\partial f(x)}{\partial x_j}\right) \nu\left(z\right)\,dz\\
&=\sum_{k=1}^d\frac{\partial^2 f(x)}{\partial x_j \partial x_k}
\left(\int_{\{|z|\le M\}}\left(\phi_j\left(\frac{x}{\e_m}+z;\w\right)
-\phi_j\left(\frac{x}{\e_m};\w\right)\right)z_k\nu\left(z\right)dz\right)+G_{2,M}^{m,j}(x;\w).
\end{align*}
Here, for every $x\in B(0,K_0R_0)$,
\begin{align*}
|G_{2,M}^{m,j}(x;\w)|&\le \frac{\|\nabla^3 f\|_{L^\infty(\R^d)}M\e_m}{2}
\int_{\{|z|\le M\}}\left|\phi_j\left(\frac{x}{\e_m}+z;\w\right)-\phi_j\left(\frac{x}{\e_m};\w\right)\right||z|^2\nu(z)\,dz\\
&\quad+\|\nabla^2 f\|_{L^\infty(\R^d)}\int_{\{|z|>M\}}\left|\phi_j\left(\frac{x}{\e_m}+z;\w\right)-\phi_j\left(\frac{x}{\e_m};\w\right)\right||z|\nu(z)\,dz\\
&\le c_{8}\left(\int_{\R^d}\left|\phi_j\left(\frac{x}{\e_m}+z;\w\right)-\phi_j\left(\frac{x}{\e_m};\w\right)\right|^2\nu(z)\,dz\right)^{1/2}\\
&\quad\times \left(\e_m M\left(\int_{\{|z|\le M\}}|z|^4\nu(z)dz\right)^{1/2}+\left(\int_{\{|z|>M\}}|z|^2\nu(z)\,dz\right)^{1/2}\right),
\end{align*}
which, together with the co-cycle property \eqref{t2-1-1} and the stationary property of the transformation $\{\tau_x\}_{x\in \R^d}$, implies that
\begin{align*}
  \lim_{M \to \infty}\lim_{m \to \infty}\int_{B(0,K_0R_0)}|G_{2,M}^{m,j}(x;\w)|^2\,dx
&\le |B(0,K_0R_0)| \Ee\left[\int_{\R^d}|\phi_j(z;\w)|^2\nu(z)\,dz\right]\left(\lim_{M \to \infty}\int_{\{|z|>M\}}|z|^2\nu(z)\,dz\right)\\
&=0;
\end{align*}
while for every $x\in B(0,K_0R_0)^c$, we find by using \eqref{t2-1-2a} again that
\begin{align*}
|G_{2,M}^{m,j}(x;\w)|
&\le \e_m^{-1}\|\nabla f\|_{L^\infty(\R^d)}\Bigg[
\left(\int_{B\left(\frac{x}{\e_m},\frac{R_0}{\e_m}\right)}\nu(z)\,dz\right)\left|\phi_j\left(\frac{x}{\e_m};\w\right)\right|\\
&\qquad \qquad\qquad\qquad\quad+
\left(\int_{B\left(0,\frac{R_0}{\e_m}\right)}\left|\phi_j\left(z;\w\right)\right|^2\,dz\right)^{1/2}
\left(\int_{B\left(\frac{x}{\e_m},\frac{R_0}{\e_m}\right)}\nu(z)^2\,dz\right)^{1/2}\Bigg]\\
&\le c_{9}\e_m^{-d-1}\gamma\left(\frac{x}{\e_m}\right)\nu\left(\frac{x}{\e_m}\right)
\left(\left|\phi_j\left(\frac{x}{\e_m};\w\right)\right|+
\left(\e_m^d\int_{B\left(0,\frac{R_0}{\e_m}\right)}\left|\phi_j\left(z;\w\right)\right|^2\,dz\right)^{1/2}\right),
\end{align*}
which, along with the H\"older inequality, gives us
\begin{align*}
& \int_{\{|x|>K_0R_0\}}|G_{2,M}^{m,j}(x;\w)|\, dx\\
&
\le c_{10}\left(\int_{\{|x|>\frac{K_0R_0}{\e_m}\}}\left|\phi_j\left(x;\w\right)\right|^2\nu(x)\,dx \right)^{1/2}\left(\int_{\{|x|>\frac{K_0R_0}{\e_m}\}}|x|^2\gamma^2(x)\nu(x)\,dx\right)^{1/2}\\
&\quad+c_{10}
\left(\int_{B\left(0,R_0\right)}\e_m^2\left|\phi_j\left(\frac{x}{\e_m};\w\right)\right|^2\,dx\right)^{1/2}
\left(\int_{\{|x|>\frac{K_0R_0}{\e_m}\}}|x|^2\nu(x)\,dx\right)^{1/2} \left(\int_{\{|x|>\frac{K_0R_0}{\e_m}\}}|x|^2\gamma^2(x)\nu(x)\,dx\right)^{1/2}\\
&\to 0
\end{align*} as $m\to \infty$.
Here we used \eqref{a1-2-1} and the argument for \eqref{t1-2-9a}.

Hence, combining all the estimates above together with \eqref{t1-1-1a} and \eqref{t1-2-7} yields that
\begin{align*}
&\lim_{m \to \infty}I_{22}^{m,j}(\w)\\
&=-
\lim_{M \to \infty}\lim_{m \to \infty}
\sum_{k=1}^d\int_{\R^d} \frac{\partial^2 f(x)}{\partial x_j \partial x_k}
\left(\int_{\{|z|\le M\}}\left(\phi_j\left(\frac{x}{\e_m}+z;\w\right)-\phi_j\left(\frac{x}{\e_m};\w\right)\right)
z_k \nu\left(z\right)dz\right)u_0(x;\w)\,dx\\
&=-
\lim_{M \to \infty}\lim_{m \to \infty}
\sum_{k=1}^d\int_{\R^d} \frac{\partial^2 f(x)}{\partial x_j \partial x_k}
\left(\int_{\{|z|\le M\}} \phi_j\left(z;\tau_{\frac{x}{\e_m}}\w\right)
z_k \nu\left(z\right)dz\right)u_0(x;\w)\,dx\\
&=-
\sum_{k=1}^d\Ee\left[\int_{\R^d}\phi_j\left(z;\w\right)
z_k \nu\left(z\right)dz\right]\int_{\R^d} \frac{\partial^2 f(x)}{\partial x_j \partial x_k}
u_0(x;\w)\,dx,
\end{align*} where in the second equality we used the co-cycle property \eqref{t2-1-1a}.

Since ${\rm supp}\left[\frac{\partial f}{\partial x_j}\right]\subset B(0,R_0)$, we can easily use the mean value theorem and \eqref{t2-1-2a} to get that
\begin{align*}
\left|L_0^{\e_m}\left(\frac{\partial f}{\partial x_j}\right)(x)\right|&\le c_{11}\left(\I_{B(0,K_0R_0)}(x)+\I_{B(0,K_0R_0)^c}(x)\e_m^{-d-2}\gamma\left(\frac{x}{\e_m}\right)\nu\left(\frac{x}{\e_m}\right)\right).
\end{align*}
This along with the second inequality in \eqref{t1-2-9a} yields that
\begin{align*}
 \left|I_{23}^{m,j}(\w)\right|\le c_{12}&\sup_{\e\in (0,1)}\|u^{\e}(\cdot;\w)\|_{L^\infty(\R^d)}\Bigg[
\e_m\int_{B(0,K_0R_0)}\left|\phi_j \left(\frac{x}{\e_m};\w\right)\right|\,dx\\
&+\left(\int_{\{|x|>\frac{K_0R_0}{\e_m}\}}\left|\phi_j\left(x;\w\right)\right|^2\nu(x)\,dx \right)^{1/2}\left(\int_{\{|x|>\frac{K_0R_0}{\e_m}\}}|x|^2\gamma^2(x)\nu(x)\,dx\right)^{1/2}\Bigg].
\end{align*}
Then, according to  \eqref{t2-1-2},  \eqref{l1-3-1} and \eqref{a1-2-1}, we can obtain immediately that
\begin{align*}
\lim_{m \to \infty}|I_{23}^{m,j}(\w)|=0.
\end{align*}

Therefore, putting both estimates for $I_{22}^{m,j}(\w)$ and $I_{23}^{m,j}(\w)$ together, we find that for every $1\le j \le d$,
\begin{equation}\label{t1-2-10}
\begin{split}
&\lim_{m\to \infty}\Bigg[\frac{\e_m^{-d-2}}{2}\int_{\R^d}\int_{\R^d}\left(\phi_j^{\e_m}\left(x+z;\w\right)\frac{\partial f(x+z)}{\partial x_j}- \phi_j^{\e_m}\left(x;\w\right)\frac{\partial f(x)}{\partial x_j}\right)
\left(u^{\e_m}(x+z;\w)-u^{\e_m}(x;\w)\right)\nu\left(\frac{z}{\e}\right)\,dz\,dx\\
&\quad -\frac{\e_m^{-1}}{2}
\int_{\R^d}\int_{\R^d}
\left(\phi_j\left(\frac{x}{\e_m}+z;\w\right)-\phi_j\left(\frac{x}{\e_m};\w\right)\right)
\left(u^{\e_m}(x+\e_m z;\w)\frac{\partial f(x+\e_m z)}{\partial x_j}-u^{\e_m}(x;\w)\frac{\partial f(x)}{\partial x_j}\right)\nu(z)\,dz\,dx\Bigg]\\
&=-
\sum_{k=1}^d\Ee\left[\int_{\R^d}\phi_j\left(z;\w\right)
z_k \nu\left(z\right)\,dz\right]\int_{\R^d} \frac{\partial^2 f(x)}{\partial x_j \partial x_k}
u_0(x;\w)\,dx.
\end{split}
\end{equation}

(3) In this part, we deal with the drift term in the right hand side of \eqref{t1-2-8}. It holds that
\begin{align*}
&\int_{\R^d}\e_m^{-1}\left\langle b\left(\frac{x}{\e_m};\w\right), \nabla f_{\e_m}(x;\w)\right\rangle u^{\e_m}(x;\w)\,dx\\
&=\sum_{j=1}^d\int_{\R^d}\e_m^{-1}   b_j\left(\frac{x}{\e_m};\w\right)\frac{\partial f(x)}{\partial x_j} u^{\e_m}(x;\w)\,dx +\sum_{k,j=1}^d\int_{\R^d}\e_m^{-1}
 b_k\left(\frac{x}{\e_m};\w\right)\frac{\partial \phi_j\left(\frac{x}{\e_m};\w\right)}{\partial x_k}
\frac{\partial f(x)}{\partial x_j}u^{\e_m}(x;\w)\,dx\\
&\quad+I_{3}^m(\w),
\end{align*}
where
\begin{align*}
&I_{3}^m(\w):=\sum_{k,j=1}^d\int_{\R^d}\e_m^{-1}b_k\left(\frac{x}{\e_m};\w\right)
\phi_j^{\e_m}\left(x;\w\right)\frac{\partial^2 f(x)}{\partial x_j\partial x_k}u^{\e_m}(x;\w)\,dx.
\end{align*}

Note that $\e_m^{-1}b_k\left(\frac{x}{\e_m};\w\right)=-\sum_{l=1}^d \frac{\partial}{\partial x_l}\left(H_{kl}\
\left(\frac{\cdot}{\e_m}\right)\right)(x)$. By using the integration by parts formula,
we derive
\begin{align*}
I_{3}^m(\w)& = \sum_{j,k,l=1}^d \int_{\R^d}H_{kl}\left(\frac{x}{\e_m};\w\right)
\frac{\partial \phi_j\left(\frac{x}{\e_m};\w\right)}{\partial x_l}\frac{\partial^2 f(x)}{\partial x_j\partial x_k}u^{\e_m}(x;\w)\,dx\\
&\quad+ \sum_{j,k,l=1}^d\int_{\R^d}H_{kl}\left(\frac{x}{\e_m};\w\right)\phi_j^{\e_m}(x;\w)
\frac{\partial}{\partial x_l}\left(\frac{\partial^2 f(\cdot)}{\partial x_j\partial x_k}u^{\e_m}(\cdot;\w)\right)(x)\,dx\\
&=:I_{31}^m(\w)+\sum_{j,k,l=1}^d I_{32}^{m,jkl}(\w).
\end{align*}

According to Lemma \ref{l1-5}, $\nabla \phi(x;\w)=\tilde \Phi(\tau_x \w)$ with $\tilde \Phi\in L^q(\Omega;\Pp)$ for some $q\in (1,2)$.
Since, by
\eqref{a1-3-1},
$\tilde H_{kl}\in L^{r}(\Omega;\Pp)$ for some $r>{q}/({q-1})$, it follows from \eqref{t1-2-7} and the
ergodic theorem
that
\begin{align*}
&\lim_{m \to \infty}I_{31}^m(\w)\\
&=\lim_{m \to \infty}\sum_{j,k,l=1}^d \int_{\R^d}H_{kl}\left(\frac{x}{\e_m};\w\right)
\frac{\partial \phi_j\left(\frac{x}{\e_m};\w\right)}{\partial x_l}\frac{\partial^2 f(x)}{\partial x_j\partial x_k}u_0(x;\w)\,dx\\
&= \frac{1}{2}\lim_{m \to \infty}\sum_{j,k,l=1}^d\int_{\R^d}\left(H_{kl}\left(\frac{x}{\e_m};\w\right)+H_{jl}\left(\frac{x}{\e_m};\w\right)\right)
\left(\frac{\partial \phi_k\left(\frac{x}{\e_m};\w\right)}{\partial x_l}+\frac{\partial \phi_j\left(\frac{x}{\e_m};\w\right)}{\partial x_l}\right)
\frac{\partial^2 f(x)}{\partial x_j\partial x_k}u_0(x;\w)\,dx\\
&\quad -
\lim_{m \to \infty}
\sum_{j,k,l=1}^d
\int_{\R^d}H_{kl}\left(\frac{x}{\e_m};\w\right)
\frac{\partial \phi_k\left(\frac{x}{\e_m};\w\right)}{\partial x_l}\frac{\partial^2 f(x)}{\partial x_j\partial x_k}u_0(x;\w)\,dx\\
&=\sum_{j,k=1}^d
\left(\frac{1}{2}\sum_{l=1}^d\left(\Ee\left[(\tilde \Phi_{kl}(\w)+\tilde \Phi_{jl}(\w))
(\tilde H_{kl}(\w)+\tilde H_{jl}(\w))\right]-2\Ee\left[\tilde \Phi_{kl}(\w)\tilde H_{kl}(\w)\right]\right)\right)\int_{\R^d}\frac{\partial^2 f(x)}{\partial x_j\partial x_k}u_0(x;\w)\,dx\\
&=-\frac{1}{2}\sum_{j,k=1}^d\int_{\R^d}\left(\frac{1}{2}\Ee\left[\int_{\R^d}\left(\phi_j(z;\w)+\phi_k(z;\w)\right)^2\nu(z)\,dz\right]-
\Ee\left[\int_{\R^d}|\phi_k(z;\w)|^2\nu(z)\,dz\right]\right)\frac{\partial^2 f(x)}{\partial x_j\partial x_k}u_0(x;\w)\,dx\\
&=-\frac{1}{2}\sum_{j,k=1}^d\int_{\R^d}\Ee\left[\int_{\R^d}\phi_j(z;\w)\phi_k(z;\w)\nu(z)\,dz\right]\frac{\partial^2 f(x)}{\partial x_j\partial x_k}u_0(x;\w)\,dx,
\end{align*}
where in the
fourth
equality we used \eqref{l1-4-1} and \eqref{r1-1-1}.

Let $\psi\in C_c^\infty(\R^d)$ be such that $\psi(x)=1$ for every $x\in B(0,R_0)$ and ${\rm supp}[\psi]\subset B(0,R_0+1)$. According to the
Parseval equality,
\begin{align*}
|I_{32}^{m,jkl}(\w)|&= \left|\int_{\R^d}H_{kl}\left(\frac{x}{\e_m};\w\right)\phi_j^{\e_m}(x;\w)\psi(x)
\frac{\partial}{\partial x_l}\left(\frac{\partial^2 f(\cdot)}{\partial x_j\partial x_k}u^{\e_m}(\cdot;\w)\right)(x)\,dx\right|\\
&=\left|i\int_{\R^d}\mathcal{F}\left(H_{kl}\left(\frac{\cdot}{\e_m};\w\right)\phi_j^{\e_m}(\cdot;\w)\psi(\cdot)\right)(\xi)\,\,
\mathcal{F}\left(\frac{\partial^2 f(\cdot)}{\partial x_j\partial x_k}u^{\e_m}(\cdot;\w)\right)(\xi)\,\xi_l \,d\xi\right|\\
&\le \left(\int_{\R^d}\left|\mathcal{F}\left(H_{kl}\left(\frac{\cdot}{\e_m};\w\right)\phi_j^{\e_m}(\cdot;\w)\psi(\cdot)\right)(\xi)\right|^2
\left(\I_{\{|\xi|\le \e_m^{-1}\}}+\e_m^{2-\alpha}|\xi|^{2-\alpha}\I_{\{|\xi|> \e_m^{-1}\}}\right)\,d\xi\right)^{1/2}\\
&\quad\times \left(\int_{\R^d}\left|\mathcal{F}\left(\frac{\partial^2 f(\cdot)}{\partial x_j\partial x_k}u^{\e_m}(\cdot;\w)(\xi)\right)\right|^2
\left(|\xi|^2\I_{\{|\xi|\le \e_m^{-1}\}}+\e_m^{-(2-\alpha)}|\xi|^{\alpha}\I_{\{|\xi|> \e_m^{-1}\}}\right)d\xi\right)^{1/2}\\
&=:\left(I_{321}^{m,jkl}(\w)\right)^{1/2}\times\left(I_{322}^{m,jkl}(\w)\right)^{1/2}.
\end{align*}
Here $\mathcal{F}(f)(\xi)$ denotes the Fourier transform of $f\in L^1(\R^d;dx)$.

According to \eqref{l1-1-2} and its proof, we  can verify directly that
\begin{equation}\label{t1-2-11}
\begin{split}
&\e_m^{-(2-\alpha)}\int_{\{|z|\le \e_m\}}\frac{1-e^{-i\langle \xi,z\rangle}}{|z|^{d+\alpha}}\,dz\ge c_{11}\left(|\xi|^2\I_{\{|\xi|\le \e_m^{-1}\}}+\e_m^{-(2-\alpha)}|\xi|^{\alpha}\I_{\{|\xi|> \e_m^{-1}\}}\right),\\
&\e_m^{-(2-\alpha)}\int_{\{|z|\le \e_m\}}\frac{1-e^{-i\langle \xi,z\rangle}}{|z|^{d+\alpha}}\,dz\le c_{12}\left(|\xi|^2\I_{\{|\xi|\le \e_m^{-1}\}}+\e_m^{-(2-\alpha)}|\xi|^{\alpha}\I_{\{|\xi|> \e_m^{-1}\}}\right).
\end{split}
\end{equation}
So, by \eqref{t1-2-11}, we get
\begin{align*}
I_{322}^{m,jkl}(\w)&\le c_{13}\e_m^{-(2-\alpha)}\int_{\R^d}\int_{\{|z|\le \e_m\}}\frac{\left|\frac{\partial^2 f(x+z)}{\partial x_j\partial x_k}
u^{\e_m}(x+z;\w)-\frac{\partial^2 f(x)}{\partial x_j\partial x_k}u^{\e_m}(x)\right|^2}{|z|^{d+\alpha}}\,dz\,dx\\
&\le c_{14}\|\nabla^2 f\|_{L^\infty(\R^d)}\e_m^{-(2-\alpha)}\int_{\R^d}\int_{\{|z|\le \e_m\}}
\frac{\left|u^{\e_m}(x+z;\w)-u^{\e_m}(x)\right|^2}{|z|^{d+\alpha}}\,dz\,dx\\
  &\quad +c_{14}\|u^{\e_m}(\cdot;\w)\|_{L^\infty(\R^d)}\e_m^{-(2-\alpha)}
\int_{\R^d}\int_{\{|z|\le \e_m\}}
\frac{\left|\frac{\partial^2 f(x+z)}{\partial x_j\partial x_k}-\frac{\partial^2 f(x)}{\partial x_j\partial x_k}\right|^2}{|z|^{d+\alpha}}\,dz\,dx.
\end{align*}
According to \eqref{t1-1-1a} and \eqref{t1-1-0a},
\begin{align*}
\sup_{m\ge 1}I_{322}^{m,jkl}(\w)<\infty.
\end{align*}
On the other hand, using \eqref{t1-2-11} again, we obtain
\begin{align*}
I_{321}^{m,jkl}(\w)&\le c_{15}\int_{\R^d}\left|H_{kl}\left(\frac{x}{\e_m};\w\right)\phi_j^{\e_m}(x;\w)\psi(x)\right|^2\,dx\\
&\quad+c_{15}\e_m^{2-\alpha}\int_{\R^d}\int_{\{|z|\le \e_m\}}\frac{\left|H_{kl}\left(\frac{x+z}{\e_m};\w\right)\phi_j^{\e_m}(x+z;\w)\psi(x+z)-
H_{kl}\left(\frac{x}{\e_m};\w\right)\phi_j^{\e_m}(x;\w)\psi(x)\right|^2}{|z|^{d+2-\alpha}}\,dz\,dx\\
&\le c_{16}\int_{B(0,R_0+2)}\left|H_{kl}\left(\frac{x}{\e_m};\w\right)\phi_j^{\e_m}(x;\w)\right|^2
\left(1+\e_m^{2-\alpha}\sup_{x\in \R^d}\int_{\{|z|\le \e_m\}}\frac{\left(\psi(x+z)-\psi(x)\right)^2}{|z|^{d+2-\alpha}}\,dz\right)\,dx\\
&\quad +c_{16}\e_m^{2-\alpha}\int_{B(0,R_0+2)}\left|H_{kl}\left(\frac{x}{\e_m};\w\right)\right|^2
\left(\int_{\{|z|\le \e_m\}}\frac{\left|\phi_j^{\e_m}(x+z;\w)-\phi_j^{\e_m}(x;\w)\right|^2}{|z|^{d+2-\alpha}}\,dz\right)\,dx\\
&\quad +c_{16}\e_m^{2-\alpha}\int_{B(0,R_0+2)}|\phi_j^{\e_m}(x;\w)|^2\left(\int_{\{|z|\le \e_m\}}\frac{\left(H_{kl}\left(\frac{x+z}{\e_m};\w\right)-
H_{kl}\left(\frac{x}{\e_m};\w\right)\right)^2}{|z|^{d+2-\alpha}}\,dz\right)\,dx\\
&=:I_{3211}^{m,jkl}(\w)+I_{3212}^{m,jkl}(\w)+I_{3213}^{m,jkl}(\w).
\end{align*}

Recall that $p_0=dq/(d-q)$, so $\frac{2p_0}{p_0-2}= \frac{2qd}{q(d+2)-2d}$; on the other hand, the condition $q>\frac{2d}{d+2}$ implies that
$p_0>2$.
According to \eqref{a1-3-1} and the fact $p_0'<p_0$, we have $\tilde H_{kl}\in L^{\frac{2qd}{q(d+2)-2d}}(\Omega;\Pp)$. By the H\"older inequality and \eqref{l1-3-1a},
we can show immediately that
\begin{align*}
& \lim_{m \to \infty}I_{3211}^{m,jkl}(\w)\\
&\le c_{17}
\lim_{m \to \infty}\left(\int_{B(0,R_0+2)}\left|H_{kl}\left(\frac{x}{\e_m};\w\right)\right|^{\frac{2qd}{q(d+2)-2d}}\,dx\right)^{\frac{q(d+2)-2d}{qd}}
\left(\int_{B(0,R_0+2)}\left|\phi_j^{\e_m}(x;\w)\right|^{\frac{qd}{d-q}}\,dx\right)^{\frac{2(d-q)}{qd}}=0.
\end{align*}
Similarly, also by
\eqref{a1-3-2},
\eqref{l1-3-1a} and the H\"older inequality, we derive that
\begin{align*}
\lim_{m \to \infty}I_{3213}^{m,jkl}(\w)=0.
\end{align*}

Now we turn to the estimate for $I_{3212}^{m,jkl}(\w)$. For every $p>2$, by the H\"older inequality,
\begin{equation}\label{t1-2-10a}
\begin{split}
 I_{3212}^{m,jkl}(\w)
&\le c_{16}\e_m^{2-\alpha}\left(\int_{B(0,R_0+2)}\left|H_{kl}\left(\frac{x}{\e_m};\w\right)\right|^{\frac{2p}{p-2}}\,dx\right)^{\frac{p-2}{p}}\\
&\quad \times
\left(\int_{B(0,R_0+2)}\left(\int_{\{|z|\le \e_m\}}\frac{\left|\phi_j^{\e_m}\left(x+z;\w\right)-\phi_j^{\e_m}\left(x;\w\right)\right|^2}{|z|^{d+2-\alpha}}\,dz\right)^{\frac{p}{2}}\,dx\right)^{\frac{2}{p}}.
\end{split}
\end{equation}
Furthermore, for every $\delta>1$,
\begin{align*}
&\left(\int_{B(0,R_0+2)}\left(\int_{\{|z|\le \e_m\}}\frac{\left|\phi_j^{\e_m}\left(x+z;\w\right)-\phi_j^{\e_m}\left(x;\w\right)\right|^2}{|z|^{d+2-\alpha}}\,dz\right)^{\frac{p}{2}}dx\right)^{\frac{2}{p}}\\
&\le \e_m^{\alpha}\left(\int_{B(0,R_0+2)}\left(\int_{\{|z|\le 1\}}
\frac{\left|\phi_j\left(\frac{x}{\e_m}+z;\w\right)-\phi_j\left(\frac{x}{\e_m};\w\right)\right|^p}{|z|^{d+\frac{(2-\alpha)p\delta}{2}}}\,dz\right)
\left(\int_{\{|z|\le 1\}}\frac{1}{|z|^{d-\frac{(\delta-1)(2-\alpha)p}{p-2}}}dz\right)^{\frac{p-2}{2}}dx\right)^{\frac{2}{p}}\\
&\le c_{18}\e_m^{\alpha}\left(\int_{B(0,R_0+2)}\int_{\{|z|\le 1\}}
\frac{\left|\phi_j\left(\frac{x}{\e_m}+z;\w\right)-\phi_j\left(\frac{x}{\e_m};\w\right)\right|^p}{|z|^{d+\frac{(2-\alpha)p\delta}{2}}}\,dz\,dx\right)^{\frac{2}{p}}\\
&=c_{18}\e_m^{\alpha+\frac{2d}{p}}\left(\int_{B\left(0,\frac{R_0+2}{\e_m}\right)}\int_{\{|z|\le 1\}}
\frac{\left|\phi_j\left(x+z;\w\right)-\phi_j\left(x;\w\right)\right|^p}{|z|^{d+\frac{(2-\alpha)p\delta}{2}}}\,dz\,dx\right)^{\frac{2}{p}}.
\end{align*}
Let $\chi_m\in C_c^\infty(\R^d)$ be such that $\chi_m(x)=1$ for every
$x\in B\left(0,\frac{R_0+2}{\e_m}+1\right)$, $\chi_m(x)=0$ for every $x\in B\left(0,\frac{R_0+2}{\e_m}+2\right)^c$
and $\|\nabla \chi_m\|_{L^\infty(\R^d)}\le 2$. Then,
\begin{align*}
&\left(\int_{B\left(0,\frac{R_0+2}{\e_m}\right)}\int_{\{|z|\le 1\}}
\frac{\left|\phi_j\left(x+z;\w\right)-\phi_j\left(x;\w\right)\right|^p}{|z|^{d+\frac{(2-\alpha)p\delta}{2}}}\,dz\,dx\right)^{\frac{2}{p}}\\
&= \left(\int_{B\left(0,\frac{R_0+2}{\e_m}\right)}\int_{\{|z|\le 1\}}
\frac{\left|\phi_j\left(x+z;\w\right)\chi_m(x+z)-\phi_j\left(x;\w\right)\chi_m(x)\right|^p}{|z|^{d+\frac{(2-\alpha)p\delta}{2}}}\,dz\,dx\right)^{\frac{2}{p}}\\
&\le \|\phi_j \chi_m\|_{W^{\frac{(2-\alpha)\delta}{2},p}(\R^d)}^2.
\end{align*}
Thanks to $\alpha\in (1,2)$, we can take $\delta>1$  such that $\theta_0:=\frac{(2-\alpha)\delta}{\alpha}\in (0,1)$, and
$p_0>p_1>2$ so that
\begin{align*}
\frac{1}{p_1}=\frac{1-\theta_0}{p_0}+\frac{\theta_0}{2},
\end{align*}
where $p_0:=\frac{dq}{d-q}>2$. In particular, $\frac{(2-\alpha)\delta}{2}=(1-\theta_0)\cdot 0+\theta_0\frac{\alpha}{2}$. Below, we take $p=p_1$. By the interpolation of Besov spaces, see e.g. \cite[Theorem 6.4.5]{BJ}
or \cite[Section 2.4.1, Theorem (a)]{T},
\begin{align*}
 \|\phi_j \chi_m\|_{W^{\frac{(2-\alpha)\delta}{2},p_1}(\R^d)}^2
&\le
\|\phi_j\chi_m\|_{L^{p_0}(\R^d)}^{2(1-\theta_0)}\|\phi_j\chi_m\|_{W^{\frac{\alpha}{2},2}(\R^d)}^{2\theta_0}\\
&\le c_{16}\left(\int_{B\left(0,\frac{R_0+3}{\e_m}\right)}|\phi_j(x;\w)|^{p_0}\,dx\right)^{\frac{2(1-\theta_0)}{p_0}}\\
&\quad\times \left(\int_{B\left(0,\frac{R_0+3}{\e_m}\right)}\int_{\{|z|\le 1\}}\frac{|\phi_j(x+z;\w)-\phi_j(x;\w)|^2}{|z|^{d+\alpha}}\,dz\,dx
+\int_{B\left(0,\frac{R_0+3}{\e_m}\right)}|\phi_j(x;\w)|^{2}\,dx\right)^{\theta_0}.
\end{align*}
Here in the last inequality we used $\|\nabla \chi_m\|_{L^\infty(\R^d)}\le 2$.

Hence, according to all the estimates above, we get
\begin{align*}
&\e_m^{2-\alpha}\left(\int_{B(0,R_0+3)}\left(\int_{\{|z|\le \e_m\}}\frac{\left|\phi_j^{\e_m}\left(x+z;\w\right)-\phi_j^{\e_m}\left(x;\w\right)\right|^2}{|z|^{d+2-\alpha}}\,dz\right)^{\frac{p_1}{2}}\,dx\right)^{\frac{2}{p_1}}\\
&\le c_{17}\e_m^{2}\left(\int_{B\left(0,R_0+3 \right)}\left|\phi_j\left(\frac{x}{\e_m};\w\right)\right|^{p_0}\,dx\right)^{\frac{2(1-\theta_0)}{p_0}}\\
&\quad\times \left(\int_{B\left(0,R_0+3\right)}\int_{\{|z|\le 1\}}\frac{\left|\phi_j\left(z;\tau_{\frac{x}{\e_m}}\w\right)\right|^2}{|z|^{d+\alpha}}\,dz\,dx
+\int_{B\left(0,R_0+3\right)}\left|\phi_j\left(\frac{x}{\e_m};\w\right)\right|^{2}\,dx\right)^{\theta_0}.
\end{align*}
Putting this into \eqref{t1-2-10a} with $p=p_1$, and applying \eqref{l1-3-1a} and \eqref{t2-1-2},
as well as $2<p_1<p_0$ with $\delta>1$ close to $1$ (and then $p_1$ is close to $p_0'=\frac{2p_0\alpha}{4(\alpha-1)+p_0(2-\alpha)}$) so that $2p_1/(p_1-2)<r$, we can prove
 that
\begin{align*}
\lim_{m \to \infty}I_{3212}^{m,jkl}(\w)=0.
\end{align*}

Therefore, combining the estimate for $I_3^m(\w)$  with \eqref{t1-2-9} and \eqref{t1-2-10}
yields that
\begin{align*}
&-\int_{\R^d}u^{\e_m}(x;\w)L_0^{\e_m}f(x)(x)\,dx+\e_m^{-1}\int_{\R^d}\left\langle b\left(\frac{x}{\e_m};\w\right), \nabla f_{\e_m}(x;\w)\right\rangle u^{\e_m}(x;\w)\,dx\\
&+\frac{\e_m^{-d-2}}{2}\sum_{j=1}^d\int_{\R^d}\int_{\R^d}\left(\phi_j^{\e_m}\left(x+z;\w\right)\frac{\partial f(x+z)}{\partial x_j}- \phi_j^{\e_m}\left(x;\w\right)\frac{\partial f(x)}{\partial x_j}\right)
\left(u^{\e_m}(x+z;\w)-u^{\e_m}(x;\w)\right)\nu\left(\frac{z}{\e}\right)\,dz\,dx\\
&=J^m(\w)-\frac{1}{2}\sum_{j,k=1}^d\Ee\left[\int_{\R^d}\left(z_j+\phi_j(z;\w)\right)\left(z_k+\phi_k(z;\w)\right)\nu(z)\,dz\right]
\int_{\R^d}\frac{\partial^2 f(x)}{\partial x_j \partial x_k}u_0(x;\w)\,dx\\
&\quad+\frac{\e_m^{-1}}{2}\sum_{j=1}^d
\int_{\R^d}\int_{\R^d}
\left(\phi_j\left(\frac{x}{\e_m}+z;\w\right)-\phi_j\left(\frac{x}{\e_m};\w\right)\right)
\left(u^{\e_m}(x+\e_m z;\w)\frac{\partial f(x+\e_m z)}{\partial x_j}-u^{\e_m}(x;\w)\frac{\partial f(x)}{\partial x_j}\right)\nu(z)\,dz\,dx\\
&\quad+\sum_{k,j=1}^d\int_{\R^d}\e_m^{-1}
 b_k\left(\frac{x}{\e_m};\w\right)\frac{\partial \phi_j\left(\frac{x}{\e_m};\w\right)}{\partial x_k}
\frac{\partial f(x)}{\partial x_j}u^{\e_m}(x;\w)\,dx+\sum_{j=1}^d\int_{\R^d}\e_m^{-1} b_j\left(\frac{x}{\e_m};\w\right)\frac{\partial f(x)}{\partial x_j} u^{\e_m}(x;\w)\,dx\\
&=J^m(\w)-\frac{1}{2}\sum_{j,k=1}^d\bar a_{jk}
\int_{\R^d}\frac{\partial^2 f(x)}{\partial x_j \partial x_k}u_0(x;\w)\,dx,
\end{align*}
where $J^m(\w)$ satisfies that $\lim_{m \to \infty}J^m(\w)=0$, in the second equality we have used the anti-symmetry of $\{\tilde H_{lj}\}_{1\le l,j\le d}$,
\eqref{t2-1-1} and
the definition of $\bar A=\{\bar a_{jk}\}_{1\le j,k\le d}$.

Hence, we can take $m\to \infty$ in \eqref{t1-2-8}, and obtain that for every $f\in C_c^\infty(\R^d)$,
$$
\lambda \int_{\R^d}u_0(x;\w)f(x)\,dx-\frac{1}{2}\sum_{j,k=1}^d \int_{\R^d}\bar a_{jk}\frac{\partial^2 f(x)}{\partial x_j \partial x_k}u_0(x;\w)\,dx
=\int_{\R^d}h(x)f(x)\,dx.
$$
This implies that the limit $u_0(x;\w)$ is the unique solution to the equation \eqref{t1-2-2}. In particular, $u_0(x;
\w)=u_0(x)$ is non-random. Thus, we
prove the desired conclusion \eqref{t1-2-1}.
\end{proof}

\ \

\noindent {\bf Acknowledgements.}\,\,  The research of Xin Chen is supported by the National Natural Science Foundation of China
(No.\ 12122111).
The research
of Jian Wang is supported by the NSF of China the National Key R\&D Program of China (2022YFA1006003) and the National Natural Science
Foundation of China (Nos. 12225104 and 12531007).

\vskip 0.3truein
{\small
{\bf Xin Chen:}
   School of Mathematical Sciences, Shanghai Jiao Tong University, 200240 Shanghai, P.R. China.
   \newline Email: \texttt{chenxin217@sjtu.edu.cn}

\bigskip

{\bf Kun Yin:}
    School of Mathematical Sciences, Shanghai Jiao Tong University, 200240 Shanghai, P.R. China.
  \newline  Email: \texttt{epsilonyk@sjtu.edu.cn}

\bigskip

{\bf Jian Wang:}
 School of Mathematics and Statistics \& Key Laboratory of Analytical Mathematics and Applications (Ministry of Education) \& Fujian Provincial Key Laboratory
of Statistics and Artificial Intelligence, Fujian Normal University, 350007 Fuzhou, P.R. China.
\newline Email:\texttt{jianwang@fjnu.edu.cn}

}

\end{document}